\newcommand\blfootnote[1]{%
  \begingroup
  \renewcommand\thefootnote{}\footnote{#1}%
  \addtocounter{footnote}{-1}%
  \endgroup
}
\begin{document}
\theoremstyle{plain}
\newtheorem{thm}{Theorem}[section]
\newtheorem{lem}[thm]{Lemma}
\newtheorem{prop}[thm]{Proposition}
\newtheorem{cor}[thm]{Corollary}
\newtheorem{conj}[thm]{Conjecture}
\newtheorem{Question}[thm]{Question}

\theoremstyle{definition}
\newtheorem{deftn}[thm]{Definition}
\newtheorem{ex}[thm]{Example}

\theoremstyle{remark}
\newtheorem{rk}[thm]{Remark}

\newtheorem{Fact}{Evidence}
\newtheorem{assump}{Assumption}[section]
\renewcommand{\theassump}{\Alph{assump}}
\newtheorem{conjintro}{Conjecture}
\newtheorem{thmintro}{Theorem}

\numberwithin{equation}{section}

\newcommand{\A}{\mathcal{A}}
\newcommand{\B}{\mathcal{B}}
\newcommand{\C}{\mathcal{C}}
\newcommand{\CQ}{\mathcal{C}_Q}
\newcommand{\Cw}{\mathcal{C}_w}
\newcommand{\yjh}{\hat{y_j}}
\newcommand{\ylh}{\hat{y_1}}
\newcommand{\ynh}{\hat{y_n}}
\newcommand{\ykh}{\hat{y_k}}
\newcommand{\mjh}{\hat{\mu_j}}
\newcommand{\mkh}{\hat{\mu_k}}
\newcommand{\bmjh}{\hat{\boldsymbol{\mu}_j}}
\newcommand{\bmkh}{\hat{\boldsymbol{\mu}_k}}
\newcommand{\bmu}{\boldsymbol{\mu}}
\newcommand{\p}{\mathbb{P}}
\newcommand{\W}{{\bf W}}
\newcommand{\Aqnw}{\mathcal{A}_q (\mathfrak{n}(w))}
\newcommand{\Aqn}{\mathcal{A}_q (\mathfrak{n})}
\newcommand{\Yia}{Y_{i,a}}
\newcommand{\M}{{\bf M}}
\newcommand{\G}{{\bf G}}
\newcommand{\s}{\mathcal{S}}
\newcommand{\hs}{\mathcal{H}_{\mathcal{S}}}
\newcommand{\ps}{\mathcal{P}_{\mathcal{S}}}
\newcommand{\hw}{\mathcal{H}_{w}}
\newcommand{\pw}{\mathcal{P}_{w}}
\newcommand{\zn}{\mathbb{Z}^N}
\newcommand{\hyp}{\mathcal{H}}
\newcommand{\ds}{\Delta_{\mathcal{S}}}
\newcommand{\hgt}{\text{ht}}
\newcommand{\wt}{\text{wt}}
\newcommand{\Vect}{\text{Vect}}

\newcommand{\Pik}{P_{\text{in}(k)}}
\newcommand{\Pok}{P_{\text{out}(k)}}
\newcommand{\Pij}{P_{\text{in}(j)}}
\newcommand{\Poj}{P_{\text{out}(j)}}
\newcommand{\Pjt}{\tilde{P}_j}
\newcommand{\Plt}{\tilde{P}_l}
\newcommand{\Pkt}{\tilde{P}_k}
\newcommand{\bs}{\mathbf{s}}

\newcommand{\Address}{ \textsc{ \\ Universit\'e de Paris, Sorbonne Universit\'e, CNRS \\  Institut de Math\'ematiques de Jussieu-Paris Rive Gauche, UMR 7586, \\ F-75013 Paris \\ FRANCE} \\
 E-mail: \texttt{elie.casbi@imj-prg.fr}}

\title{\Large{ {\bf EQUIVARIANT MULTIPLICITIES OF SIMPLY-LACED TYPE FLAG MINORS}}}

 \author{ELIE CASBI}

     \date{}

\maketitle

 \begin{abstract}
Let $\mathfrak{g}$ be a finite simply-laced type simple Lie algebra. Baumann-Kamnitzer-Knutson recently defined an algebra morphism $\overline{D}$ on the coordinate ring $\mathbb{C}[N]$ related to Brion's equivariant multiplicities via the geometric Satake correspondence. This map is known to take distinguished values on the elements of the MV basis corresponding to smooth MV cycles, as well as on the elements of the dual canonical basis corresponding to Kleshchev-Ram's strongly homogeneous modules over quiver Hecke algebras. In this paper we show that when $\mathfrak{g}$ is of type $A_n$ or $D_4$, the map $\overline{D}$ takes similar distinguished values on the set of all flag minors of $\mathbb{C}[N]$, raising the question of the smoothness of the corresponding MV cycles.  We also exhibit certain relations between the values of $\overline{D}$ on flag minors belonging to the same standard seed, and we show that in any $ADE$ type these relations are preserved under cluster mutations from one standard seed to another. The proofs of these results partly rely on Kang-Kashiwara-Kim-Oh's monoidal categorification of the cluster structure of $\mathbb{C}[N]$ via representations of quiver Hecke algebras. 
 \end{abstract}

 \blfootnote{Keywords: flag minors, quiver Hecke algebras, equivariant multiplicities}

\blfootnote{MSC 16G10, 20G05}

\setcounter{tocdepth}{1}
\tableofcontents

 \section{Introduction}
 
 Let $\mathfrak{g}$ be a finite simply-laced type simple Lie algebra and let  $\mathfrak{n}$ denote the nilpotent subalgebra arising from a triangular decomposition of $\mathfrak{g}$. We consider the ring $\mathbb{C}[N]$ of regular functions on the algebraic group $N$ associated to $\mathfrak{n}$. The study of good bases of $\mathbb{C}[N]$, as well as its quantized version $\Aqn$, has been an intensively investigated topic since the works of  Kashiwara \cite{Kashicrystal} and Lusztig \cite{Lusztig} in the early 90's. Kashiwara \cite{Kashicrystal} introduced the notion of \textit{crystal} as a combinatorial model describing the structure of the irreducible finite-dimensional representations of the quantum group $U_q(\mathfrak{g})$ associated to $\mathfrak{g}$. He defined the lower global basis (resp. upper global basis) using the crystal structure on $U_q^{-}(\mathfrak{g})$ (resp. on the quantum coordinate ring $\Aqn$). Lusztig \cite{Lusztig} used certain categories of perverse sheaves on quiver varieties to define the canonical basis (resp. dual canonical basis) of $U_q^{-}(\mathfrak{g})$ (resp. $\Aqn$). Grojnowski-Lusztig \cite{GrojLus} and Kashiwara-Saito \cite{KashiSai} proved that the dual canonical basis and the upper global basis coincide. Several other remarkable bases of $\mathbb{C}[N]$ have been discovered since then, such as the dual semicanonical basis introduced by Lusztig \cite{LusztigAdv00} or the MV basis, constructed after the discovery of the geometric Satake correspondence by Mirkovi\'c-Vilonen  \cite{MV}. 
 
  Berenstein-Zelevinsky \cite{BZstring} observed that the dual canonical basis of $\mathbb{C}[N]$ had interesting multiplicative properties. This was one of the main motivations for the introduction of cluster algebras by Fomin-Zelevinsky \cite{FZ1}. These are defined as certain commutative subalgebras of the field of rational functions $\mathbb{Q}(x_1, \ldots , x_N)$ where $x_1 , \ldots , x_N$ are algebraically independent variables. They are generated by certain distinguished generators called \textit{cluster variables} that are grouped into overlapping finite sets of fixed cardinality $N$ called \textit{clusters}. The monomials involving variables of the same cluster are called  \textit{cluster monomials}. The cluster variables can be constructed from the variables $x_1, \ldots , x_N$ by performing an inductive procedure called \textit{mutation}. The initial data of this procedure consists in the $N$ independent variables $x_1, \ldots , x_N$ together with a quiver $Q$ with $N$ vertices and without any loop or $2$-cycle. Such a data is called a \textit{seed}.  For every $k \in \{1, \ldots , N\}$, one defines a new variable $x'_k$ entirely determined by the $x_j$ and $Q$, as well as a new quiver $Q'_k$. This yields a new seed, given by the variables $x_1, \ldots , x_{k-1}, x'_k, x_{k+1}, \ldots , x_N$ and the quiver $Q'_k$. One of the first key points of cluster theory is the involutivity of this procedure, i.e. mutating this new seed in the same direction $k$ transforms it back into the initial seed.  Thus one can iterate this by applying arbitrary sequences of mutations. Fomin-Zelevinsky \cite{FZ2} provided a Dynkin-type classification of the initial quivers $Q$ for which this process produces only a finite number of distinct seeds. 
  
   It was shown by Geiss-Leclerc-Schr\"oer \cite{GLS} that the coordinate ring $\mathbb{C}[N]$ associated to a simply-laced finite type Lie algebra $\mathfrak{g}$  carries the structure of a cluster algebra. Their work strongly relies on categorification techniques using representations of preprojective algebras. The mutations arise from the study of certain $T$-systems called \textit{determinantal identities} relating  unipotent minors. Geiss-Leclerc-Schr\"oer also explicitly construct a certain family of seeds called \textit{standard seeds} parametrized by the set of reduced expressions of the longest element $w_0$ of the Weyl group $W$ of $\mathfrak{g}$. The cluster variables of the standard seeds are certain special cases of unipotent minors called \textit{flag minors}. 
    
After the works of Geiss-Leclerc-Schr\"oer, other categorifications of $\mathbb{C}[N]$ were constructed, but relying on categories of different natures. Unlike the \textit{additive categorification} of \cite{GLS} via representations of preprojective algebras, a new kind of categorification called \textit{monoidal categorification} was introduced by Hernandez-Leclerc \cite{HL}. The idea is to identify a given cluster algebra $\A$ with Grothendieck ring of an artinian monoidal category $\C$ via a ring isomorphism required to send the cluster monomials of $\A$ onto classes of simple objects in $\C$. A first class of examples of such categorifications was provided in \cite{HL} for certain unipotent cells of $\mathbb{C}[N]$ associated with Coxeter elements of $W$. Concerning $\mathbb{C}[N]$ itself, it was proved by Hernandez-Leclerc \cite{HL3} that the Grothendieck ring of a certain category of finite-dimensional representations of quantum affine algebras was isomorphic to $\mathbb{C}[N]$. A monoidal categorification of $\mathbb{C}[N]$ (as well as all its unipotent cells) was then constructed in a vast series of works due to  Kang-Kashiwara-Kim-Oh \cite{KKK,KKKO3,KKKOSelecta,KKKO,KK} using the representation theory of quiver Hecke algebras. 
 
  Quiver Hecke algebras (or KLR algebras) were introduced by Khovanov-Lauda \cite{KL} and Rouquier \cite{R} in the purpose of categorifying the negative half of the quantum group $U_q(\mathfrak{g})$. They are a family of $\mathbb{Z}$-graded associative algebras indexed by $Q_{+}$. The category $R-gmod$ of all graded finite-dimensional representations of the $R(\beta), \beta \in Q_{+}$ can be given a monoidal structure. Rouquier \cite{R} and Varagnolo-Vasserot \cite{VV} proved that the set of isomorphism classes of simple objects in $R-gmod$ is in bijection with the  elements of the dual canonical basis of $\mathbb{C}[N]$. In the case of a finite type  Lie algebra $\mathfrak{g}$, these simple objects were classified by McNamara \cite{McNamarafinite} and Kleshchev-Ram \cite{KR} in terms of root partitions (or dominant words) using the combinatorics of good Lyndon words, relying on the works of Leclerc \cite{Leclerc}. This parametrization was shown to be compatible with the monoidal structure of $R-gmod$ (see \cite{Casbi2}) and turns out to be convenient for studying the determinantal modules categorifying the flag minors of $\mathbb{C}[N]$. In \cite{Casbi2}, we used recent results of Kashiwara-Kim \cite{KK} to provide an explicit description in terms of root partitions of the determinantal modules corresponding to the cluster variables of the standard seed $\s^{\mathbf{i}}$ when $\mathbf{i} \in \text{Red}(w_0)$ comes from a total ordering of an index set of simple roots. This description will be very useful in the proof of the second main result of this paper (Theorem~\ref{thminitcond}). 
  
  Kleshchev-Ram \cite{KR} also exhibited a (finite) family of distinguished irreducible representations called \textit{cuspidal representations}, satisfying several good properties. Note that the notion of cuspidality depends on a preliminary choice of total ordering of an index set of simple roots. 
  The understanding of cuspidal modules was the motivation for the construction of \textit{homogeneous representations} over simply-laced type quiver Hecke algebras  by the same authors in \cite{KRhom}. They are a (finite) remarkable family of simple finite-dimensional modules in $R-gmod$ parametrized by the so-called \textit{fully-commutative} elements of $W$. The combinatorics of fully-commutative elements of Weyl groups is very rich and has been studied for a long time by Proctor \cite{P1,P2}, Stembridge \cite{Stem} and Nakada \cite{Nakada} among others. A subfamily of homogeneous modules, called \textit{strongly homogeneous} in the terminology of \cite{KRhom} will be of particular interest for us. It was already observed by Kleshchev and Ram that the dimensions of these modules are given by the Peterson-Proctor hook formula. This formula, introduced in an unpublished work of Peterson-Proctor, was generalized by Nakada \cite{Nakada} in a purely combinatorial context as a colored hook formula. It turns out that these colored hook formulas can be conveniently interpreted in terms of strongly homogeneous representations over quiver Hecke algebras by using certain tools developped in the recent work of Baumann-Kamnitzer-Knutson \cite{BKK}. 
   More details on the structure of homogeneous representations can be found in \cite{FLee1,FLee2}. 

 Mirkovi\'c-Vilonen \cite{MV} exhibited a spectacular equivalence between the category of finite-dimensional representations of a simple simply-connected reductive group $G$ and certain categories of perverse sheaves on the affine Grassmannian $Gr_{G^{\vee}}$ associated to the Langlands dual of $G$. Moreover, the weight subspaces of the highest weights representations of $G$ are interpreted as cohomology spaces of certain sheaves on $Gr_{G^{\vee}}$. This is the \textit{geometric Satake correspondence}. Mirkovi\'c-Vilonen introduced certain interesting closed irreducible subvarieties of $Gr_{G^{\vee}}$ called \textit{MV cycles}, that give rise to interesting bases of the finite-dimensional irreducible representations of $G$ via the geometric Satake correspondence. These bases can be glued together into a basis of $\mathbb{C}[N]$ called the \textit{MV basis}, indexed by certain MV cycles called \textit{stable}. The elements of this basis can also be parametrized in a more combinatorial way using \textit{MV polytopes}. Kamnitzer \cite{KamAdv,Kam} gave an explicit description of these polytopes and showed that they carry a crystal structure in the sense of Kashiwara. These polytopes are known to be the support of certain measures called \textit{Duistermaat-Heckmann measures} introduced by Brion-Procesi \cite{BrionProc}.  
 Considering certain Fourier transforms of the DH measures, Baumann-Kamnitzer-Knutson \cite{BKK}  defined an algebra morphism 
  $$ \overline{D} : \mathbb{C}[N] \longrightarrow \mathbb{C}(\alpha_1, \ldots , \alpha_n) $$
  where $\alpha_1, \ldots , \alpha_n$ are algebraically independent variables representing  the simple roots. One of Baumann-Kamnitzer-Knutson's main results consists in interpreting the image under $\overline{D}$ of an element of the MV basis in terms of certain geometric invariants of the corresponding MV cycle and used such connection as a key step for proving a conjecture of Muthiah \cite{Mu}. These invariants, called \textit{equivariant multiplicities}, are general tools introduced by Joseph \cite{Joseph} and Rossman \cite{Rossmann} and then developped by Brion \cite{Brion}. Given an algebraic variety $X$ together with a torus $T$ acting on $X$, we consider the set $X^T$ of $T$-fixed points in $X$. For any point $p \in X^T$ and any $T$-invariant subvariety $Y \subset X$, the equivariant multiplicity of $Y$ at  $p$ is a rational function denoted $\epsilon_{p}^{T}(Y)$. Brion \cite{Brion} showed that if $p$ is non-degenerate, then $\epsilon_{p}^{T}(Y)$ is always of the form 
  $$ \epsilon_{p}^{T}(Y) = \frac{Q_{p,Y}(\beta_1, \ldots , \beta_m)}{\beta_1 \cdots \beta_m} $$
  where $\beta_1 , \ldots , \beta_m$ are the weights of the action of $T$ on the tangent space of $X$ at $p$ and $Q_{p,Y}$ is some polynomial. These equivariant multiplicities (and in particular the polynomials $Q_{p,Y}$) are difficult to compute in general. The main general property is the following:  
   \begin{equation} \label{eqintro1}
     \text{$p \in Y$ and $Y$ smooth at $p$} \quad \Rightarrow \quad \epsilon_{p}^{T}(Y) = \frac{1}{\beta_1 \cdots \beta_m}  . 
     \end{equation}
Taking $X$ to be the affine Grassmaniann and $Y$ a MV cycle, Baumann-Kamnitzer-Knutson show that the equivariant multiplicity of $Y$ at a certain point of $Y$ coincides with the image under $\overline{D}$ of the MV basis element indexed by $Y$ (see {{\cite[ Corollary 10.6]{BKK}}}). In this situation, the weights involved in the denominator of the previous equality are always positive roots (seen as linear functions in $\alpha_1, \ldots , \alpha_n$). 

 In this paper we will mostly focus on the values of $\overline{D}$ on the elements of the dual canonical basis. Indeed, Nakada's colored hook formula can be straightforwardly interpreted as follows: if $M$ is a strongly homogeneous module in $R-gmod$ and $w$ is the fully-commutative element of $W$ associated to $M$ via the construction of Kleshchev-Ram \cite{KRhom}, then the evaluation of $\overline{D}$ on the isomorphism class of $M$ is of the form 
 \begin{equation} \label{eqintro2}
 \overline{D}([M]) = \frac{1}{\prod_{\beta \in \Phi_{+}^{w}} \beta} . 
  \end{equation}
 This outlines a remarkable similarity between geometric statements requiring certain smoothness conditions of MV cycles and algebraic ones requiring the (strong) homogeneity of certain modules over quiver Hecke algebras.
  On the other hand, the study of various examples (for instance when $\mathfrak{g}$ is of type $A_3$ or $D_4$) suggests the existence of certain coincidences between (prime) strongly homogeneous modules and the determinantal modules categorifying flag minors. This provides a motivation for studying the image under $\overline{D}$ of all flag minors. We propose the following Conjecture, suggesting that $\overline{D}$ surprisingly takes distinguished values similar to~\eqref{eqintro1} and~\eqref{eqintro2} on all flag minors of $\mathbb{C}[N]$, although the corresponding objects in $R-gmod$ may not be strongly homogeneous. In particular, this raises the question of the smoothness of the  MV cycles corresponding to flag minors.
  
    \begin{conjintro} \label{weakconjintro}
       Let $\mathfrak{g}$ be a Lie algebra of finite simply-laced type and let $x$ be a flag minor in $\mathbb{C}[N]$. Then the evaluation of $\overline{D}$ on $x$ is of the form
       $$ \overline{D}(x) =  \frac{1}{\prod_{\beta \in \Phi_{+}} \beta^{n_{\beta}(x)}}  $$
      where $n_{\beta}(x)$ is a nonnegative integer for every positive root $\beta \in \Phi_{+}$. 
       \end{conjintro}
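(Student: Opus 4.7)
The plan is to combine the mutation-invariance of the relations among $\overline{D}$-values of flag minors (established later in the paper) with a direct verification on one well-chosen standard seed, thereby propagating the conjectured form to every flag minor through cluster mutations.

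First step (Reduction). Every flag minor of $\mathbb{C}[N]$ appears as a cluster variable of at least one standard seed $\mathcal{S}^{\mathbf{i}}$, and by Geiss-Leclerc-Schröer any two standard seeds are connected by finite sequences of braid moves realized as cluster mutations. Hence it suffices (a) to verify the conjecture on the flag minors of a single well-chosen seed $\mathcal{S}^{\mathbf{i}_0}$, and (b) to check that the conjectured form propagates across a single braid move between adjacent standard seeds.

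Second step (Base case). Choose $\mathbf{i}_0$ to come from a total ordering of the simple roots, so that Theorem~\ref{thminitcond} provides an explicit description of each determinantal module $M_{\mathbf{i}_0, k}$ in terms of root partitions. This description expresses each flag minor, up to lower-order terms, as a normalized product (in the dual canonical basis) of cuspidal modules; by multiplicativity of $\overline{D}$, the computation reduces to evaluating $\overline{D}$ on cuspidals. For cuspidals that are strongly homogeneous one applies~\eqref{eqintro2} directly; for the remaining cuspidals one transports the formula through the combinatorics of good Lyndon words of Leclerc and Kleshchev-Ram. A case-by-case verification then yields the conjecture on $\mathcal{S}^{\mathbf{i}_0}$.

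Third step (Propagation). Invoke the mutation-invariance of the relations among $\overline{D}$-values of flag minors in a standard seed. Combined with the cluster exchange relation $x_k x'_k = M_1 + M_2$ realized between two adjacent standard seeds, these relations determine $\overline{D}(x'_k)$ in terms of the $\overline{D}$-values of the other flag minors, all inductively assumed to be of the conjectured form. The mutation-invariant identities then ought to force the resulting rational expression to collapse to a single inverse monomial in the positive roots.

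Main obstacle. The delicate point is ensuring nonnegativity of the exponents after a braid move. Solving $\overline{D}(x_k)\,\overline{D}(x'_k) = \overline{D}(M_1) + \overline{D}(M_2)$ naively yields a sum of two inverse monomials divided by another, a form that in principle need not simplify to a single inverse monomial with nonnegative exponents. Ensuring such a collapse requires combining the geometric content of Brion's smoothness criterion~\eqref{eqintro1} (forcing the equivariant multiplicity polynomial $Q_{p,Y}$ to equal $1$ at the relevant points) with positivity properties coming from the monoidal categorification of Kang-Kashiwara-Kim-Oh. Controlling this cancellation in arbitrary $ADE$ type, beyond the cases $A_n$ and $D_4$ addressed in the main body of the paper, is likely where substantial new input is required, and explains why the statement remains conjectural for $E_6$, $E_7$, $E_8$ and $D_n$ with $n \geq 5$.
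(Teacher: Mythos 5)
The overall two-step architecture you outline — verify the conjectured form on one standard seed, then propagate across braid moves — is indeed the structure of the paper's argument. But there are several concrete gaps in how you fill in each step, and in the paper the statement is in fact only established in types $A_n$ and $D_4$, not in general, which you acknowledge but which means your Step Three cannot currently be carried out in full generality.

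On the base case. Your reduction of $\overline{D}$ on a determinantal module $L(\mu)$ to $\overline{D}$ on cuspidals via ``multiplicativity'' is not valid: $L(\mu)$ is the \emph{head} of $L(\mathbf{j}_1)\circ\cdots\circ L(\mathbf{j}_k)$, and its class in $K_0(R\text{-gmod})$ is not the product $[L(\mathbf{j}_1)]\cdots[L(\mathbf{j}_k)]$, so the multiplicativity of $\overline{D}$ gives no shortcut here. The paper instead applies Nakada's colored hook formula via Proposition~\ref{propbarD} to the determinantal modules that happen to be \emph{strongly homogeneous}. This covers all of type $A_n$ (Lemma~\ref{Dbarinit}), but already in type $D_4$ three of the twelve determinantal modules of $\s^{\mathbf{i}_{nat}}$ fail to be homogeneous, and the paper is forced to compute their entire graded characters from scratch (Proposition~\ref{qcharfrozen}) using $q$-commutation with the $L(i)$; there is no a priori reason the result collapses to a single inverse monomial, it just does upon explicit computation. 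You also misattribute the combinatorial description of the determinantal modules: it comes from Theorem~\ref{thmcasbi2}, while Theorem~\ref{thminitcond} is precisely the claim being established.

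On the propagation. You correctly identify the central difficulty — that the exchange relation gives a priori a sum of two inverse monomials — but you propose to resolve it by invoking Brion's smoothness criterion together with positivity from monoidal categorification, and this is not how the paper proceeds (nor is it clear it could: the smoothness of the MV cycles corresponding to flag minors is exactly what the paper raises as an open question, so one cannot assume it). The paper's actual mechanism is to isolate two additional \emph{algebraic} properties of the monomials $P_j = (\overline{D}(x_j))^{-1}$: Property (B), a seed-dependent system of relations $P_j P_{j_{-}} = \beta_j \prod_{l} P_l$, and Property (C), a multiplicity bound $(\beta_i;P_j) - (\beta_i;P_{j_+}) \le 1$. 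These yield Lemma~\ref{Dbarhat}, the key identity $\beta_j P_{\text{in}(j)} = \beta_{j_+} P_{\text{out}(j)}$, which combined with $\beta_k + \beta_{k+2} = \beta_{k+1}$ (Lemma~\ref{beta}) makes the two-term sum factor; Property (C) then enters crucially in Proposition~\ref{division} to guarantee the resulting fraction is genuinely a monomial. Theorem~\ref{thmpropag} proves that the full triple (A), (B), (C) is stable under mutation in all simply-laced types, so the only missing ingredient beyond $A_n$ and $D_4$ is the base-case verification of (A), (B), (C) for one seed in the remaining types — not the mechanism you sketch.
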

       
   The aim of this paper is to prove the following: 
       
        \begin{thmintro} \label{mainthmintro}
       Assume $\mathfrak{g}$ is of type $A_n, n \geq 1$ or $D_4$. Then Conjecture~\ref{weakconjintro} holds. Moreover, for any standard seed $\s^{\mathbf{i}} = ((x_1, \ldots , x_N) , Q^{\mathbf{i}})$ of $\mathbb{C}[N]$, the polynomials $P_j :=  \left(\overline{D}(x_j) \right)^{-1}$ satisfy the following relations:
         $$ P_j P_{j_{-}(\mathbf{i})} = \beta_j  \prod_{\substack{ l<j<l_{+}(\mathbf{i}) \\ i_l \cdot i_j =-1}} P_l . $$  
         \end{thmintro}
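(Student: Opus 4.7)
The strategy is to establish the two assertions in the opposite order to their statement: first produce the recursive identities on the $P_j$ as rational-function equalities in $\mathbb{C}(\alpha_1,\ldots,\alpha_n)$, valid in any $ADE$ type for every standard seed, and then bootstrap these identities in types $A_n$ and $D_4$ to prove Conjecture~\ref{weakconjintro}.

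For the recursive relations, the plan is to reduce to a check on a single well-chosen standard seed. The paper's general result that the stated identities are preserved under cluster mutations between standard seeds (valid in any $ADE$ type) allows us to propagate a verification from one seed to all others, since the graph of reduced expressions of $w_0$ under braid moves is connected by the Matsumoto-Tits theorem. For the initial seed, we select a reduced expression $\mathbf{i}$ arising from a total ordering of a set of simple roots, as in \cite{Casbi2}, where the determinantal modules categorifying the flag minors $x_j$ of $\s^{\mathbf{i}}$ admit an explicit description in terms of root partitions. Combining Kang-Kashiwara-Kim-Oh's monoidal categorification of $\mathbb{C}[N]$ with the fact that $\overline{D}$ is a $\mathbb{C}$-algebra morphism, the standard $T$-system relating $x_j$, $x_{j_-(\mathbf{i})}$, and the neighbouring $x_l$'s is transformed, after application of $\overline{D}$ and rearrangement, into the asserted identity on the $P_j$; the prefactor $\beta_j$ emerges as the equivariant weight attached to position $j$ of the root partition. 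This will be the content of Theorem~\ref{thminitcond}.

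To deduce Conjecture~\ref{weakconjintro} in types $A_n$ and $D_4$, I use the fact that every flag minor belongs to some standard seed $\s^{\mathbf{i}}$, so it suffices to verify the conjecture on the cluster variables of each $\s^{\mathbf{i}}$. We proceed by induction along $j$. The base case, $j_-(\mathbf{i})=0$, corresponds to the ``first occurrences'' in $\mathbf{i}$, which give frozen cluster variables whose image under $\overline{D}$ can be computed directly and already has the required shape $1/\prod_\beta \beta^{n_\beta}$ with $n_\beta \in \mathbb{Z}_{\geq 0}$. For the inductive step, the recursive relation rewrites $P_j = \beta_j \prod_l P_l / P_{j_-(\mathbf{i})}$; one must then verify that this rational function is actually a monomial in positive roots with nonnegative integer exponents. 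In type $A_n$, positive roots are indexed by intervals $[i,j]$ of simple roots, and the relevant divisibilities can be read off combinatorially from the standard model of flag minors as ordinary minors; in type $D_4$ the statement reduces to a finite case verification.

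The main obstacle is precisely this divisibility-and-positivity step in the inductive argument, i.e.\ showing that $P_{j_-(\mathbf{i})}$ divides $\beta_j\prod_l P_l$ in $\mathbb{C}[\alpha_1,\ldots,\alpha_n]$ and that the quotient remains of the prescribed factored form. It is here that the restriction to types $A_n$ and $D_4$ becomes crucial: the combinatorics of overlapping positive roots is rigid enough in these cases to make the cancellations transparent, whereas for general $D_n$ and for $E$-type the bookkeeping on positive roots becomes substantially more intricate, which is consistent with Conjecture~\ref{weakconjintro} being left open outside types $A_n$ and $D_4$.
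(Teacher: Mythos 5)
Your high-level architecture---propagate a verification from one standard seed to all others via braid moves (Matsumoto--Tits connectivity), then check a single initial seed coming from a total ordering on $I$ as in \cite{Casbi2}---is indeed the paper's strategy, but the content you propose to fill it with has two genuine gaps. First, Property (B) is a purely \emph{multiplicative} identity among the rational functions $P_j$; it is not the image under $\overline{D}$ of any $T$-system or exchange relation in $\mathbb{C}[N]$. The determinantal identities are additive (of the form $x_k x'_k = \prod_{\mathrm{in}} x_l + \prod_{\mathrm{out}} x_l$), so applying the algebra morphism $\overline{D}$ yields additive relations among the $1/P_j$, not (B). In the paper, (B) for $\s^{\mathbf{i}_{nat}}$ is obtained by first computing each $P_j$ explicitly---via Nakada's colored hook formula for the strongly homogeneous determinantal modules (Proposition~\ref{propbarD}, Lemma~\ref{Dbarinit}), and in type $D_4$ via a laborious graded-character computation for the three non-homogeneous ones (Proposition~\ref{qcharfrozen}, Lemma~\ref{Dbarunfrozen})---and then verifying the multiplicative identities by a direct algebraic calculation; there is no way to shortcut this by citing a $T$-system.

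Second, you omit Property (C), and this is precisely what handles the "main obstacle" you flag. The paper's propagation theorem (\ref{thmpropag}) takes (A), (B) and (C) \emph{jointly} as hypotheses and conclusions: showing that $\overline{D}(x'_k)$ is again of the prescribed form $1/P'_k$ with $P'_k$ a product of positive roots requires Proposition~\ref{division}, whose proof hinges on the multiplicity bound (C). Without (C) you have no control on whether $P_{j_-(\mathbf{i})}$ divides $\beta_j\prod_l P_l$, and your inductive bootstrap in Step~2 stalls at exactly the step you identify as difficult. Consequently your claim that the identities (B) hold "in any $ADE$ type for every standard seed" is also unsubstantiated: the propagation lemma is type-independent, but the paper only establishes the initial conditions (A)--(C) in types $A_n$ and $D_4$, which is precisely why Theorem~\ref{mainthmintro} is restricted to those types. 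A small further slip: indices $j$ with $j_-(\mathbf{i})=0$ are the \emph{first} occurrences of each letter and correspond to cuspidal determinantal modules (cf.\ Remark~\ref{rkBalgo}), not to frozen variables; the frozen vertices are the last occurrences $j_+(\mathbf{i})=N+1$.
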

  
 We refer to Section~\ref{fullcom} for the precise definitions of the notations $j_{-}, l_{+}$. 
  The strategy of the proof is the following: we know that the standard seeds are related to each other by certain cluster mutations, corresponding to changes of reduced expressions of $w_0$. Thus one shall first show that the desired statement is preserved under these cluster mutations, so that it only remains to check it for one particular standard seed.
  
   \bigskip
  
  We consider two standard seeds $\s^{\mathbf{i}}$ and $\s^{\mathbf{i}'}$ related by a cluster mutation in the direction $k$. We denote by $x_1, \ldots , x_N$ the cluster variables of $\s^{\mathbf{i}}$ and $x'_k$ the new variable produced by the mutation. We assume that $\overline{D}(x_j)$ is of the form $1/P_j$ for every $1 \leq j \leq N$ and we want to show that $\overline{D}(x'_k)$ is of the form $1/P'_k$. The first main result of this paper consists in exhibiting certain relations between the $P_j$ entirely determined by $\mathbf{i}$ implying that $\overline{D}(x'_k)$ is of the form $1/P'_k$ (where $P'_k$ is a product of positive roots) and proving that these relations are preserved under mutation, i.e. the polynomials $P_1, \ldots , P_{k-1}, P'_k, P_{k+1} , \ldots , P_N$ satisfy the  corresponding relations determined by $\mathbf{i}'$. 
   
  For any product of positive roots $P$ and any positive root $\beta$, we denote by $(\beta ; P)$ the multiplicity of $\beta$ in $P$. 
 
   \begin{thmintro} \label{intro1}
     Let $\mathfrak{g}$ be any simply-laced type simple Lie algebra and let $\mathbf{i}$ and $\mathbf{i}'$ be two reduced expressions of $w_0$.
   Assume that the cluster variables  $x_1, \ldots , x_N$ of the  standard seed $\s^{\mathbf{i}}$ satisfy  the following properties:
     \begin{enumerate}[label=(\Alph*)]
   \item For every $1 \leq j \leq N$, the rational fraction $\overline{D}(x_j)$ is of the form $1/P_j$ where $P_j$ is a product of positive roots. 
   \item For every $1 \leq j \leq N$ one has  
   $$ P_j P_{j_{-}(\mathbf{i})} = \beta_j  \prod_{\substack{ l<j<l_{+}(\mathbf{i}) \\ i_l \cdot i_j =-1}} P_l . $$  
   \item For every $j \in J_{ex}$ and every $1 \leq i \leq N$, one has $ (\beta_i ; P_j) - (\beta_i ; P_{j_{+}(\mathbf{i})}) \leq 1 $. 
  \end{enumerate}
  Then the cluster variables of $\s^{\mathbf{i}'}$ satisfy the analogous properties determined by $\mathbf{i}'$. 
  \end{thmintro}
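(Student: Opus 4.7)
The plan is to reduce to a single cluster mutation and then use the exchange relation to compute $\overline{D}(x'_k)$, exploiting assumption (B) to force the result into the form $1/P'_k$. Any two reduced expressions of $w_0$ are connected by a sequence of commutation moves and $3$-term braid moves; commutation moves only permute the indexing of the standard seed and trivially preserve (A), (B), (C), while each braid move $\mathbf{i} \to \mathbf{i}'$ corresponds to a cluster mutation at a single exchangeable index $k$. It therefore suffices to treat one such braid move.

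Applying the algebra morphism $\overline{D}$ to the cluster exchange relation
$$x_k x'_k = \prod_{l \to k \text{ in } Q^{\mathbf{i}}} x_l + \prod_{k \to l \text{ in } Q^{\mathbf{i}}} x_l$$
and invoking (A) gives
$$\overline{D}(x'_k) = P_k \left( \frac{1}{\prod_{l \to k} P_l} + \frac{1}{\prod_{k \to l} P_l} \right).$$
The critical step is to combine the two fractions and show the numerator cancels sufficiently, leaving $1/P'_k$ with $P'_k$ a product of positive roots. The neighbors of $k$ in the standard quiver $Q^{\mathbf{i}}$ are precisely $k_{-}(\mathbf{i}), k_{+}(\mathbf{i})$ together with the indices $l$ satisfying $i_l \cdot i_k = -1$ in specified position ranges relative to $k$ — which are exactly the indices appearing in relation (B) at $k$. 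Substituting (B) into the common-denominator sum produces a single positive-root prefactor $\beta_k$ in the numerator, while the multiplicity bound (C) guarantees that the $P_l$ common to both monomials can be divided out cleanly, so that the quotient is a polynomial product of roots rather than a genuine rational fraction.

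Once $P'_k$ is explicit, properties (B) and (C) for $\s^{\mathbf{i}'}$ become combinatorial checks. The quantities $j_{-}$ and $j_{+}$ differ between $\mathbf{i}$ and $\mathbf{i}'$ only for a bounded set of indices near the braid move, so only finitely many new relations of type (B) need to be verified, each reducing either to (B) in $\s^{\mathbf{i}}$ or to the explicit formula for $P'_k$; property (C) in $\s^{\mathbf{i}'}$ is then obtained by tracking multiplicities along the updated chains $l, l_{+}(\mathbf{i}'), \ldots$ through the exchange formula. The main obstacle is the simplification in the previous paragraph: the combined use of (B) at $k$ to pin down the common factor and of (C) to control multiplicities is exactly what forces the sum to collapse into a product of positive roots, and propagating (C) into $\s^{\mathbf{i}'}$ via the structure of $P'_k$ is the most delicate bookkeeping step of the argument.
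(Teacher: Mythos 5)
Your high-level skeleton --- apply $\overline{D}$ to the exchange relation, use (B) to collapse the two-term sum, invoke (C) for a divisibility, then verify (B') and (C') near the braid move --- matches the paper's strategy, and the reduction to a single braid move via commutation moves is fine. But the two steps you treat as following automatically are exactly where the argument lives, and your description of both is materially wrong.

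When you combine the two fractions, the numerator is $P_{\mathrm{out}(k)} + P_{\mathrm{in}(k)}$, a sum of two monomials in positive roots that has no visible reason to factor. The paper's mechanism has two ingredients you never identify: first, dividing (B) at $k$ by (B) at $k_{+}(\mathbf{i})=k+2$ yields $\beta_k P_{\mathrm{in}(k)} = \beta_{k+2}P_{\mathrm{out}(k)}$ (Lemma~\ref{Dbarhat}); second, the braid move forces the root identity $\beta_k + \beta_{k+2} = \beta_{k+1}$ (Lemma~\ref{beta}). Only with both does the sum collapse, giving $\overline{D}(x'_k) = \beta_{k+1}P_k/\tilde P_k$ with $\tilde P_k := \beta_k P_{\mathrm{in}(k)}$. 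Your claim that (B) produces ``a single positive-root prefactor $\beta_k$'' is incorrect --- the factor that appears is $\beta_{k+1}$, which is not among the roots visible in (B) at $k$, and it only emerges from the root identity. Moreover, the residual divisibility you then need is $\beta_{k+1}P_k \mid \tilde P_k$, and your phrase ``the $P_l$ common to both monomials can be divided out cleanly'' misidentifies the issue: the question is whether $P_k$ --- which a priori shares no factor with $P_{\mathrm{in}(k)}$ or $P_{\mathrm{out}(k)}$ --- divides $\beta_k P_{\mathrm{in}(k)}$. This is the content of Proposition~\ref{division}, proved root-by-root by substituting (B) at $m=k+1$ into (B) at $k$ and only then invoking the bound (C) at $l=k$; the $\beta_{k+1}$ factor is handled separately via the multiplicity facts $(\beta_{k+1};P_k)=0$ and $(\beta_{k+1};P_{k+1})=1$, which are consequences of (A) and (B). Without these specific arguments, your outline does not establish (A') for $\s^{\mathbf{i}'}$, and the remaining verifications of (B') and (C') (which you correctly localize to the indices $r,s,k,k+1,k+2$) cannot be started because they all require the explicit form $P'_k = \tilde P_k/(\beta_{k+1}P_k)$.
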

  
   The Property (B) is rather strong and has interesting consequences, as explained in Remarks~\ref{rkBalgo} and~\ref{rkBdom}. Note that the Property (C) is only relevant in types $D_n$ and $E_6, E_7, E_8$ as in type $A_n$ the polynomials $P_j$ are always square-free. Nonetheless, it is crucial for proving that $\overline{D}(x'_k)$ is of the desired form. 
  
  The second main result of this paper is to exhibit one particular standard seed satisfying the conditions required by the previous Theorem for types $A_n$ and $D_4$. We use Kang-Kashiwara-Kim-Oh's results \cite{KKKO,KK} and more precisely the description of certain determinantal modules in $R-gmod$ in terms of root partitions.
  
   \begin{thmintro} \label{intro2}
 Assume $\mathfrak{g}$ is of type $A_n , n \geq 1$ or $D_4$. Let $\mathbf{i}_{nat}$ denote the reduced expression of $w_0$ corresponding to the natural ordering on the vertices of the Dynkin diagram of $\mathfrak{g}$. Then Properties (A), (B) and (C) hold for the standard seed $\s^{\mathbf{i}_{nat}}$. 
 \end{thmintro}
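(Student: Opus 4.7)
The strategy is to build on the explicit description, obtained in \cite{Casbi2}, of the root partitions of the determinantal modules categorifying the flag minors in a standard seed $\s^{\mathbf{i}}$ whenever $\mathbf{i}$ comes from a total ordering on the Dynkin vertex set. Applied to $\mathbf{i}_{nat}$, this yields a purely combinatorial description of the cluster variables of $\s^{\mathbf{i}_{nat}}$, and Properties (A), (B), (C) can be rephrased as explicit statements about the associated root partitions that I would verify case by case.

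In type $A_n$, each flag minor $x_j$ in $\s^{\mathbf{i}_{nat}}$ is categorified by a strongly homogeneous module in $R\textrm{-}gmod$, whose associated fully-commutative element $w_j \in W$ can be read off directly from $\mathbf{i}_{nat}$ and $j$. Then \eqref{eqintro2} immediately yields (A), with
$$ P_j = \prod_{\beta \in \Phi_{+}^{w_j}} \beta , $$
and since each $\Phi_{+}^{w_j}$ is a set (not a multiset) of positive roots, Property (C) is automatic. For (B) I would unpack the combinatorial definition of $j_{-}(\mathbf{i}_{nat})$ and $l_{+}(\mathbf{i}_{nat})$ and observe that the inversion set $\Phi_{+}^{w_{j_{-}(\mathbf{i})}}$ differs from $\Phi_{+}^{w_j}$ by removing $\beta_j$ together with a transparent collection of roots completely determined by the neighbors of $i_j$ in the Dynkin diagram; the identity (B) then follows from a telescoping argument at the level of products of positive roots.

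In type $D_4$, I would proceed by finite enumeration. The standard seed $\s^{\mathbf{i}_{nat}}$ has only twelve cluster variables, and the root partitions produced by \cite{Casbi2} can be listed explicitly. Most of the corresponding determinantal modules remain strongly homogeneous, so that \eqref{eqintro2} applies directly; the remaining cases, concentrated around the triple vertex of the Dynkin diagram, are handled by combining the algebra morphism property of $\overline{D}$ with the determinantal (T-system) identities among flag minors, thus expressing the unknown $\overline{D}(x_j)$ in terms of $\overline{D}$-values already computed. Once every $P_j$ has been obtained as an explicit product of positive roots, Properties (A), (B), (C) are checked by direct inspection.

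The main obstacle lies precisely in handling the non-strongly-homogeneous cases in type $D_4$, where formula \eqref{eqintro2} does not apply verbatim. Property (C) is the most delicate: one must pinpoint which positive roots acquire multiplicity two in some $P_j$ and confirm that these multiplicities propagate coherently along the seed so that the jumps $(\beta_i ; P_j) - (\beta_i ; P_{j_{+}(\mathbf{i})})$ never exceed one. I expect this can be controlled via the interplay between the root partitions of \cite{Casbi2} and the determinantal identities, but the verification is inherently case-dependent, which is why the present theorem is restricted to types $A_n$ and $D_4$.
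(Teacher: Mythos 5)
Your treatment of the type $A_n$ case is essentially the paper's argument: via \cite{Casbi2} one reads off the dominant word of each flag minor in $\s^{\mathbf{i}_{nat}}$, identifies the corresponding dominant minuscule element $w[k,r]$ of $W$, applies \eqref{eqintro2} to obtain $P_j = \prod_{\beta \in \Phi_+^{w[k,r]}} \beta$, notes that Property (C) is automatic because the $P_j$ are square-free, and verifies Property (B) by a telescoping computation on the inversion sets. This part is correct and matches the paper.

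The type $D_4$ case, however, has a genuine gap exactly in the step you yourself flag as the main obstacle. Three of the twelve determinantal modules --- $L(134213)$, $L(23134213)$, and the frozen module $L(3423134213)$ --- are not strongly homogeneous, so \eqref{eqintro2} cannot be invoked, and you propose to recover their $\overline{D}$-values from ``determinantal (T-system) identities among flag minors.'' But the relevant T-systems of Geiss-Leclerc-Schr\"oer, taken with $v = e$ so that one factor is the flag minor $D(us_i\omega_i,\omega_i)$ you care about, necessarily involve the unipotent minors $D(u\omega_i, s_i\omega_i)$ and $D(us_i\omega_i, s_i\omega_i)$, which are not flag minors of any standard seed and whose $\overline{D}$-values are not supplied by the strongly-homogeneous machinery. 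Applying the algebra morphism $\overline{D}$ therefore leaves you with more unknowns than equations; and deducing $P_5, P_8, P_{11}$ from the other $P_j$ by Property~(B) is circular, since (B) at the seed $\s^{\mathbf{i}_{nat}}$ is part of what you must prove. The paper takes an entirely different route here: it exploits the fact that the frozen module $M = L(3423134213)$ $q$-commutes with every simple object in $R$-gmod, in particular with all cuspidal modules $L(i)$, computes the degrees of the corresponding renormalized $R$-matrices (Lemma~\ref{lemcomut}), and then shows via the quantum shuffle product formula that these $q$-commutation constraints force the entire graded character of $M$ (Proposition~\ref{qcharfrozen}); the value $\overline{D}([M])$ then follows by plugging into the explicit formula~\eqref{Dbar}. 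Analogous graded-character computations handle $L(134213)$ and $L(23134213)$. This $q$-commutativity rigidity argument is the indispensable technical core of the $D_4$ proof, and nothing in your sketch substitutes for it.
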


 As explained above, Theorem~\ref{mainthmintro} follows by combining Theorems~\ref{intro1} and~\ref{intro2}. Moreover, we also get that when $\mathfrak{g}$ is of type $A_n$ or $D_4$, the flag minors of any standard seed of $\mathbb{C}[N]$ satisfy Properties (B) and (C).

 \bigskip
 
 This paper is organized as follows. We begin with some reminders on quiver Hecke algebras and their irreducible finite-dimensional representations (Section~\ref{remindKLR3} and~\ref{remindKR3}). We also recall the constructions of determinantal modules from the works of Geiss-Leclerc-Schr\"oer \cite{GLS} and Kang-Kashiwara-Kim-Oh \cite{KKKO} (Section~\ref{reminddeterm}). In Section~\ref{remindKRHom3}, we gather the main facts on the combinatorics of fully-commutative elements of Weyl groups. We also explain (Section~\ref{homogKLR}) how this combinatorics is related to the representation theory of quiver Hecke algebras via the works of Kleshchev-Ram \cite{KRhom}. Section~\ref{remindMV3} is devoted to the necessary reminders on the theory of Mirkovi\'c-Vilonen cycles and equivariant multiplicities following \cite{BKK,Brion,MV}. In Section~\ref{result3}, we state our main results together with some motivations and explanations about the structure of the proofs. Section~\ref{propag} contains the proof of Theorem~\ref{intro1}. Sections~\ref{initAn} and~\ref{initD4} are respectively devoted to the proofs of Theorem~\ref{intro2} in types $A_n$ and $D_4$. We conclude in Section~\ref{conclu} by discussing several evidences suggesting a cluster-theoretic interpretation of prime strongly homogeneous modules. 
   
  \bigskip
  
    \textit{The author is supported by the European Research Council under the European Union's Framework Programme H2020 with ERC Grant Agreement number 647353 $\allowbreak$ Qaffine.}

\section*{Acknowledgements}
I warmly thank my advisor David Hernandez for his constant encouragements and his kind supervision during these three years of PhD, as well as for many valuable suggestions on a previous version of this work. I would also like to express all my gratitude towards Joel Kamnitzer, Pierre Baumann and Bernard Leclerc, whose explanations and pieces of advice were the starting point of this project. Finally I thank Masaki Kashiwara and Hironori Oya for very insightful and enjoyable conversations.

 \section{Quiver Hecke algebras and their representations}
 
 We begin with some reminders about the representation theory of quiver Hecke algebras and in particular its applications to monoidal categorifications of cluster algebras following the works of Kang-Kashiwara-Kim-Oh \cite{KKK,KKKO,KK}. 
 
  \subsection{Quiver Hecke algebras}
  \label{remindKLR3}
  
  We fix the notations and recall the main properties of quiver Hecke algebras. 
 
 \bigskip
 
   We will always consider a semisimple Lie algebra $\mathfrak{g}$ of finite simply-laced type, with a fixed labeling $I = \{1, \ldots , n \}$ of the set of vertices of the associated Dynkin diagram. We let $\Pi = \{ \alpha_i , i \in  I \}$ denote the set of simple roots, $Q_{+} := \bigoplus_{i \in I} \mathbb{N} \alpha_i$, and $\Phi_{+} \subset Q_{+}$ the set of positive roots. We let $A = (a_{ij})$ denote the Cartan matrix associated to $\mathfrak{g}$ and we consider the symmetric bilinear form $i,j \mapsto i \cdot j$ on $\mathbb{Z}[I]$ defined by $i \cdot j := a_{i,j}$ for every $i,j \in I$.  This induces a symmetric bilinear form $( \cdot , \cdot )$ on $Q_{+}$ defined by $(\alpha_i , \alpha_j) = i \cdot j$ for any $i,j \in I$. 
 We also let $\mathcal{M}$ denote the set of all finite words over the alphabet $I$. For any such word $\mathbf{j} = (j_1, \ldots , j_r)$, the  \textit{weight} of $\mathbf{j}$ is the element of $Q_{+}$ defined as  
$$ \wt(\mathbf{j}) :=  \sum_{i \in I}  \sharp \{k, j_k = i \} \alpha_i  \in Q_{+} . $$
  Quiver Hecke algebras are defined as a family $\{ R(\beta) , \beta \in Q_{+} \}$ of associative $\mathbf{k}$-algebras indexed by $Q_{+}$ (where $\mathbf{k}$ is a fixed algebraically closed field of characteristic different of $2$). For every $\beta := \sum_i a_i \alpha_i \in Q_{+}$, the algebra $R(\beta)$ is generated by three kind of generators: there are polynomial generators $x_1 , \ldots , x_r$, braiding generators $\tau_1 , \ldots , \tau_{r-1}$, (where $r := \sum_i a_i$) and idempotents $e(\mathbf{j}), \mathbf{j} \in \text{Seq}(\beta)$ where $\text{Seq}(\beta)$ is the finite subset of $\mathcal{M}$ given by
 $$ \text{Seq}(\beta) := \{ \mathbf{j} \in \mathcal{M} \mid \wt(\mathbf{j}) = \beta \} . $$
 The idempotent generators commute with the polynomial ones and are orthogonal to each other in the sense that 
 $$ e(\mathbf{j})e(\mathbf{j}') = \delta_{\mathbf{j},\mathbf{j}'} e(\mathbf{j}) . $$
 It is a crucial point that the algebra $R(\beta)$ carries a natural $\mathbb{Z}$-grading given by 
 $$ \deg e(\mathbf{j}) = 0 \quad \deg x_k e(\mathbf{j}) = 2 \quad \deg \tau_l e(\mathbf{j}) = - j_l \cdot j_{l+1} $$
 for every $\mathbf{j} = (j_1, \ldots , j_r) , r \geq 1$. Thus one can consider the category $R(\beta)-gmod$ of finite dimensional graded $R(\beta)$-modules. We also define
 $$ R-gmod := \bigoplus_{\beta} R(\beta)-gmod . $$
 The category $R-gmod$ can be endowed with a structure of a monoidal category via a monoidal product denoted by $\circ$ and constructed as a parabolic induction. Therefore the Grothendieck group $K_0(R-gmod)$ has a ring structure. There is also a grading shift functor in $R-gmod$ which yields a $\mathbb{Z}[q^{\pm 1}]$-module structure on $K_0(R-gmod)$. 
The following results are the main properties of quiver Hecke algebras:

 \begin{thm}[Khovanov-Lauda \cite{KL}, Rouquier \cite{R}] \label{firstthmKLR}
 There is an isomorphism of $\mathbb{Z}[q^{\pm 1}]$-modules
 $$ K_0(R-gmod) \xrightarrow[]{\simeq} \Aqn . $$
 \end{thm}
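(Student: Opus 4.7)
The plan is to work dually, through the natural pairing between the finite-dimensional and projective sides. First I would construct a $\mathbb{Z}[q^{\pm 1}]$-algebra homomorphism $\Phi : K_0(R\text{-gmod}) \to \mathcal{A}_q(\mathfrak{n})$ by sending the class $[M]$ of a finite-dimensional graded $R(\beta)$-module to the linear form on $U_q^-(\mathfrak{g})_{-\beta}$ whose value on a PBW-type monomial is the graded dimension of the corresponding weight space $e(\mathbf{j})M$. Multiplicativity of $\Phi$ with respect to the induction product $\circ$ would follow from the Mackey-type decomposition of $\operatorname{Res} \circ \operatorname{Ind}$ that the KLR relations force, matching the twisted coproduct on $\mathcal{A}_q(\mathfrak{n})$.

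Next I would verify that $\Phi$ takes values in $\mathcal{A}_q(\mathfrak{n})$ (not just its completion) by showing that the character of every finite-dimensional module lies in the image of the Shapovalov-type pairing, and that the Serre relations are realized in $K_0(R\text{-gmod})$: this reduces to an explicit local computation in the algebras $R(2\alpha_i+\alpha_j)$ for $i\cdot j=-1$, where one must show that the alternating sum of classes of induced modules $[L(i)^{\circ 2} \circ L(j)] - [q+q^{-1}][L(i)\circ L(j) \circ L(i)] + [L(j)\circ L(i)^{\circ 2}]$ vanishes in the Grothendieck group.

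Having defined $\Phi$, surjectivity would come from showing that the images of the classes of simple modules form a linearly independent family that hits each dual PBW element (up to unitriangular change of basis), using the existence of a highest weight word $\mathbf{j}_{\max}$ for each simple module and the fact that weight spaces $e(\mathbf{j})M$ detect simples in a triangular fashion. Injectivity would follow from the same triangularity together with the non-degeneracy of the bilinear pairing $K_0(R\text{-proj}) \otimes K_0(R\text{-gmod}) \to \mathbb{Z}[q^{\pm 1}]$, or alternatively by deducing it from the dual statement $K_0(R\text{-proj}) \simeq {}_{\mathcal{A}}U_q^-(\mathfrak{g})$ and dualizing.

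The main obstacle is the compatibility with the algebra structures: verifying that the induction product in $K_0(R\text{-gmod})$ satisfies exactly the relations defining $\mathcal{A}_q(\mathfrak{n})$, which requires a careful calculation of how parabolic induction interacts with the braiding generators $\tau_i$ and the resulting shuffle-like product on characters. Everything else (defining $\Phi$, injectivity via characters, matching bases) is comparatively formal once this core identification is established; in particular the grading shifts match the $q$-powers exactly because $\deg \tau_l e(\mathbf{j}) = -j_l \cdot j_{l+1}$ is tailored so that Ind produces the quantum shuffle product.
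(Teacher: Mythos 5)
Theorem~\ref{firstthmKLR} is not proved in the paper: it is cited as a black box from Khovanov--Lauda \cite{KL} and Rouquier \cite{R}, so there is no in-text argument to compare against. Your sketch does follow the general shape of the standard arguments in the literature (realize $[M]$ through its graded character, use the quantum shuffle product formula for multiplicativity, then match bases), and indeed the multiplicativity you flag as the ``main obstacle'' is exactly Proposition~\ref{quantumshuffle} quoted in the paper, so that part comes for free.

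Where the proposal is not airtight is the step you single out for showing that $\Phi$ lands in $\mathcal{A}_q(\mathfrak{n})$. You propose to verify the Serre relation $[L(i)^{\circ 2}\circ L(j)]-[2][L(i)\circ L(j)\circ L(i)]+[L(j)\circ L(i)^{\circ 2}]=0$ in $K_0(R\text{-gmod})$. This identity is in fact true (one can compute the corresponding alternating sum of shuffle products $(i)\circ(i)\circ(j)-(q+q^{-1})(i)\circ(j)\circ(i)+(j)\circ(i)\circ(i)$ and check it vanishes, then invoke injectivity of $ch_q$), but it does not by itself give what you want. Over $\mathbb{Z}[q^{\pm 1}]$ the lattices $\mathbb{Z}[q^{\pm 1}]\langle e_i\rangle/(\text{Serre})$, ${}_{\mathcal{A}}U_q^-(\mathfrak{g})$ (generated by divided powers $e_i^{(n)}$), and $\mathcal{A}_q(\mathfrak{n})$ (the graded dual, carrying the dual canonical basis) are three genuinely different integral forms of the same $\mathbb{Q}(q)$-algebra; showing that the $[L(i)]$ satisfy Serre does not tell you that the characters $ch_q(L(\mu))$ of \emph{all} simples lie in $\mathcal{A}_q(\mathfrak{n})$, which is the actual content needed. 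The missing integral ingredient is that $K_0(R\text{-gmod})$ is $\mathbb{Z}[q^{\pm1}]$-spanned by the classes $[L(i_1)\circ\cdots\circ L(i_r)]$, equivalently that each $[L(\mu)]$ is an integral polynomial in the $[L(i)]$. This is where the substance lies: Khovanov--Lauda get it by working first on the projective side ($K_0(R\text{-pmod})\simeq{}_{\mathcal{A}}U_q^-(\mathfrak{g})$, using the polynomial subalgebra to control ranks) and then dualizing through the Euler pairing; alternatively, on the finite-dimensional side, one uses the unitriangular decomposition of the standard modules $L(\mathbf{j}_k)^{\circ a_k}\circ\cdots\circ L(\mathbf{j}_1)^{\circ a_1}$ against the simples, together with the Lyndon-word induction of Kleshchev--Ram and Leclerc. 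You gesture at this under ``hits each dual PBW element up to unitriangular change of basis'', but this should be carried out as the crux of the proof rather than treated as a formality; once it is in place, the Serre computation you describe becomes superfluous (the Serre relations then hold automatically in the image because it identifies with an integral form of $U_q^-$ inside the shuffle algebra).
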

 
 \begin{thm}[Rouquier \cite{R}, Varagnolo-Vasserot \cite{VV}] \label{secondthmKLR}
 The above isomorphism induces a bijection between the set of classes of simple objects in $R-gmod$ and the dual canonical basis of $\Aqn$. 
 \end{thm}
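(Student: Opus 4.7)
The plan is to realize the quiver Hecke algebras geometrically and then transport Lusztig's geometric description of the dual canonical basis to $R\text{-gmod}$. Since Theorem~\ref{firstthmKLR} already provides a $\mathbb{Z}[q^{\pm 1}]$-module isomorphism $K_0(R\text{-gmod}) \simeq \Aqn$, what remains is to match the parametrization of the simple objects on the left-hand side with that of the dual canonical basis on the right.

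First, I would fix an orientation of the Dynkin diagram of $\mathfrak{g}$ and, for each $\beta \in Q_{+}$, consider the affine space $E_\beta$ parametrizing representations of this quiver of dimension vector $\beta$, together with the action of $G_\beta := \prod_i GL_{a_i}$. For each sequence $\mathbf{j} \in \text{Seq}(\beta)$ one forms Lusztig's flag-type variety $\widetilde{\mathcal{F}}_{\mathbf{j}}$ of composition series of type $\mathbf{j}$ and considers the $G_\beta$-equivariant direct image $\mathcal{L}_{\mathbf{j}}$ of the constant sheaf under the forgetful map to $E_\beta$. By Lusztig's theorem, the simple perverse constituents of $\bigoplus_{\mathbf{j}} \mathcal{L}_{\mathbf{j}}$, modulo Tate twists, are in bijection with the dual canonical basis elements of weight $\beta$.

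Next, following the strategy of Varagnolo--Vasserot, I would identify $R(\beta)$ with the graded $G_\beta$-equivariant Ext-algebra of $\bigoplus_{\mathbf{j}} \mathcal{L}_{\mathbf{j}}$. In this picture the idempotents $e(\mathbf{j})$ appear as projectors onto summands indexed by $\mathbf{j}$, the polynomial generators $x_k e(\mathbf{j})$ arise from equivariant Chern classes of tautological line bundles on $\widetilde{\mathcal{F}}_{\mathbf{j}}$, and the braiding generators $\tau_l e(\mathbf{j})$ come from convolution with the elementary correspondence that swaps the $l$-th and $(l+1)$-st entries of $\mathbf{j}$. Verifying the defining KLR relations requires an explicit local computation on two-step partial flag varieties, and this Ext-algebra calculation is the main technical obstacle of the proof: the polynomial factor $Q_{i,j}(u,v)$ appearing in the quadratic relation for $\tau_l$ must be extracted from the geometry of the non-transverse intersection of the two flag-type correspondences, which is where the Cartan matrix enters.

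Granted this realization, a standard Morita-type argument, passing through the idempotent completion of the equivariant derived category as in the formality framework of Beilinson--Ginzburg--Soergel, yields an equivalence between $R(\beta)\text{-gmod}$ and the category of finite-dimensional graded modules over this Ext-algebra. Under this equivalence, the simple objects of $R(\beta)\text{-gmod}$, taken up to grading shift, correspond bijectively to the simple perverse constituents of $\bigoplus_{\mathbf{j}} \mathcal{L}_{\mathbf{j}}$; by the previous step these are indexed by the dual canonical basis elements of weight $\beta$. Finally, one checks that the Tate twist matches the grading shift functor and that geometric convolution matches the parabolic induction product $\circ$ on $R\text{-gmod}$, so that the resulting bijection of simples is compatible with the ring isomorphism of Theorem~\ref{firstthmKLR}, which completes the proof.
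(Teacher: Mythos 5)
Note first that the paper itself does not prove Theorem~\ref{secondthmKLR}: it is quoted as a known result due to Rouquier and Varagnolo--Vasserot, so there is no ``paper's own proof'' to match against. Your sketch is, nevertheless, a recognizable high-level outline of the Varagnolo--Vasserot geometric proof in the symmetric (simply-laced) case, and the main steps you list --- realizing $R(\beta)$ as the $G_\beta$-equivariant Yoneda algebra of $\bigoplus_{\mathbf{j}}\mathcal{L}_{\mathbf{j}}$, matching the KLR generators with projectors, Chern classes and elementary correspondences, invoking a formality/parity argument to pass from the derived category to modules over the Ext-algebra, and reading off simples from simple perverse constituents --- are indeed the structural components of that argument.

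One point where the sketch is loose enough to be a genuine gap if left as stated: what the Ext-algebra equivalence directly produces is a bijection between simple graded $R(\beta)$-modules (up to shift) and the simple perverse constituents of $\bigoplus_{\mathbf{j}}\mathcal{L}_{\mathbf{j}}$, i.e.\ with the \emph{indexing set} of the canonical basis of weight $\beta$. The theorem, however, asserts more: that under the isomorphism $K_0(R\text{-}gmod)\simeq\Aqn$ of Theorem~\ref{firstthmKLR}, the \emph{class} $[L]$ of a simple module \emph{equals} the corresponding dual canonical basis element, not merely that the two sets are in bijection. To get this, one must also identify the perfect pairing between $K_0(R\text{-}proj)$ and $K_0(R\text{-}gmod)$ (given by $\dim_q\operatorname{Hom}$) with Lusztig's/Kashiwara's pairing between $U_q^-(\mathfrak{g})$ and $\Aqn$, and then use the Varagnolo--Vasserot result that classes of indecomposable graded projectives go to canonical basis elements; duality of bases under the matched pairings then forces $[L]$ to be the dual canonical basis vector. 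Your sketch only verifies compatibility of convolution with $\circ$ and of Tate twist with grading shift, which is not enough to pin down the bases rather than the index sets. Adding the pairing-comparison step would close this gap.
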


  \subsection{Irreducible finite-dimensional representations}
  \label{remindKR3}
  
  This subsection is devoted to some reminders about Kleshchev-Ram's parametrization of simple finite-dimensional modules via root partitions (or dominant words) using the combinatorics of good Lyndon words. We also recall the notions of graded character as well as the quantum shuffle product formula. These will be useful in Section~\ref{initD4}. 
 
  \bigskip
   
  For a finite-type simple Lie algebra $\mathfrak{g}$, the simple objects in $R-gmod$ have been classified by Kleshchev-Ram \cite{KR} in terms of root partitions. First fix an arbitrary total order $<$ on $I$. It induces a lexicographic order on $\mathcal{M}$ that we still denote by $<$. Any module in $R-gmod$ can be decomposed as a direct sum of vector spaces 
  $$ M =  \bigoplus_{\mathbf{j} \in \text{Seq}(\beta)} e(\mathbf{j}) \cdot M . $$
  Thus one can consider $\max(M)$ the maximal element $\mathbf{j}$ for $<$ such that $e(\mathbf{j}) \cdot M \neq 0$. Then one can define the \textit{cuspidal} representations in $R-gmod$ in the following way:
  
   \begin{prop}[ {{\cite[Lemma 6.3]{KR}}}]
   For every positive root $\beta \in \Phi_{+}$, there is a unique simple module $S_{\beta}$ in $R(\beta)-gmod$ (up to isomorphism and grading shift) such that
   $$ \max(S_{\beta}) = \min \left( \max(M) \mid \text{$M$ simple in $R(\beta)-gmod$} \right) . $$
  \end{prop}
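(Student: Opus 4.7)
The plan is to separate existence and uniqueness, and to obtain uniqueness as a consequence of a stronger injectivity statement: the map $M \mapsto \max(M)$ is one-to-one on iso classes of simples in $R(\beta)\text{-gmod}$ modulo grading shift.

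For existence, I would argue that the set of such iso classes is finite. By Theorems~\ref{firstthmKLR} and~\ref{secondthmKLR} it is in bijection with the weight-$\beta$ piece of the dual canonical basis of $\Aqn$, which has finite cardinality because $\mathfrak{g}$ is of finite type. Consequently $\{\max(M)\mid M\text{ simple in }R(\beta)\text{-gmod}\}$ is a finite subset of the finite set $\text{Seq}(\beta)$ equipped with the lexicographic order, hence admits a unique minimum $\mathbf{j}_0$, and $\mathbf{j}_0$ is attained by at least one simple module $S_\beta$.

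For uniqueness, I would introduce for each $\mathbf{j}\in\text{Seq}(\beta)$ the standard module
$$\Delta(\mathbf{j}) := R(\beta)e(\mathbf{j})\,\big/\sum_{\mathbf{j}'>\mathbf{j}} R(\beta)e(\mathbf{j}')R(\beta)e(\mathbf{j}).$$
Any simple $L$ with $\max(L)=\mathbf{j}$ is automatically a quotient of $\Delta(\mathbf{j})$: picking a nonzero vector in $e(\mathbf{j})\cdot L$, cyclicity of $L$ yields a surjection $R(\beta)e(\mathbf{j})\twoheadrightarrow L$ which kills the defining ideal of $\Delta(\mathbf{j})$ because $e(\mathbf{j}')\cdot L=0$ for every $\mathbf{j}'>\mathbf{j}$ by definition of $\max$. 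The crucial step is then to show that $\Delta(\mathbf{j})$ admits a \emph{unique} simple quotient, equivalently that $\dim_{\mathbf{k}} e(\mathbf{j})\cdot\mathrm{hd}\,\Delta(\mathbf{j})=1$ (concentrated in a single grading). Granting this, $L$ is uniquely determined by $\mathbf{j}=\max(L)$ up to grading shift; applying this to $\mathbf{j}=\mathbf{j}_0$ yields uniqueness of $S_\beta$.

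The main obstacle is precisely the one-dimensionality of $e(\mathbf{j})\cdot\mathrm{hd}\,\Delta(\mathbf{j})$. This is the heart of Kleshchev-Ram's analysis and ultimately rests on Leclerc's description \cite{Leclerc} of the dual canonical basis via the quantum shuffle algebra, together with the combinatorics of good Lyndon words, which together force the graded multiplicity of $\mathbf{j}$ as a weight of any simple $L$ with $\max L=\mathbf{j}$ to equal a single monomial in $q$. Once this technical input is secured, the remainder of the proof is formal: existence is a finiteness statement, and uniqueness of $S_\beta$ reduces to the ``highest weight'' mechanism above, the minimality of $\mathbf{j}_0$ playing no role beyond singling out a particular simple among all simples in $R(\beta)\text{-gmod}$.
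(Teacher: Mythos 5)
The paper does not prove this proposition; it simply cites it from Kleshchev--Ram as \cite[Lemma 6.3]{KR}, so there is no in-paper argument to compare against. Evaluating your proposal on its own merits: the existence half is fine (finitely many simples up to shift in each weight, hence a finite nonempty set of max-words in the totally ordered $\text{Seq}(\beta)$), and the reduction of uniqueness to ``$\Delta(\mathbf{j})$ has a unique simple quotient up to shift'' is correctly set up --- every simple $L$ with $\max(L)=\mathbf{j}$ is indeed a quotient of $\Delta(\mathbf{j})$, and every simple quotient of $\Delta(\mathbf{j})$ has $e(\mathbf{j})$ acting nontrivially, so $\dim e(\mathbf{j})\cdot\mathrm{hd}\,\Delta(\mathbf{j})=1$ (in a single degree) would finish the job.

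The gap is that this one-dimensionality claim is the entire content of the result, and you neither prove it nor explain why invoking it is not circular. Your parenthetical remark that ``minimality of $\mathbf{j}_0$ plays no role'' is precisely the red flag: if $e(\mathbf{j})\cdot\mathrm{hd}\,\Delta(\mathbf{j})$ were known to be one-dimensional for \emph{every} $\mathbf{j}$, you would have proved that $\max$ is injective on all simples, which is essentially the Kleshchev--Ram classification theorem itself. But in \cite{KR} that theorem is deduced \emph{from} the existence and uniqueness of cuspidal modules (i.e.\ from this very Lemma 6.3), so appealing to the injectivity of $\max$ --- or to Leclerc's shuffle-algebra description of the dual canonical basis as though it already delivered the classification --- is circular at this stage. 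What the actual argument must exploit, and what your proposal sidesteps, is the \emph{minimality} of $\mathbf{j}_0$: one restricts a hypothetical simple $L$ with $\max(L)=\mathbf{j}_0$ along the Mackey/parabolic induction formalism and uses that no simple in $R(\beta)$-gmod has a smaller max-word to pin $L$ down directly, without presupposing the classification. Until that step is supplied, the uniqueness half of your proof is a correct reduction but not a proof.
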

  
 The module $S_{\beta}$ is called the \textit{cuspidal representation of weight $\beta$}. The map 
   $$\beta \longmapsto \max(S_{\beta}) $$ 
induces a bijection from $\Phi_{+}$ to a finite subset of $\mathcal{M}$ denoted $\mathcal{GL}$ and whose elements are called \textit{good Lyndon words}. The inverse of this bijection is given by 
\begin{equation}  \label{bijGL}
 \begin{array}{ccc}
  \mathcal{GL} & \longrightarrow & \Phi_{+} \\
     \mathbf{j} & \longmapsto &   \wt(\mathbf{j}) .
 \end{array}
 \end{equation}
 There is an algorithm that allows one to compute inductively the elements of $\mathcal{GL}$ (see {{\cite[Section 4.3]{Leclerc}}}). 
The main classification result is the following:

\begin{thm}[Kleshchev-Ram \cite{KR}]
There is a bijection between the set of isomorphism classes of simple objects (up to grading shift) in $R-gmod$ and the set 
$$ \M := \{ {\bf j}_1 \cdots {\bf j}_k \mid {\bf j}_1 , \ldots , {\bf j}_k \in \mathcal{GL} , {\bf j}_1 \geq \cdots \geq {\bf j}_k \}  $$
given by 
 $$ \mu := {\bf j}_1 \cdots {\bf j}_k \in \M \enspace \longmapsto \enspace L(\mu) := \text{hd} \left(L({\bf j}_1) \circ \cdots \circ L({\bf j}_k) \right) $$
  and moreover one has
  $$ \max(L(\mu)) = \mu . $$
  \end{thm}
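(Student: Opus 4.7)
The plan is to prove the theorem by induction on the height of the weight $\beta \in Q_{+}$. The base case is provided by the cuspidal modules $S_{\beta} = L(\mathbf{j})$ for $\mathbf{j} \in \mathcal{GL}$, whose existence, uniqueness and the identity $\max(S_{\beta}) = \mathbf{j}$ have just been recalled through the bijection~\eqref{bijGL}. For the inductive step, given a dominant word $\mu = \mathbf{j}_1 \cdots \mathbf{j}_k$ with $\mathbf{j}_1 \geq \cdots \geq \mathbf{j}_k$ all in $\mathcal{GL}$, the task is to control how $\max$ behaves under the convolution $L(\mathbf{j}_1) \circ \cdots \circ L(\mathbf{j}_k)$ and to extract a distinguished simple quotient from it.

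The main technical ingredient is a shuffle-type description of $\max$ for a convolution: for $M, N$ in $R-gmod$, the word $\max(M \circ N)$ is the lexicographically largest word appearing in the shuffle of $\max(M)$ and $\max(N)$, and its multiplicity in $M \circ N$ is computable from those of $\max(M)$ in $M$ and of $\max(N)$ in $N$. I would verify this by a direct analysis of the Mackey-type filtration describing the restriction of $M \circ N$ to the parabolic subalgebra $R(\wt(M)) \otimes R(\wt(N))$, using the explicit action of the generators $e(\mathbf{j})$ and $\tau_i$. Combined with the classical combinatorial fact that the lexicographically greatest word in the iterated shuffle of a weakly decreasing sequence $\mathbf{j}_1 \geq \cdots \geq \mathbf{j}_k$ of Lyndon words is the concatenation $\mathbf{j}_1 \cdots \mathbf{j}_k$ with positive multiplicity, this yields that $\mu$ appears in $L(\mathbf{j}_1) \circ \cdots \circ L(\mathbf{j}_k)$ with strictly positive multiplicity, and that no strictly larger word occurs there.

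From this, standard highest-word arguments force the convolution $L(\mathbf{j}_1) \circ \cdots \circ L(\mathbf{j}_k)$ to have a unique simple quotient, which I define (up to grading shift) to be $L(\mu)$; every other composition factor $M'$ must satisfy $\max(M') < \mu$. In particular $\max(L(\mu)) = \mu$, so the assignment $\mu \mapsto L(\mu)$ is injective on isomorphism classes up to grading shift. For surjectivity, given any simple $M$ with $\max(M) = \mu'$, I would decompose $\mu'$ as a product of Lyndon words in weakly decreasing order $\mu' = \mathbf{j}'_1 \cdots \mathbf{j}'_{\ell}$ and use Frobenius reciprocity applied to the nonzero weight space $e(\mu') \cdot M$ to produce a nonzero morphism $L(\mathbf{j}'_1) \circ \cdots \circ L(\mathbf{j}'_{\ell}) \twoheadrightarrow M$; the uniqueness of the simple quotient then forces $M \cong L(\mu')$ and, as a by-product, forces each $\mathbf{j}'_i$ to be a good Lyndon word, so that $\mu' \in \M$.

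The hard part is the shuffle/multiplicity computation and its precise compatibility with the behaviour of $\max$ under the convolution product: this is the step where the specific features of the KLR generators and the Lyndon-word combinatorics developed by Leclerc interact non-trivially. Once this input is secured, the rest of the argument is a formal induction on the height. A subtle but mostly bookkeeping point is that the entire parametrization depends on the fixed total order on $I$, which enters the definitions of the lexicographic order, of $\mathcal{GL}$, and of $\max$; one should track this dependence throughout, but it does not affect the structure of the proof.
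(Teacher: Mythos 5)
The paper does not prove this statement; it is quoted directly from Kleshchev--Ram \cite{KR} with a citation, so there is no internal proof to compare against. Your sketch does follow the broad lines of the argument in the literature: cuspidal modules as the base case, the quantum shuffle formula to control $\max$ under convolution, the combinatorial fact that the concatenation is the maximal shuffle of a weakly decreasing sequence of Lyndon words, a unique simple head, and Frobenius reciprocity for surjectivity. The injectivity half and the identity $\max(L(\mu))=\mu$ are handled correctly.

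The surjectivity half, however, contains a circularity. Given a simple $M$ with $\max(M)=\mu'$, the canonical Lyndon factorization $\mu'=\mathbf{j}'_1\cdots\mathbf{j}'_\ell$ a priori produces Lyndon words $\mathbf{j}'_i$, but you have no guarantee that these lie in $\mathcal{GL}$. Yet the morphism $L(\mathbf{j}'_1)\circ\cdots\circ L(\mathbf{j}'_\ell)\twoheadrightarrow M$ that you want to extract by Frobenius reciprocity only makes sense once each $\mathbf{j}'_i$ is known to be a \emph{good} Lyndon word, since $L(\mathbf{j}'_i)=S_{\wt(\mathbf{j}'_i)}$ is only defined for $\mathbf{j}'_i\in\mathcal{GL}$. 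You cannot obtain the goodness of the $\mathbf{j}'_i$ ``as a by-product'' of an argument that already presupposes it. In Kleshchev--Ram this is a separate, nontrivial lemma (that the canonical factorization of the highest word of any simple module is into good Lyndon words), proved by analyzing the restriction of $M$ to the relevant parabolic and by an extremality argument that is genuinely independent of the rest of your induction. An alternative, cleaner route that avoids the lemma altogether is a dimension count: once injectivity is established, note that dominant words of weight $\beta$ are in bijection with Kostant partitions of $\beta$, and by the Khovanov--Lauda/Rouquier categorification theorem (Theorem~\ref{firstthmKLR}) this equals the number of isomorphism classes of simples in $R(\beta)-gmod$, forcing surjectivity. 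As written, though, your surjectivity step has a gap that needs one of these two repairs.
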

  
   With the previous notations one has in particular $L(\mathbf{j}) = S_{\wt(\mathbf{j})}$ for every $\mathbf{j} \in \mathcal{GL}$. The elements of $\M$ are called \textit{dominant words}. 
 
  \bigskip
  
 This framework allows one to compute products of modules in $R-gmod$ via the \textit{quantum shuffle product formula}. First, recall that the \textit{graded character} of a module $M$ in $R(\beta)-gmod$ is by definition a formal sum of words with coefficients in $\mathbb{Z}_{\geq 0}[q^{\pm 1}]$ given by
  $$ ch_q(M) := \sum_{\mathbf{j} \in \text{Seq}(\beta)} \dim_q(e(\mathbf{j}) \cdot M) \mathbf{j} . $$
 Recall also that the \textit{quantum shuffle} of two words $\mathbf{j},\mathbf{j}'$ is defined as follows: write $\mathbf{j} = (j_1, \ldots , j_r) , \mathbf{j}' = (j_{r+1} , \ldots , j_{r+s})$.
 Then 
 $$ \mathbf{j} \circ \mathbf{j}' := \sum_{\mathbf{j}'' \in \mathbf{j} \shuffle \mathbf{j}'} q^{-\epsilon_{\mathbf{j}''}} \mathbf{j}'' $$
where
 $$ \mathbf{j} \shuffle \mathbf{j}' := \{ (j_{\sigma^{-1}(1)} , \ldots, j_{\sigma^{-1}(r+s)})  \mid \sigma \in \Sigma_{r+s} , \sigma(1) < \cdots < \sigma(r) ,  \sigma(r+1) < \cdots < \sigma(r+s) \} $$
  and for every $\mathbf{j}'' \in \mathbf{j} \shuffle \mathbf{j}'$, the integer $\epsilon_{\mathbf{j}''}$ is defined by 
$$ \epsilon_{\mathbf{j}''}  := \sum_{\substack{ k \leq r < l \\ \sigma(k) > \sigma(l)}} j_k \cdot j_l \quad \text{for $\sigma$ such that $\mathbf{j}''=(j_{\sigma^{-1}(1)} , \ldots, j_{\sigma^{-1}(r+s)})$.} $$
 Then one can extend by linearity the quantum shuffle product $\circ$ to any (finite) formal sum of words. In particular given two modules $M,N$ in $R-gmod$ one can define $ch_q(M) \circ ch_q(N)$ as 
 $$ ch_q(M) \circ ch_q(N) = \sum_{\substack{\mathbf{j},\mathbf{j}' \\ e(\mathbf{j}) \cdot M \neq 0 \\ e(\mathbf{j}') \cdot N \neq 0}}  \dim_q(e(\mathbf{j}) \cdot M)  \dim_q(e(\mathbf{j}') \cdot N) \quad (\mathbf{j} \circ \mathbf{j}') . $$
 Then one has the quantum shuffle product formula:
 
 \begin{prop}[{{\cite[Lemma 2.20]{KL}}}]  \label{quantumshuffle}
 For every pair of objects $M,N$ in $R-gmod$, one has
 $$ ch_q(M \circ N) = ch_q(M) \circ ch_q(N)  $$
 where the symbol $\circ$ on the left hand side refers to the monoidal product in $R-gmod$. 
 \end{prop}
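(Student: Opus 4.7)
The plan is to compute $ch_q(M \circ N)$ directly from the construction of the monoidal product as a parabolic induction, and to identify the resulting combinatorial sum with the quantum shuffle product.

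Set $r = \hgt(\beta)$ and $s = \hgt(\gamma)$ for $M \in R(\beta)-gmod$ and $N \in R(\gamma)-gmod$. By construction,
$$ M \circ N \;=\; R(\beta+\gamma)\, e(\beta,\gamma) \otimes_{R(\beta) \otimes R(\gamma)} (M \boxtimes N) $$
(up to an overall grading shift), where $e(\beta,\gamma)$ is the sum of all concatenation idempotents $e(\mathbf{j}\mathbf{j}')$ with $\mathbf{j} \in \text{Seq}(\beta)$ and $\mathbf{j}' \in \text{Seq}(\gamma)$. The first step is to invoke the Khovanov--Lauda Mackey-type basis theorem: $R(\beta+\gamma)\, e(\beta,\gamma)$ is free as a right $R(\beta) \otimes R(\gamma)$-module, with a basis $\{\tau_\sigma\, e(\beta,\gamma)\}$ indexed by the set of minimal-length coset representatives for $\Sigma_r \times \Sigma_s \backslash \Sigma_{r+s}$, i.e.\ by $(r,s)$-shuffles $\sigma$. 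On idempotents, $\tau_\sigma\, e(\mathbf{j}\mathbf{j}') = e(\sigma \cdot (\mathbf{j}\mathbf{j}'))\, \tau_\sigma$, and as $\sigma$ varies the word $\sigma \cdot (\mathbf{j}\mathbf{j}')$ ranges over all shuffles of the words $\mathbf{j}$ and $\mathbf{j}'$ appearing in the set $\mathbf{j} \shuffle \mathbf{j}'$ of the definition above.

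Fixing a target word $\mathbf{j}'' \in \text{Seq}(\beta+\gamma)$, the graded dimension $\dim_q e(\mathbf{j}'')(M \circ N)$ decomposes as a sum over triples $(\mathbf{j}, \mathbf{j}', \sigma)$ with $\sigma \cdot (\mathbf{j}\mathbf{j}') = \mathbf{j}''$, each contributing $\dim_q(e(\mathbf{j}) M) \cdot \dim_q(e(\mathbf{j}') N) \cdot q^{\deg \tau_\sigma}$. Writing $\sigma$ as a minimal product of adjacent transpositions and summing the degrees $-j_k \cdot j_{k+1}$ contributed by the generators $\tau_k$, one checks that, thanks to the braid/quadratic KLR relations and the shuffle-minimality of $\sigma$, only the inversions of $\sigma$ that cross the $r|s$-boundary actually contribute, which recovers precisely the exponent $\epsilon_{\mathbf{j}''}$ appearing in the definition of $\mathbf{j} \circ \mathbf{j}'$. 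Summing over $(\mathbf{j}, \mathbf{j}', \sigma)$ and regrouping by $\mathbf{j}''$ yields exactly $ch_q(M) \circ ch_q(N)$.

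The main obstacle is the Mackey basis theorem combined with the precise tracking of the grading. The freeness statement itself is a classical consequence of the defining relations of $R(\beta+\gamma)$, in particular the fact that the commutation rules between the $x$'s and the $\tau$'s produce only strictly-lower-length correction terms, which allows a triangular argument with respect to the Bruhat order. Once the basis is in place, the identification of $\deg \tau_\sigma$ with $\epsilon_{\mathbf{j}''}$ follows by induction on the length of $\sigma$ using the braid relations to move redundant crossings away; the remainder of the argument is bookkeeping, matching the algebraic induction formula with the combinatorial shuffle sum.
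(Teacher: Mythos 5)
The paper does not include a proof of this proposition: it is cited directly from Khovanov--Lauda [KL, Lemma~2.20]. Your reconstruction via the basis theorem for parabolic induction (freeness of $R(\beta+\gamma)e(\beta,\gamma)$ as a right $R(\beta)\otimes R(\gamma)$-module with basis $\{\tau_\sigma\}$ indexed by minimal-length coset representatives, i.e.\ $(r,s)$-shuffles) is indeed the standard argument from that reference. One small remark: your phrasing ``only the inversions of $\sigma$ that cross the $r|s$-boundary actually contribute, thanks to the braid/quadratic KLR relations'' and ``using the braid relations to move redundant crossings away'' overcomplicates the degree bookkeeping — for a \emph{minimal} coset representative $\sigma$ of $\Sigma_r\times\Sigma_s\backslash\Sigma_{r+s}$, \emph{every} inversion of $\sigma$ crosses the boundary by construction, so no KLR relations are needed at this step; the identity $\deg(\tau_\sigma e(\mathbf{j}\mathbf{j}'))=-\epsilon_{\mathbf{j}''}$ is immediate once a reduced word for $\sigma$ is chosen, since each transposition in it swaps one letter from $\mathbf{j}$ with one from $\mathbf{j}'$ and contributes $-j_k\cdot j_l$ for the corresponding crossing pair.
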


  \subsection{Determinantal modules}
  \label{reminddeterm}
  
   In this subsection we make some reminders about determinantal modules in $R-gmod$  following \cite{KKKO}, adapting a former construction due to Geiss-Leclerc-Schr\"oer \cite{GLS}. These constructions are also valid  more generally in any $\Cw , w \in W$ but here we will use them only for $R-gmod = \C_{w_0}$ where $w_0$ stands for the longest element of the Weyl group $W$ associated to $\mathfrak{g}$. 
   
   \bigskip
   
  Geiss-Leclerc-Schr\"oer \cite{GLS} constructed a family of distinguished seeds for the cluster structures of $\mathbb{C}[N]$. Their construction actually involves the quantum cluster structures on the quantum coordinate ring $\Aqn$ but here we will always work in the classical setting.  Geiss-Leclerc-Schr\"oer consider certain elements $D(u \lambda , v \lambda)$ of $\mathbb{C}[N]$ called \textit{(quantum) unipotent minors} parametrized by triples $(\lambda,u,v)$ consisting of a dominant weight $\lambda$ and a couple $(u,v)$ of elements of $W$. These unipotent minors always belong to the dual canonical basis of $\mathbb{C}[N]$ when they are not zero (see for instance {{\cite[Lemma 9.1.1]{KKKO}}}).  These elements satisfy certain remarkable relations called \textit{determinantal identities} (see {{\cite[Proposition 5.4]{GLS}}}). Note that such identities already appear in the work of Fomin-Zelevinsky \cite{FZJAMS}.  A particularly interesting family of determinantal identities are obtained by considering certain special unipotent minors, obtained by taking $\lambda$ to be a fundamental weight $\omega_i , i \in I$ together with Weyl groups elements  of the form $u=s_{i_1} \cdots s_{i_k}$ and $v=s_{j_1} \cdots s_{j_l}$ such that $i_k = j_l = i$ and $i_p = j_p$ if $p \leq \min(k,l)$. 
  Geiss-Leclerc-Schr\"oer \cite{GLS} prove that the determinantal identities relating these unipotent minors can be interpreted as exchange relations associated to cluster mutations in $\mathbb{C}[N]$.

  Geiss-Leclerc-Schr\"oer construct a family of seeds $\{ \s^{\mathbf{i}} , \mathbf{i} \in \text{Red}(w_0) \}$ in $\mathbb{C}[N]$, called \textit{standard seeds}, indexed by the reduced expressions of $w_0$. For each $\mathbf{i} = (i_1, \ldots , i_N) \in \text{Red}(w_0)$, the quiver of the seed $\s^{\mathbf{i}}$ can be constructed from $\mathbf{i}$ as explained in Section~\ref{propag} below. The cluster variables of the seed $\s^{\mathbf{i}}$ are the unipotent minors
  $$ D(s_{i_1} \cdots s_{i_k} \omega_{i_k} , \omega_{i_k}) \quad 1 \leq k \leq N .  $$ These minors  are called \textit{(quantum) flag minors}. The determinantal identities between flag minors is crucial in the study of the cluster  structure of $\mathbb{C}[N]$.  Note that the rank of this cluster structure  (i.e. the number of cluster variables in each seed of $\mathbb{C}[N]$) is equal to the length of $w_0$ or equivalently to the number of positive roots of $\mathfrak{g}$. 
  The results of \cite{GLS}  mainly rely on additive categorification techniques using representations of the preprojective algebra. Kang-Kashiwara-Kim-Oh \cite{KKKO} adapted this construction to the monoidal setting  by lifting the non zero unipotent minors of $\mathbb{C}[N]$ to $R-gmod$ via the isomorphism of Theorem~\ref{firstthmKLR}. The modules obtained this way are unique (up to isomorphism and grading shift) and are called \textit{determinantal modules}. As the non zero unipotent minors are always elements of the dual canonical basis, it follows from  Theorem~\ref{secondthmKLR} that the determinantal modules are simple. They are also known to be \textit{real} in the sense of Hernandez-Lelcerc \cite{HL} (see  {{\cite[Lemma 10.2.2]{KKKO}}}). 
  
  In particular, for each $\mathbf{i} = (i_1 , \ldots , i_N) \in \text{Red}(w_0)$ and for every $1 \leq k \leq N$,  we denote by $M_k^{\mathbf{i}}$ the determinantal module defined by (up to isomorphism and shift)
  $$ [M_k^{\mathbf{i}}] =  D(s_{i_1} \cdots s_{i_k} \omega_{i_k} , \omega_{i_k}) $$
  where $[M]$ denotes the class of $M$ in $K_0(R-gmod)$.
One of the main results due to Kang-Kashiwara-Kim-Oh ({{\cite[Theorem 11.2.2]{KKKO}}})  is to prove that the datum the modules $M_1^{\mathbf{i}}, \ldots  M_N^{\mathbf{i}}$ together with the quiver $Q^{\mathbf{i}}$ forms an monoidal seed admitting successive monoidal mutations in any exchangeable directions (in the sense of {{\cite[Definitions 6.2.1 and 6.2.3]{KKKO}}}). This allows them to prove that $R-gmod$ is a \textit{monoidal categorification} in the sense of Hernandez-Leclerc \cite{HL} of the cluster structure of $\mathbb{C}[N]$ (as well as analogous statements for each of the unipotent cells $\mathbb{C}[N(w)] , w \in W$).
  $$
  \xymatrixcolsep{0.3pc} \xymatrix{ R-gmod \ar[d] & \ni & (M_1^{\mathbf{i}}, \ldots  M_N^{\mathbf{i}}) \ar@{|->}[d] & \quad \text{determinantal modules associated to $\mathbf{i}$} \\
      \mathbb{C}[N] & \ni & (x_1^{\mathbf{i}}, \ldots, x_N^{\mathbf{i}}) & \quad \text{flag minors of the seed $\s^{\mathbf{i}}$} .}
      $$ 
  By {{\cite[Proposition 10.2.4]{KKKO}}}, the determinantal modules can be constructed inductively by successive applications of the induction functors categorifying the crystal structure of the dual canonical basis of $\mathbb{C}[N]$. In \cite{Casbi2}, we used Kashiwara-Kim's results \cite{KK} to provide a combinatorial description of the determinantal modules appearing in the seeds 
  $$  \s^{\mathbf{i}}  , \text{$\mathbf{i}$ corresponding to a total ordering on simple roots} . $$
  This description uses Kleshchev-Ram's parametrization via dominant words recalled in the previous subsection. Fix a labeling $I$ of the set of simple roots of $\mathfrak{g}$ and let $<$ be any arbitrary order on $I$. We still denote by $<$ the induced lexicographic order $<$ on the set $\mathcal{GL}$  of good Lyndon words (see Section~\ref{remindKLR3}) . It yields an order on the set of positive roots $\Phi_{+}$ via the bijection~\eqref{bijGL}. By the results of Rosso \cite{Rosso} this order turns out to be  a convex order (see {{\cite[Proposition 26]{Leclerc}}}).
  Let ${\bf i}_{<}$ denote the corresponding reduced expression of $w_0$
   and consider the seed $\mathcal{S}^{\mathbf{i}_{<}}$ in $\mathbb{C}[N]$. 
  Note that different orderings on $I$ may give the same seed. Moreover certain of the seeds $\mathcal{S}^{{\bf i}}$ may not come from any ordering $<$. 
  
  \begin{ex}
  Let $\mathfrak{g}$ be of type $A_3$. Then there are 14 seeds in $\mathbb{C}[N]$, 6 of them being of the form $\mathcal{S}^{{\bf i}}$. Among these, 5 of them are of the form $\mathcal{S}^{{\bf i}_{<}}$. 
  \end{ex}
  
  We write the reduced expression $\mathbf{i}_{<}$ of $w_0$ as $\mathbf{i}_{<} = (i_1, \ldots , i_N)$ (with $N := l(w_0)$). The determinantal modules of the seed $\mathcal{S}^{\mathbf{i}_{<}}$ can be described in terms of dominant words as follows 
   
    \begin{thm}[{{\cite[Theorem 3.7]{Casbi2}}}] \label{thmcasbi2}
 Let $(x_1, \ldots , x_N)$ denote the cluster variables of the seed $\mathcal{S}^{\mathbf{i}_{<}}$ and let $\mu_k \in \M$ denote the dominant word such that $x_k=[L(\mu_k)]$ for every $1 \leq k \leq N$. Write
 $$ \mu_k := ({\bf j}_{N})^{c_N} \cdots  ({\bf j}_{1})^{c_1} . $$
 Then the tuple  $(c_1, \ldots , c_N)$ is given by 
 $$ c_j =  \begin{cases}
        1  \quad  & \text{if $j \leq k$ and $i_j=i_k$} \\
        0 \quad &\text{otherwise.}
        \end{cases} $$
 \end{thm}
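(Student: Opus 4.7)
The plan is to proceed by induction on $k \in \{1,\ldots,N\}$, comparing $\mu_k$ with the candidate dominant word
\[
  \nu_k := \mathbf{j}_{j_r}\mathbf{j}_{j_{r-1}}\cdots\mathbf{j}_{j_1},
\]
where $j_1 < j_2 < \cdots < j_r = k$ enumerates the indices $j \leq k$ satisfying $i_j = i_k$. The starting point is Rosso's theorem (see \cite[Proposition 26]{Leclerc}): the lexicographic order on $\mathcal{GL}$ agrees, via $\mathbf{j} \mapsto \wt(\mathbf{j})$, with the convex order on $\Phi_+$ coming from $\mathbf{i}_{<}$; equivalently, $\mathbf{j}_j < \mathbf{j}_{j'}$ if and only if $j < j'$. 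This ensures that $\nu_k$ is a legitimate dominant word (its factors are listed in strictly decreasing lex order), and the telescoping computation $\omega_{i_k} - s_{i_1}\cdots s_{i_k}\omega_{i_k} = \sum_{s=1}^{r}\beta_{j_s}$ shows $\wt(\nu_k) = \wt(x_k)$, a first consistency check.

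For $k=1$, the set $\text{Seq}(\alpha_{i_1}) = \{(i_1)\}$ is a singleton, forcing $\mu_1 = (i_1) = \mathbf{j}_1$. For the inductive step, let $k^- := \max\{j < k : i_j = i_k\}$, with the convention $k^- = 0$ and $M_0^{\mathbf{i}} = \mathbf{k}$ if this set is empty. The key intermediate claim is an isomorphism
\[
  M_k^{\mathbf{i}} \simeq \text{hd}\bigl(S_{\beta_k} \circ M_{k^-}^{\mathbf{i}}\bigr)
\]
up to grading shift. Granting this, the induction concludes as follows. By the inductive hypothesis $\max(M_{k^-}^{\mathbf{i}}) = \nu_{k^-}$, while $\max(S_{\beta_k}) = \mathbf{j}_k$, and the inequality $\mathbf{j}_k > \mathbf{j}_{j_{r-1}}$ in lex order (since $j_{r-1} < k$) guarantees that $\mathbf{j}_k \cdot \nu_{k^-} = \nu_k$ is itself a dominant word. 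Applying Proposition~\ref{quantumshuffle} together with the standard fact that the concatenation of two words in descending Lyndon-factored form realizes the lex-maximum of their shuffle, one obtains $\max\bigl(S_{\beta_k} \circ M_{k^-}^{\mathbf{i}}\bigr) = \nu_k$. Passing to the head and invoking the Kleshchev-Ram classification identifies $M_k^{\mathbf{i}}$ with $L(\nu_k)$, completing the induction.

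The main obstacle is the module-theoretic recursion $M_k^{\mathbf{i}} \simeq \text{hd}(S_{\beta_k} \circ M_{k^-}^{\mathbf{i}})$. Morally this is a categorification of one of the determinantal identities of \cite{GLS}, and the weight equation $\wt(S_{\beta_k}) + \wt(M_{k^-}^{\mathbf{i}}) = \wt(M_k^{\mathbf{i}})$ is a necessary consistency check, but it is far from sufficient. The natural tools are \cite[Proposition 10.2.4]{KKKO}, which builds the sequence $M_1^{\mathbf{i}}, \ldots, M_N^{\mathbf{i}}$ via iterated categorified dual Kashiwara operators, together with Kashiwara-Kim's analysis \cite{KK} of R-matrices between cuspidal modules along a convex order. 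The delicate point to verify is that, when the convex order on $\Phi_+$ arises from a total ordering on $I$, the categorified operators attached to the intermediate positions $k^- < j < k$ with $i_j \neq i_k$ do not interfere with the $i_k$-string, so that the net effect of passing from $M_{k^-}^{\mathbf{i}}$ to $M_k^{\mathbf{i}}$ is precisely a parabolic induction by the cuspidal module $S_{\beta_k}$. Once this structural statement is established, the quantum shuffle computation above closes the induction.
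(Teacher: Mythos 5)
This theorem is quoted from your earlier paper \cite[Theorem 3.7]{Casbi2}; the present text cites it without giving a proof, so there is no in-paper argument to compare against, and I assess the proposal on its own merits. The preliminaries are sound: the compatibility between the lexicographic order on $\mathcal{GL}$ and the convex order from $\mathbf{i}_{<}$, the observation that $\nu_k$ is a genuine dominant word, and the weight identity $\wt(\nu_k)=\omega_{i_k}-s_{i_1}\cdots s_{i_k}\omega_{i_k}$ are all correct. The closing shuffle step is also essentially right, though you should cite the precise result rather than appeal to a ``standard fact'': what is needed is Kleshchev--Ram's theorem that if $\mathbf{j}$ is a good Lyndon word dominating every Lyndon factor of a dominant word $\mu$, then $\max(L(\mathbf{j})\circ L(\mu))=\mathbf{j}\mu$ and the head is $L(\mathbf{j}\mu)$. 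The assertion that the concatenation of the two $\max$ weights is the $\max$ of the entire shuffle product of characters is exactly the nontrivial content of that theorem, not a formality.

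The genuine gap is the recursion $M_k^{\mathbf{i}}\simeq\text{hd}\bigl(S_{\beta_k}\circ M_{k^-}^{\mathbf{i}}\bigr)$, which you state, correctly identify as the main obstacle, and then leave unproved. This recursion \emph{is} the theorem; the Lyndon-word bookkeeping around it is routine. The references you point to do not deliver it as stated. \cite[Proposition 10.2.4]{KKKO} produces $M_k^{\mathbf{i}}$ from $M_{k-1}^{\mathbf{i}}$ one categorified crystal operator at a time, and converting that step-by-step description into a single parabolic induction $\text{hd}(S_{\beta_k}\circ-)$ that jumps from position $k^-$ to $k$ requires controlling what the operators attached to the intermediate indices $j$ with $i_j\neq i_k$ do under convolution with $S_{\beta_k}$. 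Your phrase ``do not interfere with the $i_k$-string'' names the difficulty without resolving it. The actual input is Kashiwara--Kim's analysis \cite{KK} of determinantal modules and R-matrices along a convex order (in particular the fact, recorded in Remark~\ref{rkBalgo} above via \cite[Proposition 3.14]{KK}, that the $M_k^{\mathbf{i}}$ with $k_-=0$ are cuspidal), and you need to extract from it a precise statement yielding the recursion. Verifying the recursion in type $A_2$, $A_3$, or $D_4$ as a consistency check is useful but is not a substitute for the argument. As written, this is a plausible outline with its central lemma asserted rather than established.
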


\section{Homogeneous modules over quiver Hecke algebras}
\label{remindKRHom3}

 This section is devoted to some reminders about Kleshchev-Ram's construction of homogeneous simple modules over quiver Hecke algebras of finite simply-laced type. These constitute a finite class of simple objects in $R-gmod$ characterized by their being concentrated in a single degree (which explains the terminology \textit{homogeneous}). They are classified by the rich combinatorics of fully-commutative elements of Weyl groups. We begin by recalling  several known  combinatorial properties of these elements after the works of Peterson-Proctor \cite{P1,P2}, Stembridge \cite{Stem} and Nakada \cite{Nakada}. 

 \subsection{Combinatorics of fully-commutative elements of Weyl groups}
 \label{fullcom}
 
 We recall known combinatorial facts about several remarkable classes of elements of Weyl groups, namely the fully-commutative, minuscule and dominant minuscule elements. Throughout this section $W$ will stand for the Weyl group associated to a Lie algebra $\mathfrak{g}$ of finite Dynkin type (but not necessarily simply-laced). We let $\{s_i , i \in I \}$ denote the set of the simple reflections of $W$, where $I$ is a finite index set.  We also let $P$ (resp. $P^{\vee}$) denote the weight lattice (resp. the coweight lattice) of $\mathfrak{g}$. We denote by $\alpha_i, i \in I$ (resp. $\alpha_i^{\vee}, i \in I$) the simple roots (resp. the simple coroots) and we let $\langle \cdot , \cdot \rangle$ denote  the bilinear form on $P^{\vee} \times P$ such that $\langle \alpha_i^{\vee} , \alpha_j \rangle = i \cdot j$ for every $i,j \in I$. 
 
  \bigskip
  
 The following definition is due to Stembridge \cite{Stem} generalizing definitions  introduced by Peterson-Proctor relying on previous works by Proctor \cite{P1,P2}. 
 
  \begin{deftn}  \label{definitionfullcom}
  An element $w \in W$ is said to be fully-commutative if for any pair $(i,j) \in I$ such that $i \cdot j \neq 0$, there is no reduced expression of $w$ containing a subword of the form $(i,j,i,j, \ldots )$ of length $m$, where $m$ is the order of $s_is_j$ in $W$. 
  \end{deftn} 
  
  Note that if an element $w \in W$ admits a reduced expression satisfying this property, then it is also the case for every reduced expression of $w$. In the case of a simply-laced Weyl group, the notion of fully-commutative is equivalent to requiring that no reduced expression of $w$ should contain a subword of braid form $(i,j,i)$ with $i \cdot j = -1$. In other words, all the reduced expressions of $w$ can be recovered from one given reduced expression by only performing changes of the form $(\ldots , k , l , \ldots ) \rightarrow (\ldots , l , k , \ldots )$ for $k,l \in I$ such that $k \cdot l = 0$. 
  
  \bigskip

  One now defines minuscule and dominant minuscule elements of $W$.
  
   \begin{deftn}  \label{defminuscule}
   An element $w \in W$ is said to be minuscule (resp. dominant minuscule) if there exists an integral weight  $\lambda \in P$ (resp. a dominant  integral weight $\lambda \in P^{+}$) such that for any reduced expression $(j_1, \ldots , j_N)$ of $w$, one has 
$$ \langle \alpha_{j_{k}}^{\vee} , s_{j_{k+1}} \cdots s_{j_N} \lambda \rangle  = 1 $$
for every $1 \leq k \leq N$. 
\end{deftn}

 As before one can replace "for any reduced expression" by "there exists a reduced expression" in this definition (see {{\cite[Propositions 2.1]{Stem}}}). Stembridge \cite{Stem} gave the following very useful classification of minuscule and dominant minuscule elements of $W$. We use the following notation from \cite{GLS}: if $ \mathbf{j} = (j_1, \ldots , j_N)$ is a reduced expression of an element $w \in W$, then for every $1 \leq k \leq N$ we set 
  $$ k_{+} := \min \left( \{ k < l \leq N  \mid j_l = j_k \} \cup \{ N+1 \} \right), $$
  $$ k_{-} := \max \left( \{ l < k \leq N  \mid j_l = j_k \} \cup \{ 0 \} \right), $$
  In other words $k_{+}$ (resp. $k_{-}$) is the position in $\mathbf{i}$ of the next (resp. previous) occurrence of the letter $i_k$ and one has $k_{+}=N+1$ (resp. $k_{-}=0$) if and only if $k$ is the position of the last (resp. first) occurrence of the letter $j_k$ in $\mathbf{j}$. 
 
  \begin{thm}[{{\cite[Propositions 2.3, 2.5]{Stem}}}]  \label{Stembridge}
 Let $w \in W$ and fix a reduced expression $(j_1, \ldots , j_N)$ of $w$. 
  \begin{itemize}
  \item The element $w$ is minuscule if and only if for every $1 \leq k \leq N$, one has 
 $$ k_{+}  \leq N \quad \Rightarrow \quad \sum_{k<l<k_{+}} j_k \cdot j_l = -2 . $$   
   \item The element $w$ is dominant minuscule if and only if it is minuscule and in addition, one has for every $1 \leq k \leq N$,
$$ k_{+} = N+1 \quad \Rightarrow \quad \sum_{l>k} j_k \cdot j_l \geq -1 . $$
  \end{itemize}
   \end{thm}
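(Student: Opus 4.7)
The plan is to translate the minuscule condition, which is expressed in terms of the partial products $s_{j_{k+1}} \cdots s_{j_N} \lambda$, into a purely combinatorial condition on the reduced expression $(j_1, \ldots, j_N)$, by working with a single auxiliary weight $\lambda$ and exploiting the linearity of the Cartan pairing. I would introduce the notation $\lambda_k := s_{j_{k+1}} \cdots s_{j_N} \lambda$ for $0 \leq k \leq N$, so that $\lambda_N = \lambda$ and $\lambda_{k-1} = s_{j_k} \lambda_k = \lambda_k - \langle \alpha_{j_k}^\vee, \lambda_k \rangle \alpha_{j_k}$. The minuscule condition is precisely $\langle \alpha_{j_k}^\vee, \lambda_k \rangle = 1$ for all $k$.

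Assuming this condition, the recursion collapses to $\lambda_{k-1} = \lambda_k - \alpha_{j_k}$, and iterating backwards from $\lambda_N = \lambda$ yields
$$ \lambda_k = \lambda - \sum_{l > k} \alpha_{j_l}, \qquad \langle \alpha_{j_k}^\vee, \lambda \rangle = 1 + \sum_{l > k} j_k \cdot j_l. $$
If $k < k'$ are two positions with $j_k = j_{k'} = i$, then the two resulting values of $\langle \alpha_i^\vee, \lambda \rangle$ must agree, forcing $\sum_{k < l \leq k'} j_k \cdot j_l = 0$. Specializing to $k' = k_+$ and isolating the term $l = k_+$ (which contributes $j_k \cdot j_{k_+} = 2$) gives exactly Stembridge's condition $\sum_{k < l < k_+} j_k \cdot j_l = -2$. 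For the converse, assume Stembridge's condition at every $k$ with $k_+ \leq N$; telescoping shows the quantities $1 + \sum_{l>k} j_k \cdot j_l$ depend only on the letter $j_k$, so one can define $\lambda$ by prescribing $\langle \alpha_i^\vee, \lambda \rangle$ to be this common value for each $i$ appearing in $\mathbf{j}$ (and arbitrarily for other $i$), then verify the minuscule condition by descending induction on $k$, using the identity above to compute $\langle \alpha_{j_k}^\vee, \lambda_k \rangle$.

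For the dominant minuscule refinement, the additional requirement is $\lambda \in P^+$. Letters $i \in I$ that do not appear in $\mathbf{j}$ impose no constraint since $\langle \alpha_i^\vee, \lambda \rangle$ can be set to $0$; for letters $i$ that do appear, the cleanest formula is obtained by taking $k$ to be the \emph{last} occurrence, i.e. the position with $k_+ = N+1$, where the sum $\sum_{l > k} j_k \cdot j_l$ runs only over letters to the right of the last appearance of $i$. The condition $\langle \alpha_i^\vee, \lambda \rangle \geq 0$ then becomes $\sum_{l > k} j_k \cdot j_l \geq -1$, giving the second bullet.

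The main technical point to watch is the telescoping step showing that Stembridge's condition between consecutive occurrences $k$ and $k_+$ suffices to force agreement of $1 + \sum_{l>k} j_k\cdot j_l$ across \emph{all} occurrences of the same letter; this is what makes the global construction of $\lambda$ in the converse direction well-defined. A secondary point, used implicitly but discussed in \cite{Stem}, is the independence of the condition on the choice of reduced expression, which reduces by Matsumoto's theorem to checking invariance under a commutation move $(\ldots,k,l,\ldots) \leftrightarrow (\ldots,l,k,\ldots)$ with $k \cdot l = 0$; this is immediate from the formula for $\langle \alpha_{j_k}^\vee, \lambda \rangle$ since exchanging two commuting letters does not alter any of the partial sums $\sum_{l>k} j_k \cdot j_l$ up to the reindexing forced by the swap.
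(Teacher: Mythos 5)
The paper does not prove this theorem; it is quoted from Stembridge's paper as Propositions 2.3 and 2.5 of \cite{Stem}, so there is no in-paper argument to compare against. Your reconstruction is correct and is essentially Stembridge's own argument: the recursion $\lambda_{k-1}=\lambda_k-\alpha_{j_k}$ collapses the minuscule condition to the identity $\langle\alpha_{j_k}^\vee,\lambda\rangle = 1+\sum_{l>k} j_k\cdot j_l$; comparing two occurrences of the same letter gives the telescoped relation $\sum_{k<l\le k_+} j_k\cdot j_l=0$, which upon isolating the term $l=k_+$ (contributing $j_k\cdot j_{k_+}=2$) is precisely Stembridge's $-2$ criterion; and the converse is obtained by using the same telescoping to show the assignment $i\mapsto 1+\sum_{l>k}j_k\cdot j_l$ is independent of which occurrence $k$ of $i$ one picks, hence defines a valid $\lambda$, after which the minuscule condition follows by descending induction. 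The dominant refinement is also handled correctly: evaluating at the last occurrence turns $\langle\alpha_i^\vee,\lambda\rangle\ge 0$ into the stated inequality, and setting $\langle\alpha_i^\vee,\lambda\rangle=0$ for the letters absent from $\mathbf{j}$ makes $\lambda$ dominant.

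One caveat on your closing paragraph: you assert that the independence of the criterion on the choice of reduced expression "reduces by Matsumoto's theorem to checking invariance under a commutation move." Matsumoto's theorem only says that reduced expressions are connected by arbitrary braid moves; restricting to commutation moves alone is valid precisely when $w$ is fully commutative, which is a consequence of, not an input to, the theorem you are proving. As written that remark is circular. It is harmless to the main argument because, as the paper notes just after Definition~\ref{defminuscule} (citing Proposition 2.1 of \cite{Stem}), one may replace "for any reduced expression" by "there exists a reduced expression" in the definition, and your construction of $\lambda$ verifies the condition for the single given expression, which therefore suffices. It would be cleaner to drop the Matsumoto aside and instead observe that reduced-expression-invariance of the combinatorial criterion is an a posteriori consequence of its equivalence with the intrinsic property of being minuscule.
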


    \begin{rk}
     \begin{enumerate}
   \item  It is proven by Stembridge \cite{Stem} that minuscule (and a fortiori dominant minuscule) elements are fully-commutative. 
     \item  If $\mathfrak{g}$ is of type $A_n, n \geq 1$ then one can show that every fully-commutative element is in fact minuscule so that these two notions coincide. But in other types, the notions of fully-commutative, minuscule and dominant minuscule elements are distinct. 
     \end{enumerate}
  \end{rk} 
  
   \begin{ex} 
   In type $A_3$, the elements $s_1s_2s_3$ and $s_2s_1s_3s_2$ are dominant minuscule. The element $s_2s_3s_1$ is minuscule but not dominant minuscule. The element $s_3s_2s_1s_2$ is not fully-commutative. 
   
    In type $D_4$ with $3$ being the trivalent node of the Dynkin diagram, the element $s_3s_1s_2s_4s_3$ is fully-commutative but is not minuscule (and a fortiori not dominant minuscule). 
    \end{ex}
    
    We will  respectively denote by $\mathcal{FC}, \mathcal{M}in, \mathcal{M}in^{+}$ the sets of fully-commutative, minuscule, and dominant minuscule elements of $W$. 
 
 \subsection{Colored hook formulas}
 
 In this paragraph we present the most remarkable properties of dominant minuscule elements of Weyl groups, namely the Peterson-Proctor hook formula as well a generalized version established by Nakada \cite{Nakada} called \textit{colored} Peterson-Proctor hook formula. 
 
  We still consider the Weyl group $W$ of a finite type Lie algebra $\mathfrak{g}$ and we fix $w$ a dominant minuscule element of $W$. We also let $N := l(w)$ denote the length of $w$ and $\text{Red}(w)$ the set of all reduced expressions of $w$. Peterson-Proctor proved a formula for the cardinality of $\text{Red}(w)$. We let 
  $$ \Phi_{+}^w := \Phi_{+} \cap w \Phi_{-} $$
  denote the set of positive roots associated to $w$. This set is of cardinality $l(w)$. The elements of $\Phi_{+}^w$ are given in the following way: choose a reduced expression $(j_1, \ldots, j_N)$ of $w$. Then $\Phi_{+}^w = \{ \beta_1 , \ldots , \beta_N \}$ with 
  $$ \beta_k := s_{j_1} \cdots s_{j_{k-1}}(\alpha_{j_{k}}) $$ 
 for every $1 \leq k \leq N$. 
  Recall that this set $\Phi_{+}^w$ does not depend on any choice of reduced expression of $w$ and that $\Phi_{+}^{w} \neq  \Phi_{+}^{w'}$ if $w \neq w'$. The Peterson-Proctor hook formula can be written as follows:
  
   \begin{prop}[Peterson-Proctor] \label{Peterson-Proctor}
  Let $w$ be a dominant minuscule element of $W$. Then the number of reduced expressions of $w$ is given by
  \begin{equation} \label{eqPeterProc}
   \sharp \text{Red}(w) = \frac{l(w)!}{\prod_{\beta \in \Phi_{+}^{w}} \hgt(\beta)}.
     \end{equation}
  \end{prop}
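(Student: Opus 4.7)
The plan is to realize $\text{Red}(w)$ as the set of linear extensions of the heap poset of $w$, and then invoke the hook length formula for (d-complete or) minuscule posets. Since $w$ is dominant minuscule, the remark following Theorem~\ref{Stembridge} tells us that $w$ is fully-commutative, so any two reduced expressions of $w$ are related by commutations $(\ldots,k,l,\ldots)\to(\ldots,l,k,\ldots)$ with $k\cdot l=0$.

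First I would define the \emph{heap poset} $P_w$ on $\{1,\ldots,N\}$ associated to a chosen reduced expression $\mathbf{j}=(j_1,\ldots,j_N)$: take the transitive closure of the relation $k\prec l$ iff $k<l$ and $j_k\cdot j_l\neq 0$. Full-commutativity ensures that $P_w$ is independent of the choice of $\mathbf{j}$ (up to canonical isomorphism), and a standard argument due to Viennot / Stembridge gives a bijection between $\text{Red}(w)$ and the set $\mathcal{L}(P_w)$ of linear extensions, simply by reading off the indices in the order prescribed by the extension. Via the bijection $k\mapsto\beta_k=s_{j_1}\cdots s_{j_{k-1}}(\alpha_{j_k})$, we transport $P_w$ to a poset structure on $\Phi_+^w$, and $|P_w|=l(w)=N$.

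Next I would identify, for every $\beta\in\Phi_+^w$, the hook length $h(\beta)$ in $P_w$ with the root-theoretic height $\hgt(\beta)$. This is the key structural input, and it is where the dominant minuscule hypothesis enters decisively. Writing $\beta=\beta_k$, I would proceed by induction on $k_+-k$ (or equivalently by downward induction on $k$), using the two conditions of Theorem~\ref{Stembridge}: the minuscule condition $\sum_{k<l<k_+}j_k\cdot j_l=-2$ when $k_+\leq N$ forces each covering move in the heap to adjust the height by exactly $1$ in a controlled way, while the dominant condition $\sum_{l>k}j_k\cdot j_l\geq -1$ when $k_+=N+1$ ensures that the hook at $\beta_k$ does not "overshoot" and matches $\hgt(\beta_k)$. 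Combining this identification with the hook length formula of Peterson--Proctor for d-complete (equivalently minuscule) posets yields
$$|\mathcal{L}(P_w)|\;=\;\frac{|P_w|!}{\prod_{\beta\in\Phi_+^w}h(\beta)}\;=\;\frac{l(w)!}{\prod_{\beta\in\Phi_+^w}\hgt(\beta)},$$
which together with the heap bijection of the first step is exactly \eqref{eqPeterProc}.

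The main obstacle is the middle step: proving that the combinatorial hook length in the heap poset coincides with $\hgt(\beta)$, and concomitantly that $P_w$ is d-complete so that the Peterson--Proctor hook formula applies. Both facts genuinely use both halves of Stembridge's characterization; for general fully-commutative elements (for instance $s_3s_1s_2s_4s_3$ in type $D_4$, cited in the example before Section~\ref{fullcom}) neither the d-completeness nor the hook=height identity survives, and accordingly the formula \eqref{eqPeterProc} fails outside the dominant minuscule class.
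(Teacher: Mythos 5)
The paper does not give its own proof of Proposition~\ref{Peterson-Proctor}: it simply records the statement with the attribution to Peterson--Proctor, and then observes that it is recovered from Nakada's colored hook formula (Theorem~\ref{HookNakada}) by specializing every variable $\alpha_j$ to $1$. Indeed, under this specialization the left side of~\eqref{eqNakada} becomes $\prod_{\beta\in\Phi_+^w}1/\hgt(\beta)$, while each summand on the right degenerates to $1/N!$, so summing over $\text{Red}(w)$ gives $\sharp\text{Red}(w)/l(w)!$ and~\eqref{eqPeterProc} follows. That is the whole of the paper's ``proof.''

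Your approach is a genuinely different route: translate reduced expressions into linear extensions of the heap poset $P_w$ (using full commutativity, which the paper also records for dominant minuscule $w$), then match the combinatorial hook length in $P_w$ with $\hgt(\beta)$ on $\Phi_+^w$, and finally invoke the hook length formula for $d$-complete posets. This is essentially the original Peterson--Proctor argument and it makes the role of Stembridge's two conditions in Theorem~\ref{Stembridge} transparent, whereas the paper's route simply degenerates a finer rational identity. What the paper's route buys is brevity and uniformity with the rest of Section~\ref{result3}, where Nakada's rational-function version, not its numerical shadow, is the form actually used (e.g.\ in Proposition~\ref{propbarD}). What your route buys is an explanation of \emph{why} heights appear as hooks and why dominance is needed.

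Two cautions on the write-up as it stands. First, you openly flag but do not carry out the step identifying $h(\beta_k)$ with $\hgt(\beta_k)$ and establishing $d$-completeness of $P_w$; as written this is a plan, not a proof, and that step is exactly where the content lies --- precisely because it is the only place the full strength of the dominant minuscule hypothesis is used, you cannot gloss it. Second, ``$d$-complete (equivalently minuscule)'' is not correct: minuscule posets form a strict subclass of $d$-complete posets, and for a general dominant ($\lambda$-)minuscule $w$ the heap $P_w$ is $d$-complete but need not be a minuscule poset. The theorem you need is Proctor's hook length formula for $d$-complete posets, which is the right level of generality; conflating it with the narrower minuscule-poset statement would leave a gap.
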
 
  
  Nakada \cite{Nakada} proved a generalisation of this formula. Recall the following notations from {{\cite[Section 6]{Casbi2}}}: we consider the simple roots $\alpha_1 , \ldots , \alpha_n$ of $\mathfrak{g}$ as independent formal variables and for every positive root $\beta = a_1 \alpha_1 + \cdots + a_n \alpha_n \in \Phi_{+}$ with $a_1, \ldots , a_n \in \mathbb{Z}_{\geq 0}$,  we consider the rational function 
  $$ \frac{1}{\beta} := \frac{1}{a_1 \alpha_1 + \cdots + a_n \alpha_n} \in \mathbb{R}(\alpha_1, \ldots , \alpha_n) . $$
  Nakada's colored hook formula can be written as follows: 
  
  \begin{thm}[{{\cite[Corollary 7.2]{Nakada}}}] \label{HookNakada}
 Let $w$ be a dominant minuscule element of $W$. Then the following equality holds in $\mathbb{R}(\alpha_1, \ldots , \alpha_n)$:
  \begin{equation} \label{eqNakada}
   \prod_{\beta \in \Phi_{+}^{w}} \frac{1}{\beta} 
 =  \sum_{(j_1, \ldots , j_N) \in \text{Red}(w)} \frac{1}{\alpha_{j_1}} \frac{1}{\alpha_{j_1} + \alpha_{j_2}} \cdots \frac{1}{\alpha_{j_1} + \alpha_{j_2} + \cdots + \alpha_{j_N}}  
   \end{equation} 
 \end{thm}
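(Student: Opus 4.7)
The plan is to proceed by induction on $N = l(w)$. The base case $N=0$ is trivial, as both sides equal $1$ (empty products and empty sum reduced to the empty word). For the inductive step, I would first establish the stability lemma: if $w$ is dominant minuscule and $i \in L(w)$ is a left descent, then $s_i w$ is also dominant minuscule of length $N-1$. This follows from Stembridge's characterization (Theorem~\ref{Stembridge}) applied to a reduced expression of $w$ beginning with $i$: removing the first letter preserves both the minuscule and dominance conditions. Note that the analogous statement for right descents can fail (for example $w = s_2 s_1 s_3 s_2$ in type $A_3$ has right descent $2$, but $w s_2 = s_2 s_1 s_3$ is minuscule yet not dominant minuscule), which forces the peeling to occur on the left.

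Next, I would group reduced expressions of $w$ by their first letter. For each $i \in L(w)$, the contribution to the right-hand side of \eqref{eqNakada} is
\[
\frac{1}{\alpha_i}\, H_i(w), \quad \text{where} \quad H_i(w) := \sum_{(j_1,\ldots,j_{N-1}) \in \text{Red}(s_i w)} \prod_{k=1}^{N-1} \frac{1}{\alpha_i + \alpha_{j_1}+\cdots+\alpha_{j_k}}.
\]
The key technical obstacle is that $H_i(w)$ is not directly given by the inductive hypothesis applied to $s_i w$, because every denominator carries an extra shift by $\alpha_i$.

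To handle this I would strengthen the inductive hypothesis. For a dominant minuscule $w'$ of length $M$ and a formal shift $\gamma$, set $F(w';\gamma) := \sum_{(j_1,\ldots,j_M)\in \text{Red}(w')} \prod_{k=1}^M (\gamma + \alpha_{j_1}+\cdots+\alpha_{j_k})^{-1}$, and aim to prove the combined identity
\[
\sum_{i \in L(w)} \frac{1}{\alpha_i}\, F(s_i w; \alpha_i) \;=\; \prod_{\beta \in \Phi_{+}^{w}} \frac{1}{\beta}.
\]
The natural way to attack this is to clear denominators and show the resulting equality of polynomials, reducing the problem to the identity $\sum_{i \in L(w)} \gamma_i = \sigma(w)$ for suitable positive roots $\gamma_i$ entering the expansion, where $\sigma(w) = \lambda - w\lambda$ is the constant partial-sum at level $N$. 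The critical input here is the description of the "hook structure" of $\Phi_{+}^{w}$ attached to a dominant minuscule element, essentially equivalent to the $d$-complete poset structure underlying Proctor's hook formula.

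I expect the main obstacle to lie precisely in establishing this cancellation identity: one must show that the accumulated shifts by $\alpha_i$ conspire, after combining contributions of all left descents, to reconstruct exactly $\prod_{\beta \in \Phi_{+}^{w}} \beta^{-1}$. In the small example $w = s_2 s_1 s_3 s_2$ (type $A_3$), the identity boils down to the elementary fraction identity $\tfrac{1}{\alpha_1+\alpha_2} + \tfrac{1}{\alpha_2+\alpha_3} = \tfrac{\alpha_1+2\alpha_2+\alpha_3}{(\alpha_1+\alpha_2)(\alpha_2+\alpha_3)}$, whose numerator is precisely $\sigma(w)$ and cancels the "extra" factor produced by the last partial sum. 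Carrying this out in general requires the combinatorial structure of reduced words in dominant minuscule elements, and is the genuinely nontrivial part of the argument; I would follow Nakada's strategy from \cite{Nakada} and adapt it if needed.
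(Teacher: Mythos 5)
The paper does not prove this statement: it is quoted from Nakada, Corollary 7.2, and used as an external ingredient. So the only question is whether your sketch would constitute a valid proof on its own.

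Two of your observations are correct and worth retaining. The left-peeling stability lemma --- if $i$ is a left descent of a dominant minuscule $w$, then $s_i w$ is dominant minuscule --- does follow from Theorem~\ref{Stembridge}: the Stembridge conditions on the word $(j_2,\ldots,j_N)$ are a subset of those already imposed on $(i,j_2,\ldots,j_N)$. And your warning that right-peeling can leave the class is correct and is a genuine pitfall: with $w = s_2 s_1 s_3 s_2$, the element $w s_2 = s_2 s_1 s_3 = s_2s_3s_1$ is minuscule but not dominant minuscule, and (this is essentially the paper's own Example~\ref{examplehomog}) the hook formula genuinely fails for it, the sum having numerator $\alpha_1+2\alpha_2+\alpha_3$. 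So a naive right-descent recursion, despite the pleasant feature that the last denominator $\alpha_{j_1}+\cdots+\alpha_{j_N}$ is constant, cannot apply the inductive hypothesis to $w s_i$.

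The gap is in what you call the strengthened inductive hypothesis. You introduce $F(w';\gamma)$ but never state a closed form for it, so there is nothing to induct on: expanding $F(s_iw;\alpha_i)$ by further left-peeling only accumulates a growing shift in $\gamma$ without closing the recursion. The displayed target $\sum_{i\in L(w)}\tfrac{1}{\alpha_i}F(s_iw;\alpha_i)=\prod_{\beta\in\Phi_+^w}\beta^{-1}$ still contains unevaluated sums over reduced words; and the identity you hint at, ``$\sum_{i\in L(w)}\gamma_i=\sigma(w)$'', is not formulated precisely enough to see that it captures the actual cancellation, which involves sums of \emph{products} of shifted reciprocals, not a sum of roots. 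Your $A_3$ example is also misleading on this point: $w=s_2s_1s_3s_2$ has a \emph{unique} left descent, so the sum over $L(w)$ is a single term and the identity collapses to the trivial $\tfrac{1}{a}+\tfrac{1}{b}=\tfrac{a+b}{ab}$ (this comes from the two reduced words of $s_2w$, not from several descents); the real combinatorial content, partial-fraction cancellations across several left descents and nontrivial poset shapes, never appears. Finally, the closing sentence --- ``I would follow Nakada's strategy from \cite{Nakada} and adapt it if needed'' --- is circular, since the statement to be proved \emph{is} Nakada's Corollary 7.2. As written, the genuinely hard step is correctly located but not carried out.
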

 
 Specializing all the variables $\alpha_j$ to $1$ we recover the Peterson-Proctor hook formula~\eqref{eqPeterProc}. 
 
 \subsection{Homogeneous representations in $R-gmod$}
 \label{homogKLR}
 
  We now recall a construction due to Kleshchev-Ram \cite{KRhom} of certain distinguished irreducible finite dimensional representations over quiver Hecke algebras. We assume that $\mathfrak{g}$ is of finite simply-laced type. Consider the category $R-gmod$ of graded finite dimensional modules over the  quiver Hecke algebras associated to $\mathfrak{g}$. One of Kleshchev-Ram's motivations was the study of cuspidal modules in $R-gmod$, i.e. certain simple modules playing the role of fundamental representations in Lie theory. This led them to introduce a larger (but still finite) class of simple objects in $R-gmod$, called homogeneous modules. 
  
  \begin{deftn}[Kleshchev-Ram]
  A module in $R-gmod$ is called \textit{homogeneous} if it is concentrated in a single degree with respect to the natural grading of quiver Hecke algebras. 
   \end{deftn}
  
  We let $\mathcal{H}om$ denote the set of simple homogeneous modules in $R-gmod$ up to isomorphism and grading shift. 
   The following statement shows that the elements of $\mathcal{H}om$  are parametrized in a natural way by fully-commutative elements of the Weyl group of $\mathfrak{g}$. 
   
    \begin{thm}[{{\cite[Theorem 3.6]{KRhom}}}] \label{thmKRhom}
 There is a bijection 
 $$ \begin{array}{ccc}
     \mathcal{FC} & \longrightarrow & \mathcal{H}om \\
             w   & \longmapsto  &  S(w) 
 \end{array} $$
 between the set of fully-commutative elements of $W$ and the set of isomorphism classes of simple homogeneous modules in $R-gmod$. 
  The  homogeneous module $S(w)$ admits the following decomposition into weight subspaces:
 $$ S(w) = \bigoplus_{(j_1, \ldots , j_N) \in \text{Red}(w)} \mathbb{C} e(j_1, \ldots, j_N) . $$
 In other words the weight spaces of $S(w)$ are all one-dimensional and they are in bijection with the set of reduced expressions of $w$. 
 
 The image of $\mathcal{M}in^{+}$ via the above bijection is a subfamily of homogeneous modules called  \textit{strongly homogeneous} modules. 
  \end{thm}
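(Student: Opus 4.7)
The plan is to construct an explicit candidate module $S(w)$ for each fully-commutative $w$, verify it is a simple homogeneous object of $R\text{-gmod}$, and then argue that this construction exhausts $\mathcal{H}om$ up to isomorphism. Concretely, for $w \in \mathcal{FC}$ set $\beta = \wt(w) := \sum_{k=1}^{N} \alpha_{j_k}$ (independent of the reduced expression) and take $S(w)$ to be the $\mathbf{k}$-vector space with basis $\{ v_{\mathbf{j}} \mid \mathbf{j} \in \text{Red}(w)\}$. The $R(\beta)$-action is defined by $e(\mathbf{i}) v_{\mathbf{j}} = \delta_{\mathbf{i},\mathbf{j}} v_{\mathbf{j}}$, by $x_k v_{\mathbf{j}} = 0$ for every $k$, and by $\tau_l v_{\mathbf{j}} = v_{\mathbf{j}'}$ when $j_l \cdot j_{l+1} = 0$, where $\mathbf{j}' = (j_1,\ldots,j_{l-1},j_{l+1},j_l,j_{l+2},\ldots,j_N)$, and $\tau_l v_{\mathbf{j}} = 0$ otherwise. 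By Matsumoto's theorem combined with Definition~\ref{definitionfullcom}, all reduced expressions of a fully-commutative $w$ are related by commutation moves only, so the $\tau_l$'s connect the basis vectors transitively.

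Next I would check the defining relations of $R(\beta)$ on this space, which collapse dramatically because $x_k$ acts by zero and every nonzero $\tau_l$-move requires $j_l \cdot j_{l+1} = 0$. The $x$-$\tau$ intertwining relations and the quadratic relation $\tau_l^2 = Q_{j_l, j_{l+1}}(x_l, x_{l+1})$ reduce to $\tau_l^2 = 0$ on weight spaces where $\tau_l$ acts (the polynomial $Q_{j_l, j_{l+1}}$ vanishes when $j_l \cdot j_{l+1} = 0$), and the braid-type relation $\tau_l \tau_{l+1} \tau_l e(\mathbf{j}) = \tau_{l+1} \tau_l \tau_{l+1} e(\mathbf{j}) + (\text{corrections})$ is verified case-by-case on each $v_{\mathbf{j}}$; the fully-commutativity hypothesis ensures that the corrections (which would be forced to be nonzero exactly when a braid triple $(i,j,i)$ with $i \cdot j = -1$ appears) never come into play. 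Homogeneity is then immediate from the grading conventions: $\deg x_k = 2$ but $x_k$ acts by $0$, and each active $\tau_l$ has degree $-j_l \cdot j_{l+1} = 0$, so $S(w)$ is concentrated in a single degree. Simplicity follows from the one-dimensionality of each weight space $e(\mathbf{j}) S(w)$ together with the transitivity of the $\tau$-action among basis vectors.

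The main obstacle will be the converse: showing that every simple homogeneous module arises as some $S(w)$ with $w \in \mathcal{FC}$. Here I would start from a simple homogeneous $M \in R(\beta)\text{-gmod}$, consider its word support $\{\mathbf{j} \mid e(\mathbf{j}) M \neq 0\}$, and argue using the quadratic and braid relations that homogeneity forces (i) every $x_k$ to act as zero on $M$ (any nontrivial action would produce elements in a strictly higher degree), (ii) every $e(\mathbf{j}) M$ to be at most one-dimensional, and (iii) the support to be closed under commutation moves but to avoid braid subwords $(i,j,i)$ with $i \cdot j = -1$, for otherwise the quadratic relation together with nonvanishing of $Q_{i,j}$ would force a nonzero element in shifted degree. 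These constraints together imply that the support is exactly $\text{Red}(w)$ for some $w$ which is fully-commutative in the sense of Definition~\ref{definitionfullcom}, yielding $M \cong S(w)$. Distinct $w, w' \in \mathcal{FC}$ give non-isomorphic modules because $\text{Red}(w) \cap \text{Red}(w') = \emptyset$.

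Finally, to identify the image of $\mathcal{M}in^{+}$ as the strongly homogeneous subfamily, I would combine Stembridge's characterization (Theorem~\ref{Stembridge}) with an analysis of the cuspidal decomposition of $S(w)$: the dominant minuscule condition, which controls the numerical pattern of occurrences of each letter in any reduced expression, translates precisely into the combinatorial property singled out by Kleshchev-Ram under the name \emph{strongly homogeneous}, namely that $S(w)$ admits a particularly rigid factorization through cuspidal representations indexed by $\Phi_{+}^{w}$ and matches the framework in which the Peterson-Proctor and Nakada hook formulas (Proposition~\ref{Peterson-Proctor} and Theorem~\ref{HookNakada}) compute its dimension and its image under $\overline{D}$ as in~\eqref{eqintro2}.
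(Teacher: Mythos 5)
This theorem is quoted in the paper from Kleshchev--Ram \cite{KRhom} without an in-paper proof, so the comparison is against their argument rather than anything in this paper; your proposal follows the same overall strategy (explicit construction of $S(w)$, relation check, exhaustion of $\mathcal{H}om$, identification of the strongly homogeneous subfamily). The construction you describe is the right one.

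There is, however, a genuine error in your verification of the quadratic relation. You claim that $Q_{j_l,j_{l+1}}$ vanishes when $j_l \cdot j_{l+1} = 0$, so that $\tau_l^2 = 0$ on the weight spaces where $\tau_l$ acts by a swap. This is backwards. In the standard simply-laced quiver Hecke algebra conventions, $Q_{ij}$ is the \emph{constant} polynomial $1$ when $i \neq j$ and $i \cdot j = 0$; it is a polynomial of the form $\pm(u-v)$, vanishing at $(0,0)$, precisely when $i \cdot j = -1$ (and it is $0$ when $i=j$, a case that never occurs in a reduced word). Since $x_k$ acts by zero on $S(w)$, the relation $\tau_l^2 e(\mathbf{j}) = Q_{j_l,j_{l+1}}(x_l,x_{l+1}) e(\mathbf{j})$ therefore evaluates to $\tau_l^2 e(\mathbf{j}) = e(\mathbf{j})$ exactly in the commuting case: the active $\tau_l$'s are \emph{involutions}, which is precisely what your swap rule $\tau_l v_{\mathbf{j}} = v_{\mathbf{j}'}$, $\tau_l v_{\mathbf{j}'} = v_{\mathbf{j}}$ implements. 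If $\tau_l^2$ were truly zero there, your own assignment would contradict the relation and $S(w)$ would not even be an $R(\beta)$-module. The construction is sound, but the stated verification would fail and must be corrected to $Q_{ij} \equiv 1$ (not $0$) for $i \cdot j = 0$. As a secondary remark, the claim in the exhaustion step that homogeneity of a simple $M$ by itself forces $\dim e(\mathbf{j})M \leq 1$ is not immediate and needs a real argument (e.g., via the one-dimensionality of the highest-weight space of a simple module and transitivity of the degree-zero $\tau$-moves on the support); as written it is asserted rather than proved.
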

  
   \begin{rk}
    It follows from the previous Theorem together with Proposition~\ref{Peterson-Proctor} that if $w$ is dominant minuscule, then the strongly homoheneous module $S(w)$ has dimension 
  $$ \dim S(w) = \frac{l(w)!}{\prod_{\beta \in \Phi_{+}^{w}} \hgt(\beta)} . $$
  \end{rk}
 
 \begin{rk}
 In type $A_n$, the cuspidal modules for arbitrary orderings on $\Phi_{+}$ are always homogeneous. In fact, if $M$ is a simple module of multiplicity-free weight, i.e. $M$ is a simple $R(\beta)$-module with $\beta \in Q_{+}$ of the form $\epsilon_1 \alpha_1 + \cdots + \epsilon_n \alpha_n$ with $\epsilon_1 , \ldots , \epsilon_n \in \{0,1\}^n$, then $M$ is always homogeneous. In the particular case of the natural ordering on the vertices of the type $A_n$ Dynkin diagram, i.e. $1 < 2 < \cdots < n$, the cuspidal modules in $R-gmod$ turn out to be strongly homogeneous. 
 
  In type $D_n$,  the cuspidal modules for the natural ordering  are always homogeneous (see {{\cite[Section 8.2]{KR}}}).
  \end{rk}
 
 \section{Mirkovi\'c-Vilonen cycles and equivariant multiplicities}
 \label{remindMV3}
 
  Throughout this section $G$ denotes a  semisimple simply-connected group, $N$ the unipotent radical of a Borel subgroup $B$ of $G$, and $T$ a maximal torus in $B$. We let $\mathfrak{g}, \mathfrak{n}, \mathfrak{b}, \mathfrak{t}$ denote their respective Lie algebras. We fix a labeling $I = \{1 , \ldots , n \}$ of the vertices of the Dynkin diagram of $\mathfrak{g}$ and we let $\alpha_1, \ldots , \alpha_n$ denote the corresponding simple roots. We also let $P$ denote the weight lattice and $W$ the Weyl group of $\mathfrak{g}$. 
 
  \subsection{Geometric Satake correspondence and MV basis}
  
   Here we fix notations and recall the main features of the geometric Satake correspondence following \cite{MV,BKK}. We also refer to \cite{Zhu} for more details about the geometry of the affine Grassmannian and the geometric Satake correspondence. Mirkovi\'c-Vilonen \cite{MV} discovered an intriguing connection between the category of  finite-dimensional representations of a complex reductive algebraic group $G$ on the one hand and a certain category of perverse sheaves on the affine Grassmannian $Gr_{G^{\vee}}$ associated to the Langlands dual of $G$ on the other hand. They exhibited a functor relating these two categories and proved that it satisfies several remarkable properties. In particular, the weight spaces of irreducible representations of $G$ can be interpreted as the cohomology  spaces of certain subvarieties of $Gr_{G^{\vee}}$ whose irreducible components are called \textit{Mirkovi\'c-Vilonen cycles}. One can use these to define an interesting basis of the algebra $\mathbb{C}[N]$ of functions on the unipotent radical $N$ of $G$ called the \textit{MV basis}. The works of Kamnitzer \cite{Kam}, Anderson \cite{And} and Baumann-Kamnitzer \cite{BK} show that this basis carries a natural crystal structure and has a combinatorial parametrization in terms of \textit{MV polytopes}. 
   
    \bigskip
   
   We consider the  algebraically closed field $\mathbb{C}$ and we set 
   $$ \mathcal{O} := \mathbb{C}[[t]] \quad \text{and} \quad \mathcal{K} := \mathbb{C}((t)) . $$
   We denote by $G^{\vee}$ the Langlands dual of $G$. The affine Grassmannian $Gr_{G^{\vee}}$ can be defined as 
   $$ Gr_{G^{\vee}} :=  G^{\vee}(\mathcal{K}) / G^{\vee}(\mathcal{O}) . $$
   There is a natural action of $T^{\vee}(\mathbb{C})$ ($T^{\vee}$ is a maximal torus in $G^{\vee}$) on $Gr_{G^{\vee}}$ whose locus of fixed points is given by a collection $\{ L_{\mu} , \mu \in P \}$ of points in $Gr_{G^{\vee}}$ indexed by the weight lattice of $G$. There is also a $G^{\vee}(\mathcal{O})$-action on $Gr_{G^{\vee}}$ and for a dominant weight $\lambda \in P^{+}$, we let $Gr_{G^{\vee}}^{\lambda}$ denote the orbit of $L_{\lambda}$ under this action. The affine Grassmannian can be decomposed as 
   $$ Gr_{G^{\vee}} = \bigsqcup_{\lambda \in P^{+}} Gr_{G^{\vee}}^{\lambda} . $$
   Moreover, for every $\lambda \in P^{+}$ one has 
   $$ \overline{Gr_{G^{\vee}}^{\lambda}} = \bigcup_{\substack{\mu \in P^{+} \\ \mu \leq \lambda}} Gr_{G^{\vee}}^{\mu} . $$
   where $\leq$ is the natural partial ordering on $P$ defined by $\mu \leq \lambda \Leftrightarrow \lambda - \mu \in Q_{+}$ (recall the notation $Q_{+}$ from Section~\ref{remindKLR3}).
   Let us now briefly recall the definition of Mirkovi\'c-Vilonen cycles. The regular dominant weight $\rho := (\sum_{\alpha \in \Phi_{+}} \alpha)/2$ induces a homomorphism $\mathbb{C}^{\times} \rightarrow T^{\vee}(\mathbb{C})$ and thus yields a $\mathbb{C}^{\times}$-action on $Gr_{G^{\vee}}$. The points $L_{\mu}, \mu \in P$ are exactly the fixed points of this $\mathbb{C}^{\times}$-action. Denoting respectively $S^{\mu}$ and $S_{-}^{\mu}$ the attractive and repulsive set of $L_{\mu}$, we have the Bialynicki-Birula decomposition
 $$ Gr_{G^{\vee}} = \bigcup_{\mu \in P} S^{\mu} =  \bigcup_{\mu \in P} S_{-}^{\mu} . $$
For every $\lambda \in P_{+}$ and  $\mu \in P$, a MV-cycle of type $\lambda$ and weight $\mu$ is by definition an irreducible component of the closed subvariety of $Gr_{G^{\vee}}$ given by
$$ \overline{Gr_{G^{\vee}}^{\lambda} \cap S_{-}^{\mu}} . $$
 One denotes $\mathcal{Z}(\lambda)_{\mu}$ the set of all MV cycles of type $\lambda$ and weight $\mu$ and 
 $$\mathcal{Z}(\lambda) := \bigcup_{\mu \in P} \mathcal{Z}(\lambda)_{\mu} .  $$ 
The geometric Satake correspondence \cite{MV} implies that the MV cycles carry a lot of information about the representation theory of $G$. For instance, the set of fundamental classes in $H_{\bullet}(\overline{Gr_{G^{\vee}}^{\lambda} \cap S_{-}^{\mu}})$ of all MV cycles of type $\lambda$ and weight $\mu$ forms a basis of the subspace $L(\lambda)_{\mu}$  of weight $\mu$ in the irreducible representation $L(\lambda)$ of $G$ of highest weight $\lambda$. In particular one has 
$$ \dim_{\mathbb{C}} L(\lambda)_{\mu} = \sharp \mathcal{Z}(\lambda)_{\mu} . $$
 Gathering these bases together we get that the set 
 $$ \{ [Z]  \mid Z \in \mathcal{Z}(\lambda) \} $$
 forms a basis of $L(\lambda)$ called the (upper) MV basis of $L(\lambda)$. 
Then there is a way to glue these bases for all $\lambda \in P^{+}$ to get a basis of $\mathbb{C}[N]$ called the MV basis of $\mathbb{C}[N]$ (see {{\cite[Section 6.1]{BKK}}} for more details). This basis is denoted $\{ b_{Z} , Z \in \mathcal{Z}(\infty) \}$  where the parametrizing set $\mathcal{Z}(\infty)$ is given by certain MV cycles called \textit{stable MV cycles}, whose weights belong to $-Q_{+}$.

   \subsection{Equivariant multiplicities of MV cycles}
  
 In this subsection we recall several notions introduced by Baumann-Kamnitzer-Knutson \cite{BKK}. One of their main motivations was Muthiah's conjecture \cite{Mu} stating the $W$-equivariance of a certain map  $L(\lambda) \longrightarrow \mathbb{C}(\alpha_1, \ldots , \alpha_n)$ where $L(\lambda)$ is the irreducible representation of highest weight $\lambda$ of $G$. This map is defined using geometric tools called \textit{equivariant multiplicities} developped by Brion \cite{Brion} relying on former constructions due to Joseph \cite{Joseph} and Rossmann \cite{Rossmann}. The proof of \cite{BKK} crucially involves the geometric Satake correspondence together with a formula for Duistermaat-Heckmann measures proved by Knutson  \cite{Knu}. Here we will mostly focus on equivariant multiplicities  of elements of various bases of $\mathbb{C}[N]$ such as the Mirkovi\'c-Vilonen basis or the dual canonical basis. 
 
  \bigskip
  
  The notion of \textit{equivariant multiplicity} of a closed projective scheme has been introduced by Brion \cite{Brion}. Given such a scheme $X$ together with an action of a torus $T$ on $X$, we let $X^{T}$ denote the set of fixed points of this action and $H_{\bullet}^{T}(X)$ denote the $T$-equivariant homology of $X$. We assume all the fixed points are non-degenerate, i.e. for each $p \in X^T$ the weights of the action of $T$ on the tangent space of $X$ at $p$ are non zero. It follows from Brion's results \cite{Brion} that the set of homology classes of the points in the fixed locus $X^{T}$ actually forms a $\mathbb{C}(\text{Lie} T)$-basis of $S^{-1}H_{\bullet}^{T}(X)$ for a certain multiplicative subset $S$ of $H_{\bullet}^{T}(X)$ (see also {{\cite[Section 9.2]{BKK}}}). Therefore one can decompose the class of $X$ on this basis as 
  $$ [X] = \sum_{p \in X^{T}} \epsilon_{p}^{T}(X) [\{p\}] . $$
  The coefficient $\epsilon_{p}^{T}(X) \in \mathbb{C}(\text{Lie}T)$ is called the equivariant multiplicity of $X$ at $p$. Let us state an important property of these equivariant multiplicities, that can be obtained as a consequence of {{\cite[Theorem 4.2]{Brion}}}. 
  
   \begin{prop}[{{\cite[Theorem 4.2]{Brion}}}] \label{propBrion}
   Let $p \in X^T$ non-degenerate and let $\beta_1 , \ldots , \beta_m$ denote the weights  of the action of $T$ on the tangent space of $X$ at $p$. Then for any closed $T$-invariant subvariety $Y \subset X$ containing $p$, one has 
   $$ \text{$Y$ is smooth at $p$} \quad  \Rightarrow \quad \epsilon_{p}^{T}(Y) = \frac{1}{ \beta_{i_1} \cdots \beta_{i_r}} $$
   where $r := \dim(Y)$ and $\beta_{i_1} , \ldots , \beta_{i_r}$ denote the weights of the action of $T$ on $T_x Y \subset T_x X$.  
   \end{prop}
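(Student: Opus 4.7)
The plan is to reduce the statement to a computation on the tangent space $T_p Y$, exploiting the fact that the equivariant multiplicity $\epsilon_p^T(Y)$ is a local invariant of the pair $(Y,p)$ together with its $T$-action. Concretely, one starts from the definition of $\epsilon_p^T(Y)$ as the coefficient of $[\{p\}]$ in the expansion of $[Y]$ in the $\mathbb{C}(\text{Lie}\,T)$-basis $\{[\{p'\}]\}_{p' \in X^T}$ of the localized equivariant homology $S^{-1}H_\bullet^T(X)$. The first step is to observe that this coefficient only depends on the formal neighborhood of $p$ in $Y$: pulling back along the inclusion of a $T$-stable open (or even formal) neighborhood of $p$ in $X$ does not change the component along $[\{p\}]$, because every other $T$-fixed point of $X$ lies outside such a neighborhood.

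Next I would linearize. Since $Y$ is smooth at $p$ and $T$ is a torus acting on $Y$ with $p$ fixed, a standard equivariant linearization result (for example Luna's slice theorem applied at a smooth fixed point, or the purely formal statement that a smooth pointed $T$-variety is $T$-equivariantly formally isomorphic to its tangent space at that point) yields a $T$-equivariant isomorphism between the formal neighborhood of $p$ in $Y$ and the formal neighborhood of $0$ in the $T$-representation $V := T_p Y$. Because $V \subset T_p X$ is $T$-stable, it decomposes into a sum of weight lines whose weights form a subset $\{\beta_{i_1},\dots,\beta_{i_r}\} \subset \{\beta_1,\dots,\beta_m\}$; these are exactly the weights appearing in the statement. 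By the locality observation of the previous step this reduces the problem to showing $\epsilon_0^T(V) = (\beta_{i_1}\cdots\beta_{i_r})^{-1}$.

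The remaining computation on the linear $T$-variety $V$ is elementary. The origin is the unique $T$-fixed point of $V$, so in $S^{-1}H_\bullet^T(V)$ the fundamental class $[V]$ is necessarily a $\mathbb{C}(\text{Lie}\,T)$-multiple of $[\{0\}]$. To identify this multiple one uses the self-intersection formula: the normal bundle to $\{0\} \hookrightarrow V$ is $V$ itself, so its equivariant Euler class is $\prod_{j} \beta_{i_j}$, whence $i^* i_* [\{0\}] = \bigl(\prod_j \beta_{i_j}\bigr)\,[\{0\}]$. Combined with the fact that restriction to the fixed point sends $[V]$ to $[\{0\}]$ in localized homology, this forces $[V] = \bigl(\prod_j \beta_{i_j}\bigr)^{-1}[\{0\}]$, which is the desired formula.

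The main technical obstacle is the rigorous statement of locality and equivariant linearization in the degree of generality required here (closed projective scheme, general torus, possibly singular ambient $X$); once these are in place, the linear computation is routine. A secondary, mostly bookkeeping, difficulty is that the natural place to perform the localization is not $X$ itself but a sufficiently small $T$-stable neighborhood of $p$, so one must check compatibility of the two notions of $\epsilon_p^T$ under the open restriction, which again follows from the behavior of equivariant homology under open immersions combined with the $T$-fixed point basis theorem used in the definition preceding the proposition.
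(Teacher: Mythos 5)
The paper does not actually prove this proposition: it is stated as a direct consequence of Brion's Theorem 4.2 and cited without further argument, so there is no in-paper proof to compare against. Your reconstruction — reduce to the tangent representation $T_p Y$ and then compute the coefficient of $[\{p\}]$ via the equivariant self-intersection formula — is headed in the right direction, and the final calculation on the linear model is correct. However, the reduction step as you state it has a genuine gap. You first shrink to a $T$-stable Zariski-open neighborhood of $p$ to discard the other fixed points, and then invoke an equivariant linearization of $(Y,p)$; but linearization at a smooth $T$-fixed point is a formal (or \'etale or analytic) local statement, not a Zariski-local one, so after shrinking you still do not have a $T$-equivariant isomorphism of a Zariski neighborhood of $p$ in $Y$ with an open subset of $T_pY$. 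Showing that the coefficient $\epsilon_p^T(Y)$, which is defined through the global fixed-point basis of $S^{-1}H_\bullet^T(X)$, depends only on the formal germ of $(Y,p)$ is essentially the content of Brion's theorem rather than a preliminary observation you can use for free.

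A tighter route that avoids linearization entirely: by compatibility of pushforward along the closed embedding $Y \hookrightarrow X$ with the fixed-point expansions on both sides, the coefficient of $[\{p\}]$ in $[Y]$ is the same whether computed in $S^{-1}H_\bullet^T(X)$ or in $S^{-1}H_\bullet^T(Y)$, so one may take $X = Y$. Since $Y$ is smooth at $p$, the inclusion $i_p\colon \{p\} \hookrightarrow Y$ is a regular embedding near $p$ and admits a $T$-equivariant refined Gysin map $i_p^!$. Applying $i_p^!$ to the identity $[Y] = \sum_{p'\in Y^T} \epsilon_{p'}^T(Y)\,[\{p'\}]$ and using $i_p^![Y]=1$, $i_p^!\,i_{p*}(1) = e_T(T_pY) = \beta_{i_1}\cdots\beta_{i_r}$, and $i_p^![\{p'\}] = 0$ for $p' \neq p$, yields $\epsilon_p^T(Y) = (\beta_{i_1}\cdots\beta_{i_r})^{-1}$ entirely within the algebraic category, with no appeal to formal neighborhoods or linearization. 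This is essentially your self-intersection computation, but carried out directly on $Y$ near $p$ rather than on a linear model, which removes the technical obstacle you flagged.
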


    \begin{rk} 
    \begin{enumerate} 
    \item  Note that by {{\cite[Theorem 4.2 (iii)]{Brion}}}  this implication is actually an equivalence in the special case where $Y$ is $X$ itself. 
    \item If $Y$ does not contain $p$, then $\epsilon_{p}^{T}(Y) = 0$ (see {{\cite[Theorem 4.2 (i)]{Brion}}}). 
    \item Brion \cite{Brion} also shows that this notion of equivariant multiplicity actually coincides with a definition due to Rossmann \cite{Rossmann}. More precisely, he proves that when $X$ is smooth at the point $p$,  Rossmann's equivariant multiplicity of $Y$ at $p$ is equal to $(\beta_1 \cdots \beta_m) \cdot  \epsilon_{p}^{T}(Y)$ (see {{\cite[Theorem 4.5]{Brion}}}). 
     \end{enumerate}
     \end{rk}
     
        \smallskip
 
 Baumann-Kamnitzer-Knutson \cite{BKK} used this notion of equivariant multiplicity in the study of the MV basis of $\mathbb{C}[N]$ via Duistermaat-Heckmann measures. These measures were already known to be supported on the MV polytopes. One of the main results of \cite{BKK} is to provide a formula allowing to relate these measures to the equivariant multiplicities of MV cycles. More precisely, with the notations of Proposition~\ref{propBrion}, we consider $X := Gr_{G^{\vee}}$ the affine Grassmaniann  associated to a semisimple algebraic reductive group $G$, together with the action of the torus $T^{\vee}(\mathbb{C})$.  As recalled in the previous section, the set of fixed points of this action is $\{ L_{\mu} , \mu \in P \}$. For each $\mu \in P$, the point $L_{\mu}$ is known to belong to any MV cycle of weight $\mu$ and it is known that $L_{\mu}$ is non-degenerate. The MV cycles are closed irreducible subvarieties of $Gr_{G^{\vee}}$ and are  invariant under the action of $T^{\vee}$ and thus will play the role of $Y$ in Proposition~\ref{propBrion}. As the present paper mostly deals with the simply-laced case, we will make the identification $\text{Lie} T^{\vee}(\mathbb{C}) \simeq \text{Lie} T(\mathbb{C})$ and hence identify the field $\mathbb{C}(\text{Lie}  T^{\vee}(\mathbb{C}))$ with $\mathbb{C}(\alpha_1, \ldots , \alpha_n)$ where $\alpha_1, \ldots , \alpha_n$ are indeterminates in bijection with the simple roots of $G$. We refer to {{\cite[Corollary 10.6]{BKK}}} for more precise statements.
 
  Let us describe the formulas proved in \cite{BKK}. It is known (see for instance \cite{GLSAENS,GLS}) that the algebra $\mathbb{C}[N]$ can be identified with the dual (as a Hopf algebra) of $U(\mathfrak{n})$. We will denote by $(\cdot , \cdot)$ this duality. Choose a root vector $e_i \in \mathfrak{n}$ of weight $\alpha_i$ for each $i \in I$. For any $N \geq 1$ and any $N$-tuple $\mathbf{j} = (j_1, \ldots , j_N)$ of elements of $I$, we set $e_{\mathbf{j}} := e_{j_1} \cdots e_{j_N} \in U(\mathfrak{n})$. We also define the following rational fraction in $\alpha_1, \ldots , \alpha_n$ following \cite{BKK}:
 $$ \overline{D}_{\mathbf{j}} := \frac{1}{\alpha_{j_1} (\alpha_{j_1} + \alpha_{j_2}) \cdots (\alpha_{j_1} + \cdots + \alpha_{j_N})} . $$
 Then one defines the following map:
 \begin{equation} \label{deftnDbar}
 \begin{array}{cccc}
  \overline{D} : & \mathbb{C}[N]  & \longrightarrow & \mathbb{C}(\alpha_1, \ldots , \alpha_n) \\
   {}  & f & \longmapsto & \sum_{\mathbf{j}}  \overline{D}_{\mathbf{j}} (f,e_{\mathbf{j}}) . 
  \end{array}
  \end{equation}
  Note that this sum is always finite as $U(\mathfrak{n})$ acts locally nilpotently on $\mathbb{C}[N]$. 
   One of the main results of \cite{BKK} is that the evaluation of $\overline{D}$ on an element $b_{Z}$ of the Mirkovi\'c-Vilonen basis can be related to a certain equivariant multiplicity of the MV cycle $Z$. 
 
   \begin{thm}[{{\cite[Lemma 8.3, Corollary 10.6]{BKK}}}] \label{thmBKK}
    \begin{enumerate}
    \item The map $\overline{D}$ is an algebra morphism. 
    \item  For any $\mu \in  - Q_{+}$ and any stable MV cycle $Z$ of weight  $\mu$, one has 
   $$ \overline{D}(b_{Z}) = \epsilon_{L_{\mu}}^{T^{\vee}}(Z) . $$
       \end{enumerate}
   \end{thm}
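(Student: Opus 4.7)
For part (1), my plan is to exploit the Hopf algebra duality between $\mathbb{C}[N]$ and $U(\mathfrak{n})$. Since the product on $\mathbb{C}[N]$ is dual to the coproduct on $U(\mathfrak{n})$, and each $e_i$ is primitive, one has
$$ \Delta(e_{\mathbf{j}}) = \sum_{S \subseteq \{1,\ldots, N\}} e_{\mathbf{j}|_S} \otimes e_{\mathbf{j}|_{S^c}} $$
where $\mathbf{j}|_S$ denotes the subword indexed by $S$. Substituting in the definition~\eqref{deftnDbar} and regrouping terms by the pair of subwords $(\mathbf{k},\mathbf{l})$, the desired identity $\overline{D}(fg) = \overline{D}(f)\overline{D}(g)$ reduces to the shuffle identity
$$ \overline{D}_{\mathbf{k}} \cdot \overline{D}_{\mathbf{l}} = \sum_{\mathbf{j} \in \mathbf{k} \shuffle \mathbf{l}} \overline{D}_{\mathbf{j}} $$
for all finite words $\mathbf{k},\mathbf{l}$ over $I$. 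This classical identity for rational fractions of the form $1/(\alpha_{j_1}(\alpha_{j_1} + \alpha_{j_2}) \cdots)$ can be proved by induction on $\ell(\mathbf{k})+\ell(\mathbf{l})$ via partial fractions, or more conceptually by recognizing each $\overline{D}_{\mathbf{j}}$ as the iterated Laplace transform of the characteristic function of a simplex and invoking the decomposition of a product of simplices into shuffled simplices.

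For part (2), the idea is to pass through Duistermaat-Heckman measures, following the philosophy of \cite{BKK}. The equivariant multiplicity $\epsilon_{L_\mu}^{T^\vee}(Z)$ admits a classical interpretation (due to Brion) as the Fourier-Laplace transform of the DH measure of $Z$ at the non-degenerate fixed point $L_\mu$, viewed as a rational function in $\alpha_1,\ldots,\alpha_n$. On the other hand, each $\overline{D}_{\mathbf{j}}$ appearing in the definition of $\overline{D}(b_Z)$ is itself a Laplace transform over an ordered simplex labelled by $\mathbf{j}$. Swapping summation and integration then recasts $\overline{D}(b_Z)$ as the Fourier-Laplace transform of a discrete measure whose atom on the simplex attached to $\mathbf{j}$ has weight $(b_Z, e_{\mathbf{j}})$. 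The remaining and most substantial task is to identify this measure with the DH measure of $Z$: this is achieved using the geometric Satake correspondence to interpret $(b_Z, e_{\mathbf{j}})$ as a geometric intersection number on $Gr_{G^\vee}$ (morally counting intersections of $Z$ with semi-infinite orbits), combined with Knutson's hyperbolic-localization formula from \cite{Knu} which converts these intersection data precisely into the DH measure supported on the MV polytope $P(Z)$ in the tangent cone at $L_\mu$.

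The hard part will clearly be part (2), and within it the identification of the pairings $(b_Z, e_{\mathbf{j}})$ with explicit geometric data on $Z$. Part (1) is essentially combinatorial and reduces to a standard shuffle calculation, but part (2) rests on the simultaneous use of three deep ingredients: the geometric Satake correspondence (to interpret MV cycles representation-theoretically), Brion's theorem relating equivariant multiplicities to Fourier transforms of DH measures, and Knutson's concrete formula for the DH measure of an MV cycle. The technical heart is the compatibility between the expansion of $b_Z$ against the PBW basis and the discrete DH data carried by the polytope $P(Z)$; this matching is precisely what is achieved in \cite[Corollary 10.6]{BKK}.
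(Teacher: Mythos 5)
The paper does not prove this statement: it is quoted verbatim from {{\cite[Lemma 8.3, Corollary 10.6]{BKK}}} and used as a black box, so there is no internal argument to compare against. Your reconstruction of what must be behind the citation is nonetheless sound in outline. For part (1), the reduction via Hopf duality (primitivity of the $e_i$ giving the unshuffle coproduct on $U(\mathfrak{n})$) to the rational-function shuffle identity
$$ \overline{D}_{\mathbf{k}} \cdot \overline{D}_{\mathbf{l}} = \sum_{\mathbf{j} \in \mathbf{k}\, \shuffle\, \mathbf{l}} \overline{D}_{\mathbf{j}} $$
is exactly the right mechanism, and your observation that each $\overline{D}_{\mathbf{j}}$ is the Laplace transform of the indicator of an ordered simplex makes the identity transparent via the Chen-style decomposition of a product of simplices into shuffled simplices. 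For part (2), you correctly identify the three ingredients that {{\cite{BKK}}} assembles: Brion's description of the equivariant multiplicity at a non-degenerate fixed point as the Fourier--Laplace transform of the Duistermaat--Heckman measure, Knutson's formula {{\cite{Knu}}} computing the DH measure of an MV cycle, and the geometric Satake correspondence translating the pairings $(b_Z, e_{\mathbf{j}})$ into geometric data on $Gr_{G^\vee}$. One small caveat: calling Knutson's result a ``hyperbolic-localization formula'' is not quite the standard description (it is closer to a Littelmann-path/crystal-theoretic formula for DH measures), and the precise identification of $(b_Z, e_{\mathbf{j}})$ with measure-theoretic data is the technically delicate heart of {{\cite[Section 10]{BKK}}}, which your sketch passes over quickly. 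But as a blind reconstruction of an externally cited theorem, it captures the architecture correctly.
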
 
  
   Combining Theorem~\ref{thmBKK} with Brion's Proposition~\ref{propBrion}, we get:
   \begin{cor} \label{Dbarsmooth}
   Let $Z$ be a stable MV cycle of weight $- \nu , \nu \in Q_{+}$. Assume $Z$ is smooth at the point $L_{-\nu}$. Then one has 
  $$ \overline{D}(b_Z) = \frac{1}{ \beta_{i_1} \cdots \beta_{i_r}} $$
  where $\beta_{i_1}, \ldots, \beta_{i_r} \in \Phi_{+}$ are the weights of the action of $T^{\vee}(\mathbb{C})$ on the tangent space of $Z$ at $L_{-\nu}$. 
   \end{cor}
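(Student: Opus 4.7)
The plan is to obtain this as an essentially immediate consequence of combining the two results cited just before the statement: Theorem~\ref{thmBKK}, which identifies $\overline{D}(b_Z)$ with the equivariant multiplicity $\epsilon_{L_{-\nu}}^{T^\vee}(Z)$, and Brion's Proposition~\ref{propBrion}, which expresses the equivariant multiplicity of a subvariety smooth at a non-degenerate fixed point as the reciprocal of a product of tangent weights. So the proof is really a verification that the hypotheses of Proposition~\ref{propBrion} are met in the MV-cycle setting.

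First I would set $X := Gr_{G^\vee}$, $T := T^\vee(\mathbb{C})$, $p := L_{-\nu}$ and $Y := Z$. The fixed locus $X^T = \{L_\mu, \mu \in P\}$ and the non-degeneracy of each $L_\mu$ were recalled in the discussion preceding Theorem~\ref{thmBKK}; in particular $L_{-\nu}$ is non-degenerate. It was also recalled that $L_{-\nu}$ lies on every MV cycle of weight $-\nu$, hence $p \in Y$, and that $Y$ is a closed $T^\vee$-invariant subvariety of $Gr_{G^\vee}$. All hypotheses of Proposition~\ref{propBrion} are therefore in place, and the assumed smoothness of $Z$ at $L_{-\nu}$ yields
\[
\epsilon_{L_{-\nu}}^{T^\vee}(Z) \;=\; \frac{1}{\beta_{i_1} \cdots \beta_{i_r}},
\]
where $\beta_{i_1}, \ldots, \beta_{i_r}$ are the weights of the $T^\vee$-action on $T_{L_{-\nu}} Z \subset T_{L_{-\nu}} Gr_{G^\vee}$, and $r = \dim Z$. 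The fact that these weights can be regarded as positive roots follows from the identification $\mathrm{Lie}\, T^\vee(\mathbb{C}) \simeq \mathrm{Lie}\, T(\mathbb{C})$ fixed just above, together with the standard description of the $T^\vee$-weights on the tangent space of $Gr_{G^\vee}$ at $L_{-\nu}$ (which all lie in $\Phi_+$, viewed as linear functions in $\alpha_1, \ldots, \alpha_n$) as noted in the discussion of~\eqref{eqintro1}.

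Finally, applying Theorem~\ref{thmBKK}(2) to the stable MV cycle $Z$ of weight $-\nu$ gives $\overline{D}(b_Z) = \epsilon_{L_{-\nu}}^{T^\vee}(Z)$, and substituting the formula above concludes. There is no substantive obstacle here: the only non-formal points are the non-degeneracy of $L_{-\nu}$, the incidence $L_{-\nu} \in Z$, and the positive-root nature of the tangent weights, all of which are explicitly recorded in the preceding exposition.
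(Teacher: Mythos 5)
Your proof is correct and follows exactly the paper's route: the corollary is stated immediately after the phrase ``Combining Theorem~\ref{thmBKK} with Brion's Proposition~\ref{propBrion}, we get,'' so the intended argument is precisely this combination. Your write-up simply spells out the hypothesis checks (non-degeneracy of $L_{-\nu}$, incidence $L_{-\nu}\in Z$, $T^\vee$-invariance, tangent weights in $\Phi_+$) that the paper leaves implicit in the surrounding exposition.
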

  
   \begin{rk}
 Baumann-Kamnitzer-Knutson prove that the map $\overline{D}$ has the following geometric counterpart ({{\cite[Proposition 8.4]{BKK}}}) . For every regular lement $x$ in $\mathfrak{t}$, the group $N$ acts simply transitively on the subset $x + \mathfrak{n}$ of $\mathfrak{g}$. Hence one can consider the unique  $n_x \in N$ such that $Ad_{n_{x}}(x) = x + e$. Then the algebra morphism  $\overline{D}$ is dual to the morphism of varieties given by 
   $$
    \begin{array}{ccc}
       \mathfrak{t}^{reg} & \longrightarrow & N  \\
                      x  & \longmapsto & n_{x} . 
     \end{array}
     $$
 \end{rk}
 
   \begin{rk}
 The morphism $\overline{D}$ provides a very useful tool to compare various bases of $\mathbb{C}[N]$. For instance Dranowski-Kamnitzer-Morton-Ferguson \cite{BKK} show that the MV basis and the dual semicanonical basis of $\mathbb{C}[N]$ are not the same by exhibiting elements of these bases satisfying some compatibility condition (see {{\cite[Definition 12.1]{BKK}}}) but where $\overline{D}$ nonetheless takes different values. 
 \end{rk}

  \section{Equivariant multiplicities and determinantal modules in $R-gmod$}
  \label{result3}

   This section contains the statements of the main results of this paper. We consider the category $R-gmod$ associated to a simply-laced type Lie algebra $\mathfrak{g}$.  We begin by proving a formula for the equivariant multiplicities of strongly homogeneous modules in this category using Nakada's colored hook formula \cite{Nakada}. We explain why this provides a natural motivation for the study of equivariant multiplicities of the determinantal modules categorifying the flag minors in $\mathbb{C}[N]$.

 \subsection{Equivariant multiplicities and strongly homogeneous modules}    
  \label{prelim}
 
In this subsection we outline a remarkable property of strongly homogeneous modules in the sense of Kleshchev-Ram \cite{KRhom}: the image of their isomorphism classes under the morphism $\overline{D}$ takes a distinguished form, that can be viewed as a generalized version of the Peterson-Proctor hook formula. This property will be useful in Sections~\ref{initAn} and ~\ref{initD4}. 
 
  \bigskip
  
  We consider a  Lie algebra  $\mathfrak{g}$ of finite simply-laced type and we let $W$ denote the corresponding Weyl group. 
  Consider an element of the dual canonical basis of $\mathbb{C}[N]$. It can be written as the isomorphism class of a simple object $M$ in $R-gmod$. Decompose $M$ as the (finite) direct sum of its weight subspaces:
 $$ M = \bigoplus_{\mathbf{j}} e(\mathbf{j}) \cdot M . $$
 Then it follows from the definition of $\overline{D}$ (see~\eqref{deftnDbar}) that its evaluation  on $[M]$ is given by 
 \begin{equation} \label{Dbar}
\overline{D}([M]) = \sum_{\mathbf{j} = (j_1, \ldots ,j_r)} \dim(e(\mathbf{j}) \cdot M) \frac{1}{\alpha_{j_1} (\alpha_{j_1} + \alpha_{j_2}) \cdots (\alpha_{j_1} + \cdots + \alpha_{j_r})} . 
  \end{equation}
In what follows, we will be using this expression to compute the images under $\overline{D}$ of elements of the dual canonical basis. In particular, we can now show that $\overline{D}$ takes remarkable values on the classes of strongly homogeneous modules (see Section~\ref{homogKLR}). 
  
   \begin{prop} \label{propbarD}
   Let $w \in \mathcal{M}in^{+} \subset W$ be a dominant minuscule element of $W$ and let $S(w)$ denote the corresponding strongly homogeneous module in $R-gmod$. Then one has
   $$ \overline{D}([S(w)]) = \prod_{\beta \in \Phi_{+}^{w}} \frac{1}{\beta}  . $$
   \end{prop}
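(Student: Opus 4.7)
The proof plan is essentially a direct matching of two formulas: the explicit weight-space description of strongly homogeneous modules on the one hand, and Nakada's colored hook formula on the other.

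First, I would apply the defining expression~\eqref{Dbar} for $\overline{D}$ evaluated on a simple module, to the module $S(w)$. This requires understanding the weight-space decomposition of $S(w)$, which is precisely what Theorem~\ref{thmKRhom} provides: since $w$ is dominant minuscule it is in particular fully-commutative, hence $S(w) \in \mathcal{H}om$, and the weight spaces $e(\mathbf{j}) \cdot S(w)$ are one-dimensional for $\mathbf{j} \in \text{Red}(w)$ and zero otherwise. Substituting this into~\eqref{Dbar} immediately gives
$$ \overline{D}([S(w)]) = \sum_{(j_1,\ldots,j_N) \in \text{Red}(w)} \frac{1}{\alpha_{j_1}(\alpha_{j_1}+\alpha_{j_2})\cdots (\alpha_{j_1}+\cdots+\alpha_{j_N})}. $$

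Second, I would identify the right-hand side above with the right-hand side of Nakada's colored hook formula (Theorem~\ref{HookNakada}), whose hypothesis requires precisely that $w$ be dominant minuscule. That theorem rewrites the sum over reduced expressions as $\prod_{\beta \in \Phi_{+}^{w}} \frac{1}{\beta}$, yielding the claimed identity.

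There is essentially no obstacle in this argument: the dominant minuscule hypothesis plays two roles, first through Theorem~\ref{thmKRhom} to ensure that $S(w)$ is strongly homogeneous with the clean weight-space structure, and second as the hypothesis needed to invoke Nakada's colored hook formula. The only minor care needed is to check that the conventions on the order of letters in $\mathbf{j}=(j_1,\ldots,j_N)$ used in the definition of $\overline{D}_{\mathbf{j}}$ agree with those in the statement of Theorem~\ref{HookNakada}; both index reduced expressions left-to-right and both produce the same cumulative sums $\alpha_{j_1}+\cdots+\alpha_{j_k}$, so no reindexing is necessary. Hence the proof reduces to a one-line combination of Theorems~\ref{thmKRhom} and~\ref{HookNakada}.
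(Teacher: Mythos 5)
Your proposal is correct and matches the paper's proof exactly: both invoke Theorem~\ref{thmKRhom} to read off the one-dimensional weight spaces of $S(w)$ indexed by $\text{Red}(w)$, substitute into Equation~\eqref{Dbar}, and recognize the resulting sum as the right-hand side of Nakada's colored hook formula (Theorem~\ref{HookNakada}). Nothing to add.
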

   
    \begin{proof}
    By Theorem~\ref{thmKRhom}, all the weight subspaces of $S(w)$ are one-dimensional and are in bijection with the reduced expressions of $w$. Hence it follows from  Equation~\eqref{Dbar} that 
    $$  \overline{D}([S(w)]) =  \sum_{(j_1, \ldots , j_N) \in \text{Red}(w)} \frac{1}{\alpha_{j_1}} \frac{1}{\alpha_{j_1} + \alpha_{j_2}} \cdots \frac{1}{\alpha_{j_1} + \alpha_{j_2} + \cdots + \alpha_{j_N}} . $$
  This is exactly the right hand side of Nakada's colored hook formula~\eqref{eqNakada} and thus Theorem~\ref{HookNakada} implies
  $$ \overline{D}([S(w)]) = \prod_{\beta \in \Phi_{+}^{w}} \frac{1}{\beta}  . $$
    \end{proof}
   
   \smallskip
   
  The following example shows that this property fails if one considers simple modules (even homogeneous ones) that are not strongly homogeneous.
  
  \begin{ex} \label{examplehomog}
   Consider for instance the simple module $L(231)$ in type $A_3$ (with the natural ordering on good Lyndon words). This module is homogeneous and corresponds to $s_2s_3s_1 \in W$ via the bijection of Theorem~\ref{thmKRhom}. This element is minuscule but not dominant minuscule. It has two reduced expressions, namely $(2,3,1)$ and $(2,1,3)$ and thus one has 
 \begin{align*}
  \overline{D}([S(s_2s_3s_1)]) &= \frac{1}{\alpha_2(\alpha_2 + \alpha_3)(\alpha_1+\alpha_2 + \alpha_3)} + \frac{1}{\alpha_2(\alpha_1 + \alpha_2)(\alpha_1+\alpha_2 + \alpha_3)}  \\
   &= \frac{\alpha_1 + 2\alpha_2 + \alpha_3}{\alpha_2(\alpha_1 + \alpha_2)(\alpha_2 + \alpha_3)(\alpha_1+\alpha_2 + \alpha_3)} . 
   \end{align*}
 \end{ex}

 The main motivations for the present work come from the observations that there seems to be an intimate connection between the set of strongly homogeneous modules $S(w), w \in  \mathcal{M}in^{+}$ and the set determinantal modules categorifying the flag minors of $\mathbb{C}[N]$ (see Section~\ref{reminddeterm}). We will go back to this in the last Section of this paper.
 
 \begin{ex} \label{exampleA3}
    Let us detail the type $A_3$ case as an example. There are $6$ positive roots and the cluster algebra $C[N]$ has 14 seeds. There are $3! = 6$ total orderings on $I = \{1,2,3\}$. We can compute the determinantal modules of the corresponding standard seeds using Theorem~\ref{thmcasbi2}. For instance, the choice of $1<2<3$ yields the following ordering on $\Phi_{+}$:
       $$ \alpha_1 < \alpha_1 + \alpha_2 < \alpha_1  + \alpha_2 +  \alpha_3 <  \alpha_2 <  \alpha_2 +  \alpha_3 < \alpha_3 .$$
       The corresponding reduced expression of $w_0$ is $ {\bf i} = (1,2,3,1,2,1)$ and the determinantal modules of the seed $\mathcal{S}^{\bf i}$ are given by 
  $$ L(1) \qquad L(12) \qquad L(123) \qquad L(21) \qquad L(2312) \qquad L(321) . $$
  Similarly one can repeat this procedure for the other orderings on $I$.  We obtain five distinct standard seeds. It turns out that this is enough to get all the flag minors. We can observe  that the determinantal modules obtained this way are all strongly homogeneous and correspond to the  following Weyl group elements
  $$ s_1, s_2, s_3, s_1s_2, s_2s_3, s_1s_2s_3, s_3s_1s_2, s_3s_2s_1, s_2s_3s_1s_2 . $$
  It is straightforward to check that the homogeneous modules $S(w)$ for $w$ in this list are exactly the prime strongly homogeneous modules in $R-gmod$ (Indeed, the only dominant minuscule element of $\mathfrak{S}_3$ that is missing in this list is $s_3s_1$, but $S(s_3s_1)=L(31)=L(3) \circ L(1)$ is not prime). Thus the determinantal modules categorifying the flag minors coincide with the prime strongly homogeneous modules in this case. 
  \end{ex}
   
   Another interesting case  is where $\mathfrak{g}$ is of type $D_4$, which will be studied in detail in Section~\ref{initD4}. Although the determinantal modules are not necessarily homogeneous in this case, it seems that prime strongly homogeneous modules still categorify certain flag minors.Therefore it is natural to ask the following:
   
    \begin{Question} \label{question}
 Could the distinguished value of $\overline{D}$ be a characteristic of the flag minors, rather than a specific property of classes of strongly homogeneous modules ?  
     \end{Question}
     
      Comparing with the geometric point of view, we know from Corollary~\ref{Dbarsmooth} that the values of $\overline{D}$ take a remarkable form on the  elements of the MV basis corresponding to MV cycles satisfying certain smoothness properties. Thus given a MV cycle $Z$ such that $b_Z$ is a flag minor of $\mathbb{C}[N]$, this also raises the question of the smoothness of $Z$ at $L_{\mu}$ where $\mu \in - Q_{+}$ denotes the weight of $Z$. 
   
   \subsection{The main results}    
      
  Motivated by the discussion of the previous Section, we now state the main results of this paper. They essentially split into two main parts: one consists in exhibiting  several good properties on equivariant multiplicities that propagate under certain cluster mutations, and the other one consists in checking that these properties are indeed satisfied for a particular standard seed in $\mathbb{C}[N]$.     
   Question~\ref{question} suggests the following conjecture:
      
    \begin{conj} \label{weakconj}
       Let $\mathfrak{g}$ be a Lie algebra of finite simply-laced type and let $x$ be a flag minor in $\mathbb{C}[N]$. Then the equivariant multiplicity of $x$ is of the form
       $$ \overline{D}(x) =  \frac{1}{\prod_{\beta \in \Phi_{+}} \beta^{n_{\beta}(x)}}  $$
      where $n_{\beta}(x)$ is a nonnegative integer for every $\beta \in \Phi_{+}$. 
       \end{conj}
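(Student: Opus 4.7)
The plan is anchored on Theorem~\ref{intro1}, which reduces Conjecture~\ref{weakconj} in a fixed simply-laced type to the verification of Properties~(A), (B), (C) for a \emph{single} standard seed. Thus the structural work is already complete: what remains for a full proof is to extend Theorem~\ref{intro2} from $A_n$ and $D_4$ to the remaining types $D_n \, (n\geq 5)$ and $E_6, E_7, E_8$. I therefore propose to treat Conjecture~\ref{weakconj} as a statement about one distinguished seed per type, and to attack it in parallel through two complementary routes: an algebraic route via quiver Hecke algebras and a geometric route via MV cycles.

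For the algebraic route I would take $\mathbf{i}_0 = \mathbf{i}_{<}$ for a convex order $<$ adapted to an orientation of the Dynkin diagram. By Theorem~\ref{thmcasbi2} each determinantal module $M_k^{\mathbf{i}_0}$ then equals $L(\mu_k)$ for a dominant word $\mu_k$ of very constrained (binary-exponent) shape. The first step is to compute the graded characters of these modules recursively via the quantum shuffle product formula (Proposition~\ref{quantumshuffle}), starting from the cuspidal modules $S_\beta$. The second step is to apply Equation~\eqref{Dbar} and show the sum collapses to $1/\prod_\beta \beta^{n_\beta}$ via a telescoping identity matching the relation in Property~(B); that relation should be read as a recursive formula for the $P_j$ that propagates from the lowest-index flag minors (which are themselves strongly homogeneous, so Proposition~\ref{propbarD} initiates the recursion) to all flag minors by a colored-hook-type argument generalising Theorem~\ref{HookNakada} beyond the dominant minuscule setting. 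Property~(C) would then be read off the resulting explicit form of the $P_j$ by comparing $P_j$ and $P_{j_+(\mathbf{i})}$ in light of the fact that $[M_{j_+(\mathbf{i})}^{\mathbf{i}_0}]/[M_j^{\mathbf{i}_0}]$ is a single crystal step at the level of dual canonical bases.

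The main obstacle in this algebraic approach is that in types $D_n \, (n\geq 5)$ and the exceptional types, the modules $M_k^{\mathbf{i}_0}$ are no longer multiplicity-free; the weight-space dimensions entering Equation~\eqref{Dbar} exceed one, and the collapse to a single reciprocal of a product of roots relies on subtle cancellations for which no uniform combinatorial identity is currently available. The conceptual fallback, and in my view the most promising route to a uniform proof, is geometric. By Theorem~\ref{thmBKK}(2) and Corollary~\ref{Dbarsmooth}, writing a flag minor $x = b_Z$ for a stable MV cycle $Z$ of weight $-\nu \in -Q_+$, Conjecture~\ref{weakconj} for $x$ is equivalent to smoothness of $Z$ at the torus-fixed point $L_{-\nu}$; since all tangent weights at such points in $Gr_{G^\vee}$ are positive roots, the nonnegativity of the $n_\beta(x)$ follows for free. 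One then aims to prove the relevant smoothness directly: MV cycles associated to flag minors project, via the geometric Satake correspondence, to irreducible components of $\overline{Gr_{G^\vee}^{\omega_i} \cap S_-^{\mu}}$ for fundamental coweights $\omega_i$, and these intersections admit explicit Schubert-like descriptions in every simply-laced type through which smoothness at $L_{-\nu}$ can be analysed uniformly. Combining such a geometric smoothness statement with the algebraic cross-check from the initial seed would yield Conjecture~\ref{weakconj} in full generality, not just for $A_n$ and $D_4$.
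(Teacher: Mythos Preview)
The statement you are addressing is a \emph{conjecture}; the paper does not prove it in full generality and explicitly presents it as open beyond types $A_n$ and $D_4$. The paper's contribution is precisely the combination of Theorems~\ref{thmpropag} and~\ref{thminitcond}, which together establish Conjecture~\ref{weakconj} only in those two types. Your proposal correctly identifies this architecture and then outlines how one \emph{might} extend it, but what you have written is a research plan, not a proof: neither the algebraic route nor the geometric route is carried out, and you yourself flag the algebraic one as blocked by the lack of a uniform cancellation identity in higher types.

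The geometric route contains a genuine gap. You write a flag minor as $x = b_Z$ for a stable MV cycle $Z$ and then invoke Corollary~\ref{Dbarsmooth}. But flag minors are elements of the \emph{dual canonical basis}, and it is not known in general that they coincide with MV basis elements; the paper only raises this as a question (see the discussion after Question~\ref{question}), and indeed \cite{BKK} produces explicit discrepancies between the MV basis and other good bases of $\mathbb{C}[N]$. Without this identification, Theorem~\ref{thmBKK}(2) does not apply to $\overline{D}(x)$. Moreover, even granting $x = b_Z$, your claimed \emph{equivalence} between Conjecture~\ref{weakconj} and smoothness of $Z$ at $L_{-\nu}$ is unjustified: Proposition~\ref{propBrion} and Corollary~\ref{Dbarsmooth} give only the implication ``smooth $\Rightarrow$ reciprocal of a product of roots''; the converse holds only when $Y=X$ (Remark after Proposition~\ref{propBrion}). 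So smoothness would suffice but is not known to be necessary, and your sketch of how to prove smoothness (``Schubert-like descriptions \ldots\ through which smoothness at $L_{-\nu}$ can be analysed uniformly'') is not an argument. In short: for $A_n$ and $D_4$ your plan coincides with what the paper actually does; for the remaining types it is a reasonable wish list but not a proof.
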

       
        The strategy for proving this conjecture is the following. We know that the standard seeds of $\mathbb{C}[N]$ are related to each other by certain cluster mutations, each of which correspond to a change of reduced expression of $w_0$ i.e. there is $k \in \{1, \ldots , N \} $ such that $i_k=p , i_{k+1} = q$ and $i_{k+2}=p$  with $p \cdot q = -1$:
        $$
     \begin{array}{ccc}
         \mathbf{i} = ( \ldots , p , q , p , \ldots )  \quad &  \leadsto & \quad  \mathbf{i}' = ( \ldots , q , p , q , \ldots )  \\
         \s^{\mathbf{i}} = ((x_1, \ldots , x_N) , Q^{\mathbf{i}}) \quad  & \leadsto &  \quad \s^{\mathbf{i}'} = ((x_1, \ldots x_{k-1} , x'_k  , x_{k+1} , \ldots ,  x_N) , Q^{\mathbf{i}'}) . 
             \end{array}
         $$
  The proof is composed of two steps:
 
    \begin{itemize}
     \item  Firstly, we show that the desired result propagates under these cluster mutations. Thus we start by assuming that $\overline{D}(x_j)$ is of the form $1/P_j$ for every $1 \leq j \leq N$.  We want to show that $\overline{D}(x'_k)$ is of the form $1/P'_k$. As $\overline{D}$ is an algebra morphism, one can immediately express $\overline{D}(x'_k)$ in terms of the $P_j$.  But then it is not hard to see that $\overline{D}(x'_k)$ has no reason to take the desired form, at least without any further assumptions on the $P_j$. Therefore, we consider certain relations between the $P_j$ entirely determined by $\mathbf{i}$ implying that $\overline{D}(x'_k)$ is of the form  $1/P'_k$. We prove that these relations are preserved under mutation, i.e. the polynomials $P_1, \ldots , P_{k-1}, P'_k, P_{k+1} , \ldots , P_N$ satisfy the  corresponding relations determined by $\mathbf{i}'$.  Thus this procedure can be iterated to arbitrary sequences of mutations from one standard seed to another. \\
   
    \item  Secondly, we  shall exhibit a particular standard seed in $\mathbb{C}[N]$ whose cluster variables $x_j  , j \in \{1, \ldots , N \}$ have equivariant multiplicities of the form $1/P_j$, and check that the polynomials $P_j$ satisfy the relations required in the previous step. 
    \end{itemize}
    
    The first main result of this paper consists in proving the first step for all simply-laced types. 
    
     \begin{thm} \label{thmpropag}
     Let $\mathfrak{g}$ be any simply-laced type simple Lie algebra and let $\mathbf{i}$ and $\mathbf{i}'$ be two reduced expressions of $w_0$ related as above.
   Assume that the cluster variables  $x_1, \ldots , x_N$ of the  standard seed $\s^{\mathbf{i}}$ satisfy  the following properties:
     \begin{enumerate}[label=(\Alph*)]
   \item For every $1 \leq j \leq N$, the rational fraction $\overline{D}(x_j)$ is of the form $1/P_j$ where $P_j$ is a product of positive roots. 
   \item For every $1 \leq j \leq N$ one has  
   $$ P_j P_{j_{-}(\mathbf{i})} = \beta_j  \prod_{\substack{ l<j<l_{+}(\mathbf{i}) \\ i_l \cdot i_j =-1}} P_l . $$  
    with $P_0 := 1$. 
   \item For every $j \in J_{ex}$ and every $1 \leq i \leq N$, one has $ (\beta_i ; P_j) - (\beta_i ; P_{j_{+}(\mathbf{i})}) \leq 1 $. 
  \end{enumerate}
  Then the cluster variables of $\s^{\mathbf{i}'}$ satisfy the analogous properties determined by $\mathbf{i}'$. 
  \end{thm}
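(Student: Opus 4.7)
The braid move $\mathbf{i} \leadsto \mathbf{i}'$ at positions $k, k+1, k+2$ corresponds to a cluster mutation in some direction, which I will denote by $k$ following the notation of the theorem. By the monoidal categorification of Kang-Kashiwara-Kim-Oh \cite{KKKO} (equivalently, by the GLS determinantal identities \cite{GLS} for flag minors), the exchange relation realising this mutation takes the form
\[
x_k \, x'_k \;=\; \prod_{l \in A} x_l \;+\; \prod_{l \in B} x_l ,
\]
where $A$ and $B$ are two explicit subsets of indices read off from the quiver $Q^{\mathbf{i}}$ and the data of $\mathbf{i}$ at positions around $k$. Applying the algebra morphism $\overline{D}$ (see Theorem~\ref{thmBKK}) to both sides and using Property (A) for $\s^{\mathbf{i}}$, I obtain
\[
\overline{D}(x'_k) \;=\; \frac{P_k}{\prod_{l \in A} P_l} \;+\; \frac{P_k}{\prod_{l \in B} P_l}.
\]

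\textbf{Core simplification.} The heart of the proof consists in showing that this right-hand side collapses to a single fraction of the form $1/P'_k$, with $P'_k$ a product of positive roots. Putting both summands over the common denominator $\prod_{l \in A \cup B} P_l$ produces a numerator that is a sum of two monomials in the $P_l$. The key algebraic input is Property (B): applied at $j=k$, it rewrites the product $P_k P_{k_-}$ as a single positive root $\beta_k$ times a product of neighbouring $P_l$, and iterating this identity at the positions of $\mathbf{i}$ adjacent to $k$ causes the numerator to telescope down to the positive root $\beta'_k$ produced by the braid move, multiplied by a common monomial. Property~(C) intervenes here to guarantee that dividing the common monomial out does not create positive roots with negative exponents in $P'_k$: without the bound $(\beta_i ; P_j)-(\beta_i ; P_{j_+(\mathbf{i})}) \leq 1$, a root could appear with multiplicity $\geq 2$ in the common denominator and survive the cancellation with the wrong sign. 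This step establishes Property (A) for $\s^{\mathbf{i}'}$.

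\textbf{Propagation of (B) and (C).} Once $P'_k$ is known explicitly in terms of the $P_l$ and $\beta_j$, I would check directly that the tuple $(P_1,\dots,P_{k-1},P'_k,P_{k+1},\dots,P_N)$ satisfies Properties (B) and (C) with respect to the combinatorics of $\mathbf{i}'$. The data $j_\pm$, $l_+$, and the pairs $(l,j)$ with $i_l \cdot i_j=-1$ are affected by the braid move only for indices lying in a small window around $k$, so for indices outside this window the (B) and (C) relations are identical to those for $\mathbf{i}$ and are inherited automatically. For the (finitely many) indices inside the window, a case-by-case verification using the explicit formula for $P'_k$ from the second step, combined with Properties (B) and (C) for $\mathbf{i}$ at neighbouring positions, yields the required relations. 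In particular, the new relation (B) at position $k$ in $\mathbf{i}'$ is, by construction, the one used to derive the form of $P'_k$, while at the two flanking positions it follows by substituting the formula for $P'_k$ into the old (B) relations.

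\textbf{Main obstacle.} The principal difficulty is the second step: although Property (B) is tailored to make the telescoping work, one must identify exactly which positive roots are produced at each reduction step, and control the precise shape of $A$ and $B$ in terms of the combinatorics of $\mathbf{i}$ at positions $k, k+1, k+2$. The types $D_n$ and $E_n$ are the delicate ones since the $P_j$ may have repeated factors, and the interplay between (B) and (C) becomes essential to rule out spurious denominators in $P'_k$; this is also the reason Property (C) is needed as a hypothesis and must be propagated alongside (A) and (B).
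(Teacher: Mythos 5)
Your outline captures the correct overall structure — apply $\overline{D}$ to the exchange relation, use Property (B) to collapse the sum to a single fraction, then verify (B') and (C') around the mutation site — which is also the skeleton of the paper's proof. However, as written it has several genuine gaps that prevent it from being a proof.

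\textbf{The divisibility $P_k \mid \widetilde{P}_k$ is asserted, not proved.} After the elementary simplification (which in the paper is $\overline{D}(x'_k)/P_k = (1/P_{\text{in}(k)})(1 + \beta_{k+2}/\beta_k) = \beta_{k+1}/(\beta_k P_{\text{in}(k)})$, using $\beta_k + \beta_{k+2} = \beta_{k+1}$), one is left needing to show that $\beta_{k+1} P_k$ divides $\widetilde{P}_k := \beta_k P_{\text{in}(k)}$, so that $P'_k := \widetilde{P}_k/(\beta_{k+1} P_k)$ is a genuine product of positive roots. This is the content of the paper's Proposition~\ref{division}, and it is the technically hardest step: it proceeds, for each positive root $\beta_i$ with $i < l$, by assuming $(\beta_i; P_l) > (\beta_i; P_{l_+(\mathbf{i})})$, deriving from (B) and Lemma~\ref{Dbarhat} a contradiction unless $(\beta_i; P_{\text{inord}(l)}) > 0$, and then invoking (C) to bound $(\beta_i; P_l) - (\beta_i; P_{l_+(\mathbf{i})}) \leq 1$. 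Your paragraph says only that (C) ``intervenes here to guarantee'' the cancellation; that observation is correct in spirit, but the actual mechanism — showing that a root appearing with excess multiplicity in $P_l$ must already be present in $P_{\text{inord}(l)}$ — is the nontrivial argument and is missing.

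\textbf{The ``telescoping'' is imprecise.} You say the numerator ``telescopes down to the positive root $\beta'_k$ produced by the braid move.'' In fact $\beta'_k = \beta_{k+2}$, whereas the root that materializes is $\beta_{k+1} = \beta_k + \beta_{k+2}$ (Lemma~\ref{beta}), and the precise final formula is $P'_k = \widetilde{P}_k / (\beta_{k+1} P_k)$. Getting the identity $\beta_k + \beta_{k+2} = \beta_{k+1}$ right is essential, because $\beta_{k+1}$ must be shown to divide $P_{\text{in}(k)}$ (via Lemma~\ref{cormult} and Lemma~\ref{Dbarhat}), and that divisibility is a separate small step.

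\textbf{The reindexing is missed.} The tuple of polynomials for $\s^{\mathbf{i}'}$ is not $(P_1,\dots,P_{k-1},P'_k,P_{k+1},\dots,P_N)$: because the braid move swaps the two flag minors at positions $k+1$ and $k+2$ (Lemma~\ref{mutbetaM}), one has $P'_{k+1} = P_{k+2}$ and $P'_{k+2} = P_{k+1}$. If this transposition is ignored, the verifications of (B') and (C') at $j \in \{k, k+1, k+2\}$ do not close up correctly.

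\textbf{The verification of (B') and (C') is not carried out.} You correctly observe that for indices away from the window the relations are inherited, but you do not work out the local cases, and the claim that ``the new relation (B) at position $k$ in $\mathbf{i}'$ is, by construction, the one used to derive the form of $P'_k$'' is not accurate: $P'_k$ is derived from the exchange relation, while (B') at $k$ is a relation involving $P'_k$, $P_s$ and the polynomials over $I'(k)$, which the paper checks by a careful set-theoretic comparison of $I(k+1)$ and $I'(k)$ and repeated substitution. Similarly, propagating (C') requires re-deriving Lemma~\ref{cormult} for $\s^{\mathbf{i}'}$ from (B') and then a case analysis over $j \in \{r, s, k\}$, none of which is sketched.

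In short, the strategy is right, but the three essential ingredients — the divisibility proposition that uses (C), the identity $\beta_k + \beta_{k+2} = \beta_{k+1}$, and the index transposition for positions $k+1$ and $k+2$ — are either missing or stated incorrectly, and the local verifications of (B') and (C') remain to be done.
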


   \begin{rk}
Note that we allow the left hand-side of the inequality (C) to be negative. Moreover, this inequality is relevant only in types $D_n$ and $E_6, E_7, E_8$ as the $P_j$ are always multiplicity-free in type $A_n$ (for any standard seed). 
\end{rk}
  
   \begin{rk}  \label{rkBalgo}
   Let us outline a curious consequence of Property (B). The polynomial $P_{j_{-}}$ as well as all the polynomials involved in the right hand side are all of the form $P_l$ for some $l$ strictly smaller than $j$. Thus if the Property (B) holds for the standard seed $\s^{\mathbf{i}}$, then for every $j$, $P_j$ is entirely determined by $\beta_j$ and by the $P_l , l<j$. By a straightforward induction, this implies that the polynomials $P_1 , \ldots ,P_N$ are in fact entirely determined by the data of the $P_j$ such that $j_{-}=0$, i.e. the positions of the first occurrence in $\mathbf{i}$ of each letter of $I$. Interestingly, these polynomials are in general easy to compute by hand, as illustrated in the next example. For instance it follows from {{\cite[Proposition 3.14]{KK}}} that the corresponding determinantal modules are cuspidal for the convex ordering on $\Phi_{+}$ associated to $\mathbf{i}$.  In particular when $\mathbf{i}$ comes from a total ordering on $I$, Theorem~\ref{thmcasbi2} explicitly provides the good Lyndon words associated to these modules. In simply-laced types such cuspidal modules are often well understood  (see for example {{\cite[Section 8]{KR}}}). Hence  the value of $\overline{D}$ on their isomorphism class can be computed using Equation~\eqref{Dbar}. Thus Property (B) provides an algorithm allowing to compute the equivariant multiplicities of all the flag minors belonging to one standard seed, which was a non-trivial computation a priori. 
    \end{rk}
  
 \begin{ex}
 Let us provide an example of a direct computation of $P_j$ for $j$ such that $j_{-}(\mathbf{i}) = 0$. 
 
  Consider $\mathfrak{g}$ of type $D_4$ and take $\mathbf{i} = \mathbf{i}_{nat} = (1,3,2,4,3,1,4,3,2,4,3,4)$ as in Section~\ref{initD4} below. Consider for instance the first occurrence of the letter $3$, i.e. $j=2$. The determinantal module $M_2^{\mathbf{i}_{nat}}$ is the cuspidal module $L(13)$ (with respect to the natural ordering). This can be deduced from Theorem~\ref{thmcasbi2}, or can be checked by hand using for instance {{\cite[Proposition 10.2.4]{KKKO}}}. This module is one-dimensional as a vector space (see for instance {{\cite[Section 8.7]{KR}}}) and thus its weight subspace decomposition is simply $ L(13) = \mathbb{C} \cdot \mathbf{v}_{13}$ with $e(13) \cdot \mathbf{v}_{13} = \mathbf{v}_{13}$. Thus it is immediate to compute its image under $\overline{D}$ using Equation~\ref{Dbar} and we get $P_2 = \alpha_1(\alpha_1 + \alpha_3)$. 
  \end{ex}
 
  The second main result of this paper is to check the second step for types $A_n$  and $D_4$. 
  
   \begin{thm}  \label{thminitcond}
 Assume $\mathfrak{g}$ is of type $A_n , n \geq 1$ or $D_4$. Let $\mathbf{i}_{nat}$ denote the reduced expression of $w_0$ corresponding to the natural ordering on the vertices of the Dynkin diagram of $\mathfrak{g}$. Then Properties (A), (B) and (C) hold for the standard seed $\s^{\mathbf{i}_{nat}}$. 
 \end{thm}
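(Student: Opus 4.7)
The plan is to handle types $A_n$ and $D_4$ separately, in both cases exploiting the fact that $\mathbf{i}_{nat}$ arises from a total ordering on the simple roots, so that Theorem~\ref{thmcasbi2} provides an explicit description of each determinantal module $M_k^{\mathbf{i}_{nat}}$ as $L(\mu_k)$ for a dominant word $\mu_k$ of very specific shape: a decreasing product of the good Lyndon words $\mathbf{j}_l$ indexed by those positions $l \leq k$ with $i_l = i_k$. This description is what allows a concrete evaluation of $\overline{D}(x_k)$ via Equation~\eqref{Dbar}.

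For type $A_n$, the plan is to show that each $M_k^{\mathbf{i}_{nat}}$ is strongly homogeneous, i.e.\ isomorphic to some $S(w_k)$ with $w_k \in \mathcal{M}in^{+}$. One route is to identify $w_k$ directly from the combinatorics of $\mu_k$: interval good Lyndon words in type $A_n$ correspond to simple reflections in a chain, and the decreasing stacking forced by Theorem~\ref{thmcasbi2} yields a dominant minuscule element via Stembridge's criterion (Theorem~\ref{Stembridge}). Once this is established, Property~(A) is immediate from Proposition~\ref{propbarD} with $P_j = \prod_{\beta \in \Phi_{+}^{w_j}} \beta$. Property~(B) then translates into a purely combinatorial identity among the sets $\Phi_{+}^{w_j}$, which can be verified by tracking how $\Phi_{+}^{w_j}$ and $\Phi_{+}^{w_{j_{-}}}$ differ in terms of positive roots indexed by the positions strictly between $j_{-}$ and $j_{+}$ with adjacent color. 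Property~(C) is automatic since the relevant modules have multiplicity-free weight, making each $P_j$ square-free.

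For type $D_4$ the approach is essentially a finite direct computation over the twelve positions of $\mathbf{i}_{nat}$. First, list the twelve dominant words $\mu_k$ via Theorem~\ref{thmcasbi2}. Whenever $L(\mu_k)$ happens to be strongly homogeneous (e.g.\ when $\mu_k$ is a single good Lyndon word, yielding a cuspidal module, or when $\mu_k$ matches a prime strongly homogeneous module as suggested by the discussion around Example~\ref{exampleA3}), apply Proposition~\ref{propbarD} directly. For the remaining cases, the plan is to compute the graded character of $L(\mu_k)$ using the quantum shuffle formula (Proposition~\ref{quantumshuffle}) from the known characters of type $D_n$ cuspidal modules, and then evaluate $\overline{D}([L(\mu_k)])$ via Equation~\eqref{Dbar}. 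To reduce the workload, one invokes Remark~\ref{rkBalgo}: compute $P_j$ directly only for the four indices $j$ with $j_{-}(\mathbf{i}_{nat})=0$, define candidate $P_j$'s for all other $j$ by the recursion imposed by Property~(B), and then verify for each $k$ that $1/P_k$ matches the direct computation of $\overline{D}(x_k)$. Property~(C) is then checked case-by-case against the explicit list of $P_j$'s.

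The main obstacle will be the $D_4$ case, and specifically those determinantal modules that are \emph{not} strongly homogeneous: their graded characters involve weight spaces of dimension greater than one, so the sum in~\eqref{Dbar} is a genuine rational expression in $\alpha_1, \alpha_2, \alpha_3, \alpha_4$, and showing that it telescopes into $1/P_k$ for an actual \emph{product of positive roots} $P_k$ is a non-trivial cancellation phenomenon. Verifying that these cancellations occur for every such $k$, that the resulting $P_k$'s are compatible with the recursion predicted by Property~(B), and that they satisfy the multiplicity bound of Property~(C), is where the bulk of the case analysis in Section~\ref{initD4} will lie.
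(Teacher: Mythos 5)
Your type $A_n$ argument matches the paper's: one shows that each determinantal module $M_k^{\mathbf{i}_{nat}}$, whose dominant word is $[k;r+k-1]\cdots[1;r]$ by Theorem~\ref{thmcasbi2} (or \cite{Casbi}), is $S(w)$ for an explicit dominant minuscule $w=s_k\cdots s_{r+k-1}\,s_{k-1}\cdots s_{r+k-2}\cdots s_1\cdots s_r$; Proposition~\ref{propbarD} then gives $P_j=\prod_{\beta\in\Phi_+^{w}}\beta$, an inductive description of $\Phi_+^{w}$ yields (B), and (C) is vacuous since every $P_j$ is square-free. This is exactly Section~\ref{initAn}.

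For $D_4$, however, there is a genuine gap in the step you describe as ``compute the graded character of $L(\mu_k)$ using the quantum shuffle formula from the known characters of type $D_n$ cuspidal modules.'' Proposition~\ref{quantumshuffle} computes $ch_q\bigl(L(\mathbf{j}_1)\circ\cdots\circ L(\mathbf{j}_m)\bigr)$, not $ch_q\bigl(\text{hd}(L(\mathbf{j}_1)\circ\cdots\circ L(\mathbf{j}_m))\bigr)$, and for the three non--strongly-homogeneous determinantal modules ($\mu_5=134213$, $\mu_8=23134213$, $\mu_{11}=3423134213$) the convolution product is not simple, so the shuffle product badly overcounts. The paper closes this gap by a different mechanism: since the determinantal modules are real and commute with every $L(i)$ for $i\in I$ (by the monoidal categorification results of \cite{KKKO}), one first shows the relevant renormalized $R$-matrices have degree zero (Lemma~\ref{lemcomut}), giving the on-the-nose graded identity $(i)\circ ch_q(M)=ch_q(M)\circ(i)$; this, applied for all $i$, constrains the character of the head so tightly that it is determined, with the shuffle product used only as a sieve to say which weight words can possibly appear. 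Without this commutation constraint your ``direct computation of $\overline{D}(x_k)$'' for $k\in\{5,8,11\}$ does not get off the ground, and consequently the Remark~\ref{rkBalgo}-style recursion gives candidate $P_j$'s but no independent means to confirm them for the non-homogeneous indices. You correctly flag that the non-homogeneous cases are the obstacle, but the idea that actually resolves them --- leveraging $q$-commutation with cuspidals to pin down the character of a simple head --- is missing.
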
 

In particular, this proves Conjecture~\ref{weakconj} in types $A_n$ and $D_4$. 
Section~\ref{propag} is devoted to the proof of Theorem~\ref{thmpropag} and Sections~\ref{initAn} and~\ref{initD4} respectively contain the proofs of Theorem~\ref{thminitcond} in types $A_n$ and $D_4$. 

 As we saw in Section~\ref{prelim}, Property (A) fails for homogeneous modules that are not strongly homogeneous. Further computations in types $A_n$ and $D_4$ tend to suggest that the evaluation of  $\overline{D}$ on cluster variables of $\mathbb{C}[N]$  fails to take the remarkable form of (A) for the cluster variables that are not flag minors (i.e. that do not appear in any of the standard seeds $\s^{\mathbf{i}} , \mathbf{i} \in \text{Red}(w_0)$. Therefore, we propose the following stronger version of Conjecture~\ref{weakconj}. 
  
  \begin{conj} \label{strongconj}
   Let $\mathfrak{g}$ be a Lie algebra of finite simply-laced type and let $w_0$ be the longest element of the associated Weyl group. Then the flag minors are exactly the cluster variables of $\mathbb{C}[N]$ whose image under $\overline{D}$ are of the form  
   $$  \frac{1}{\prod_{\beta \in \Phi_{+}} \beta^{n_{\beta}}} . $$
   for some family of nonnegative integers $(n_{\beta})_{\beta \in \Phi_{+}}$.
 \end{conj}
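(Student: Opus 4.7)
The statement of Conjecture~\ref{strongconj} is a biconditional refining Conjecture~\ref{weakconj}. The forward direction, asserting that every flag minor $x$ has $\overline{D}(x) = 1/\prod_{\beta} \beta^{n_\beta(x)}$, is exactly Conjecture~\ref{weakconj} and is already established in types $A_n$ and $D_4$ by Theorem~\ref{mainthmintro}. The genuinely new content to be proved is the converse: any cluster variable of $\mathbb{C}[N]$ whose image under $\overline{D}$ has the distinguished monomial-denominator form in positive roots must be a flag minor.

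My plan for the converse is to argue by contrapositive, propagating the distinguished form through cluster mutations in the reverse direction of Theorem~\ref{thmpropag}. Fix a cluster variable $x$ and a seed $\mathcal{S}$ containing $x$; by connectedness of the exchange graph there is a mutation sequence linking $\mathcal{S}$ to some standard seed $\mathcal{S}^{\mathbf{i}}$. Since $\overline{D}$ is an algebra morphism (Theorem~\ref{thmBKK}), each exchange relation $x_k x'_k = M_1 + M_2$ translates into a rational identity on $\overline{D}$-values, and the forward direction combined with Theorem~\ref{thmpropag} ensures that the cluster variables of $\mathcal{S}^{\mathbf{i}}$ have monomial-denominator $\overline{D}$-values satisfying Properties (B) and (C). I would then try to show that whenever a single mutation in direction $k$ preserves the monomial-denominator form (for both $x_k$ and its replacement $x'_k$), the combinatorial collapse forcing this must coincide with the collapse used in the proof of Theorem~\ref{thmpropag}, and therefore the mutated seed is itself standard. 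Equivalently, the aim is to show that the full subgraph of the exchange graph whose vertices are seeds consisting entirely of cluster variables with monomial-denominator $\overline{D}$-values coincides with the subgraph of standard seeds.

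The principal obstacle is that Properties (B) and (C) are coherence conditions on an entire seed, while the converse hypothesis concerns a single cluster variable at a time. To bridge this gap, my strategy would be a local-to-global argument: first show that if $x$ has monomial-denominator $\overline{D}$-value and is obtained from a cluster variable $x^{\circ}$ of a standard seed by a single mutation, then applying $\overline{D}$ to the exchange $x \cdot x^{\circ} = M_1 + M_2$ forces both monomials $M_i$ to also have monomial-denominator values, and a case analysis of the possible collapses (governed by the weight-subspace contributions in Equation~\eqref{Dbar}) restricts $M_1$ and $M_2$ to products of flag minors of the starting standard seed. Iterating along an arbitrary mutation path would then yield a complete characterization of $x$ as a flag minor. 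An important subtlety is that Example~\ref{examplehomog} shows monomial-denominator form already fails for simple modules arising from merely minuscule (non-dominant-minuscule) Weyl group elements; one would need to leverage such obstructions as local witnesses to non-flag-minor status.

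A complementary strategy on the geometric side exploits the MV interpretation via Corollary~\ref{Dbarsmooth}. When the cluster variable $x$ equals $b_Z$ for some stable MV cycle $Z$ of weight $-\nu$, monomial-denominator form of $\overline{D}(x)$ is, by Brion's analysis, closely tied to smoothness of $Z$ at $L_{-\nu}$. A geometric reformulation of the conjecture would then read: the MV cycles corresponding to cluster variables of $\mathbb{C}[N]$ that are smooth at their dominant vertex are exactly those corresponding to flag minors. The hardest single step, regardless of approach, is the implication from monomial-denominator form back to flag-minor status, because Proposition~\ref{propBrion} gives smoothness equivalence with monomial-denominator form only when the subvariety $Y$ coincides with the ambient $X$, so a proper MV cycle with monomial-denominator equivariant multiplicity need not be smooth on the nose; additional input specific to MV cycles categorifying cluster variables will be required to close this gap.
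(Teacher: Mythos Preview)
The statement you are attempting to prove is Conjecture~\ref{strongconj}, and the paper does \emph{not} prove it: it is stated as an open conjecture, supported only by the type~$A_3$ example that follows. There is therefore no ``paper's own proof'' to compare against. Your proposal is not a proof either; it is an outline of two possible strategies (mutation-path analysis and MV-cycle smoothness), and you correctly flag the main gaps in both.

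Beyond incompleteness, the mutation-path strategy has a structural mismatch with the statement. The conjecture concerns individual cluster variables, not seeds: a cluster variable $x$ is a flag minor if and only if it appears in \emph{some} standard seed, and a flag minor will typically also appear in many non-standard seeds. Your plan is to show that the subgraph of the exchange graph whose vertices are seeds consisting \emph{entirely} of monomial-denominator cluster variables coincides with the subgraph of standard seeds. But even if that were true, it would not yield the conjecture: a single cluster variable $x$ with monomial-denominator $\overline{D}(x)$ could sit in a seed whose other variables do not have this form, so your hypothesis on the full seed is never triggered. Conversely, your proposed local step --- that if both $x_k$ and $x'_k$ have monomial-denominator form then the mutation is a standard-seed mutation --- is not obviously true and, even if true, does not propagate along an arbitrary mutation path starting from an arbitrary seed containing $x$. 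The geometric approach via Corollary~\ref{Dbarsmooth} is also only suggestive, as you note: monomial-denominator form of $\epsilon_{L_\mu}^{T^\vee}(Z)$ need not force smoothness of $Z$ at $L_\mu$, and in any case one would still need to identify MV cycles smooth at their extremal point with those indexing flag minors, which is itself an open question raised (but not answered) in the paper.
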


 In other words, flag minors would essentially be characterized by the distinguished form of their equivariant multiplicities. 
 Let us illustrate this by an example. 
 
  \begin{ex}
  Consider $\mathfrak{g}= \mathfrak{sl}_4$. As mentioned in Example~\ref{exampleA3}, the cluster structure of $\mathbb{C}[N]$ contains $14$ seeds. The  simple objects of $R-gmod$ corresponding to the cluster variables are given as follows (with respect to the natural ordering $1<2<3$):
$$ L(1), L(2), L(3), L(12), L(21), L(23), L(32), L(312), L(231), L(123), L(321), L(2312) . $$
 The three last modules correspond to the frozen variables. Among the unfrozen cluster variables, the only one that is not a flag minor (i.e. that does not appear in any standard seed) is $[L(231)]$, and as we saw in Example~\ref{examplehomog} the rational fraction $\overline{D}([L(231)])$ is not of the form (A).
 \end{ex}
    
    \section{Propagation under cluster mutation}
  \label{propag}
    
 This section is devoted to the proof of Theorem~\ref{thmpropag}. 
Let $\mathfrak{g}$ be a simple Lie algebra of finite simply-laced type. We choose a labeling $I$ of the vertices of the Dynkin diagram of $\mathfrak{g}$ and we write the associated Cartan datum as 
$$ i \cdot i = 2 \quad \text{and} \quad i \cdot j = -1 \Leftrightarrow \text{$i$ and $j$ are neighbours in the Dynkin diagram of $\mathfrak{g}$} . $$
 We consider the longest element $w_0$ of the corresponding Weyl group $W$ and we denote by $N$ the length of $w_0$. We fix a reduced expression $\mathbf{i} = (i_1, \ldots , i_N)$ of $w_0$. Let $x_1, \ldots , x_N$ denote the cluster variables and $Q^{\mathbf{i}}$ denote the quiver of the standard seed $\s^{\mathbf{i}}$ of $\mathbb{C}[N]$. We also let $M_1, \ldots , M_N$ denote the determinantal modules corresponding to $\mathbf{i}$ i.e. the simple modules in $R-gmod$ whose isomorphism classes are $x_1, \ldots , x_N$ in $K_0(R-gmod) \simeq \mathbb{C}[N]$. 
   
   The quiver $Q^{\mathbf{i}}$ is defined as follows (see \cite{GLS,KKKO}). First  recall the following piece of notations: for any $1 \leq j \leq N$ we set 
   $$ j_{-}(\mathbf{i}) := \max \left( \{ l, 1 \leq l<j , i_l=i_j \} \sqcup \{ 0 \} \right) $$
   and
$$  j_{+}(\mathbf{i}) := \min \left( \{l, N \geq l>j , i_l=i_j \} \sqcup \{ N+1 \} \right) . $$
   We outline the dependence on $\mathbf{i}$ in order to avoid confusion later in the proof, as we will be considering two different reduced expressions $\mathbf{i}$ and $\mathbf{i}'$. 
    The index set of $Q^{\mathbf{i}}$ is $J = \{1, \ldots , N \}$, which splits into a frozen part $J_{fr}(\mathbf{i}) := \{ u \in J \mid u_{+}(\mathbf{i}) = N+1 \}$ and an unfrozen part  $J_{ex}(\mathbf{i}) := J \setminus J_{fr}(\mathbf{i})$.
 The set of arrows of $Q^{\mathbf{i}}$ is composed of two different kinds of arrows: 
    \begin{itemize}
     \item the \textit{ordinary arrows}: there is such an arrow from the vertex $u$ to the vertex $v$ if and only if 
     $$ i_u \cdot i_v = -1 \quad \text{and} \quad u < v < u_{+}(\mathbf{i}) < v_{+}(\mathbf{i}) . $$
     \item the \textit{horizontal arrows}: for every $u \in J_{ex}(\mathbf{i})$ there is an arrow from the vertex $u_{+}(\mathbf{i})$ to the vertex $u$. 
    \end{itemize}
 For every $j \in J$ we let $\text{in}(j)$ (resp. $\text{out}(j)$) denote the set of all indices $l$ such that there is an arrow from $l$ to $j$ (resp. from $j$ to $l$) in $Q^{\mathbf{i}}$ and  $\text{inord}(j)$ (resp. $\text{outord}(j)$) the set of all indices $l$ such that there is an ordinary arrow from $l$ to $j$ (resp. from $j$ to $l$) in $Q^{\mathbf{i}}$. We have 
 $$ \text{in}(j) = \text{inord}(j) \sqcup \{ j_{+}(\mathbf{i}) \} \quad \text{and} \quad \text{out}(j) = \text{outord}(j) \sqcup \{ j_{-}(\mathbf{i}) \} . $$
  We now consider another reduced expression $\mathbf{i}'$ of $w_0$ such that $\mathbf{i}'$ is obtained from $\mathbf{i}$ by performing a braid relation in $W$ i.e. there is $k \in \{1, \ldots , N \}$ such that $i_k=p, i_{k+1}=q, i_{k+2}=p$ with $p,q \in I$ such that $p \cdot q = -1$. 
  $$ \mathbf{i} = (i_1, \ldots , i_{k-1} , p , q , p , \ldots ) \quad \leadsto \quad  \mathbf{i}' = (i_1, \ldots , i_{k-1} , q , p , q , \ldots ) . $$
  We denote by $\beta'_1, \ldots , \beta'_N$ the positive roots given by the reduced expression $\mathbf{i}'$ and $x'_1, \ldots , x'_N$ the cluster variables of the  standard seed $\s^{\mathbf{i}'}$.
The seeds $\s^{\mathbf{i}}$ and $\s^{\mathbf{i}'}$ are related by a one-step mutation in the direction $k$. It is appropriate for our purpose to be slightly careful with the labeling of the vertices of $Q^{\mathbf{i}}$ and $Q^{\mathbf{i}'}$.

We let $\bs$ denote the transposition $(k+1,k+2)$ of $\{1, \ldots , N \}$ i.e. the permutation that exchanges the indices $k+1$ and $k+2$ and leaves the others fixed. 
The set of vertices of the quiver $Q^{\mathbf{i}'}$ is $\bs(J)$. In other words the vertex labeled $k+1$ in $Q^{\mathbf{i}'}$ is in fact the vertex labeled $k+2$ in $Q^{\mathbf{i}}$ and vice versa. 
 There is an arrow $i \rightarrow j$ in $Q^{\mathbf{i}'}$ if and only if there is an arrow $\bs(i) \rightarrow \bs(j)$ in the quiver obtained from $Q^{\mathbf{i}}$ by the usual mutation process. 
$$
\xymatrix{ Q = Q^{\mathbf{i}} \ar@{~>}[rd]_{\text{mutation}} & {} & Q^{\mathbf{i}'} \\
       {} & Q' \ar@{~>}[ru]_{J \mapsto \bs(J)} & {}   }
 $$
 We begin with a couple of elementary Lemmas as prerequisites for the proof. 
 
 \begin{lem} \label{mutbetaM}
    The positive roots $\beta'_1, \ldots , \beta'_N$ are related to the $\beta_j$ as follows:
   $$ \beta'_k = \beta_{k+2} \quad \beta'_{k+1} = \beta_{k+1} \quad \beta'_{k+2} = \beta_k \quad \text{$\beta'_j= \beta_j$ for any $j \notin \{k,k+1,k+2\}$} . $$
    The flag minors $M'_1 , \ldots , M'_N$ are given by:
    $$ x'_{k+1} = x_{k+2}  \quad x'_{k+2} = x_{k+1} \quad \text{$x'_j = x_j$ for any  $j \notin \{k,k+1,k+2\}$} . $$
    \end{lem}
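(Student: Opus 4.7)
The plan is to verify each identity by direct computation using the definitions of $\beta_j$ and $x_j$ in terms of the reduced expression, together with the braid relation $s_p s_q s_p = s_q s_p s_q$ (which holds because $p \cdot q = -1$) and the observation that $s_i \omega_j = \omega_j$ whenever $i \neq j$. The proof is essentially a bookkeeping exercise; I expect no serious obstacle, but care is needed in keeping track of indices for the three positions $k$, $k+1$, $k+2$.

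First I would handle the positive roots using the formula $\beta_j = s_{i_1} \cdots s_{i_{j-1}}(\alpha_{i_j})$. For $j < k$ or $j > k+2$, the word $s_{i'_1} \cdots s_{i'_{j-1}}$ equals $s_{i_1} \cdots s_{i_{j-1}}$ in $W$ (trivially for $j<k$; for $j>k+2$ by the braid relation applied to the factor $s_p s_q s_p = s_q s_p s_q$), and $i'_j = i_j$, hence $\beta'_j = \beta_j$. For the three indices in the braid, I would compute in the root system: $s_q(\alpha_p) = \alpha_p + \alpha_q = s_p(\alpha_q)$, $s_p s_q(\alpha_p) = \alpha_q$, and $s_q s_p(\alpha_q) = \alpha_p$. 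Setting $u := s_{i_1} \cdots s_{i_{k-1}}$, this yields directly
$$\beta'_k = u(\alpha_q) = u\,s_p s_q(\alpha_p) = \beta_{k+2},$$
$$\beta'_{k+1} = u\, s_q(\alpha_p) = u\,s_p(\alpha_q) = \beta_{k+1},$$
$$\beta'_{k+2} = u\,s_q s_p(\alpha_q) = u(\alpha_p) = \beta_k.$$

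Next I would treat the flag minors $x_j = D(s_{i_1} \cdots s_{i_j}\omega_{i_j}, \omega_{i_j})$. The cases $j < k$ and $j > k+2$ are immediate: outside the braid the letters of $\mathbf{i}$ and $\mathbf{i}'$ agree, and for $j > k+2$ the product $s_{i'_1} \cdots s_{i'_j}$ equals $s_{i_1} \cdots s_{i_j}$ in $W$ by the braid relation. For $j = k+1$, using $i'_{k+1} = p = i_{k+2}$, the braid relation and the fact that $s_q \omega_p = \omega_p$, I would compute
$$s_{i_1}\cdots s_{i_{k-1}} s_p s_q s_p \,\omega_p = u\, s_q s_p s_q\,\omega_p = u\,s_q s_p\,\omega_p,$$
which gives $x_{k+2} = x'_{k+1}$. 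Symmetrically, for $j = k+2$, using $i'_{k+2} = q = i_{k+1}$ and $s_p \omega_q = \omega_q$,
$$u\,s_q s_p s_q\,\omega_q = u\, s_p s_q s_p\,\omega_q = u\, s_p s_q\,\omega_q,$$
so $x'_{k+2} = x_{k+1}$.

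The only subtlety is that $x'_k$ is not claimed to equal any previous $x_j$: it is precisely the new cluster variable produced by mutation at $k$, which is handled separately later via the exchange relation. Thus the lemma reduces to the three elementary root-system identities above and two applications of the braid relation combined with $s_i \omega_j = \omega_j$ for $i \neq j$; both are self-contained computations and no deep input from cluster theory or KLR representation theory is required.
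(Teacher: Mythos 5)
Your proof is correct and follows essentially the same route as the paper: apply the definitions $\beta_j = s_{i_1}\cdots s_{i_{j-1}}(\alpha_{i_j})$ and $x_j = D(s_{i_1}\cdots s_{i_j}\omega_{i_j},\omega_{i_j})$ directly, invoke the braid relation $s_ps_qs_p = s_qs_ps_q$, and use $s_i\omega_j = \omega_j$ for $i\neq j$. The paper proves only the case $\beta'_k$ and the case $x'_{k+1}$ explicitly, leaving the rest as similar; you simply carry out all the cases, including the elementary identities $s_ps_q(\alpha_p)=\alpha_q$, $s_qs_p(\alpha_q)=\alpha_p$, $s_q(\alpha_p)=s_p(\alpha_q)$, so your write-up is more complete but substantively identical.
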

    
     \begin{proof}
   Recall that $\beta_j = s_{i_1} \cdots s_{i_{j-1}} \alpha_{i_j}$ and similarly for the the $\beta'_j$ with $\mathbf{i}'$. Thus 
   $$ \beta'_k = s_{i'_1} \cdots s_{i'_{k-1}} \alpha_{i'_k} = s_{i_1} \cdots s_{i_{k-1}} \alpha_q = s_{i_1} \cdots s_{i_{k-1}}  s_p s_q \alpha_p = \beta_{k+2} . $$
    One checks the other cases in a similar way. 
   
   The flag minor $M_j$ can be written as $D \left( s_{i_1} \cdots s_{i_j} \omega_{i_j} , \omega_{i_j} \right)$ with the notations of Section~\ref{reminddeterm}. Thus one has for instance 
\begin{align*}
 x'_{k+1} &= D \left( s_{i'_1} \cdots s_{i'_{k+1}} \omega_{i'_{k+1}} , \omega_{i'_{k+1}} \right) =  D \left( s_{i_1} \cdots s_{i_{k-1}} s_q s_p \omega_p , \omega_p \right) \\
  &= D \left( s_{i_1} \cdots s_{i_{k-1}} s_p s_q s_p \omega_p , \omega_p \right)  = x_{k+2} . 
 \end{align*}
  The other cases can be checked in the same way. 
     \end{proof}
    
     \smallskip

 \begin{lem} \label{beta}
 One has 
$$ \beta_k + \beta_{k+2} = \beta_{k+1} . $$
 \end{lem}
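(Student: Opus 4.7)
The plan is to prove the identity by direct computation from the formula $\beta_j = s_{i_1} \cdots s_{i_{j-1}} \alpha_{i_j}$ recalled in Section~\ref{fullcom}. All three roots $\beta_k, \beta_{k+1}, \beta_{k+2}$ share the common prefix $s_{i_1} \cdots s_{i_{k-1}}$, which is a linear isometry, so after factoring it out the identity reduces to a statement about the three simple reflections $s_p, s_q$ in a rank-two simply-laced subsystem.

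Concretely, I would first write
\[
\beta_k = s_{i_1}\cdots s_{i_{k-1}}\,\alpha_p,\qquad \beta_{k+1} = s_{i_1}\cdots s_{i_{k-1}}\,s_p\alpha_q,\qquad \beta_{k+2} = s_{i_1}\cdots s_{i_{k-1}}\,s_p s_q\alpha_p.
\]
Using $p\cdot q = -1$ one computes $s_p\alpha_q = \alpha_p + \alpha_q$, and $s_q\alpha_p = \alpha_p + \alpha_q$, and then $s_p s_q \alpha_p = s_p(\alpha_p + \alpha_q) = -\alpha_p + (\alpha_p + \alpha_q) = \alpha_q$. Therefore
\[
\beta_k + \beta_{k+2} = s_{i_1}\cdots s_{i_{k-1}}(\alpha_p + \alpha_q) = s_{i_1}\cdots s_{i_{k-1}}\,s_p\alpha_q = \beta_{k+1},
\]
which is the desired equality.

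There is essentially no obstacle here: the statement is the standard fact that in a braid move $pqp \leftrightsquigarrow qpq$ the middle positive root decomposes as the sum of the two outer ones in the associated inversion sequence. The only thing to be careful about is getting the signs correct when applying $s_p$ to $\alpha_p + \alpha_q$, and the fact that $p \cdot q = -1$ (rather than $0$ or $-2$, which is automatic in the simply-laced setting and from the braid relation hypothesis). No induction or deeper structural result is needed.
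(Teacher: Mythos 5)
Your proof is correct and is essentially the same as the paper's: both factor out the common prefix $s_{i_1}\cdots s_{i_{k-1}}$, reduce to the rank-two computation $\alpha_p + s_ps_q\alpha_p = \alpha_p + \alpha_q = s_p\alpha_q$, and conclude. You merely spell out the intermediate reflection computations ($s_p\alpha_q$, $s_ps_q\alpha_p$) a bit more explicitly than the paper does.
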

 
  \begin{proof}
  This is a straightforward consequence of the definition of the $\beta_j$. Indeed,
  $$ \beta_k + \beta_{k+2} = s_{i_1} \cdots s_{i_{k-1}} \left( \alpha_p + s_p s_q(\alpha_p) \right) =  s_{i_1} \cdots s_{i_{k-1}} \left( \alpha_p + \alpha_q \right) =  s_{i_1} \cdots s_{i_{k-1}} s_p (\alpha_q) $$
  i.e.  $\beta_k + \beta_{k+2}  = \beta_{k+1} . $
  \end{proof}
 
   \smallskip
 
  The structure of the proof is summarized in Figure~\ref{schema}. Let us briefly explain the main points. The starting assumption is that for each $j \in J$, the rational function $\overline{D}(x_j)$ is of the form $1/P_j$ where $P_j$ is a product of positive roots. The aim is to prove that  $\overline{D}(x'_k)$ is also of the form $1/P'_k$ where $P'_k$ is a product of positive roots. It is not hard to convince oneself that this has no chance to hold without any further assumption on the $P_j, 1 \leq j \leq N$. Thus the idea is that the $P_j$ shall satisfy certain relations entirely determined by $\mathbf{i}$ implying the desired form for $\overline{D}(x'_k)$.  Moreover these relations must be preserved under mutation i.e. the polynomials $P_1, \ldots , P_{k-1}, P'_k, P_{k+1}, \ldots , P_N$ have to satisfy the analogous relations determined by $\mathbf{i}'$.
 Therefore we assume that the following properties are satisfied:
  \begin{enumerate}[label=(\Alph*)]
   \item For every $1 \leq j \leq N$, the rational fraction $\overline{D}(x_j)$ is of the form $1/P_j$ where $P_j$ is a product of positive roots. 
   \item For every $1 \leq j \leq N$ one has 
   $$ P_j P_{j_{-}(\mathbf{i})} = \beta_j  \prod_{\substack{ l<j<l_{+}(\mathbf{i}) \\ i_l \cdot i_j =-1}} P_l  . $$
   \item For every $j \in J_{ex}(\mathbf{i})$, one has 
   $$ (\beta_i ; P_j) - (\beta_i ; P_{j_{+}(\mathbf{i})}) \leq 1 $$
    for every $1 \leq i \leq N$, where we set $P_0 := 1$. 
  \end{enumerate}
  We prove that the flag minors of the seed $\s^{\mathbf{i}'}$ satisfy the analogous properties determined by $\mathbf{i}'$ and denoted (A'), (B') and (C'), as shown in Figure~\ref{schema}.
  
   \smallskip
   
     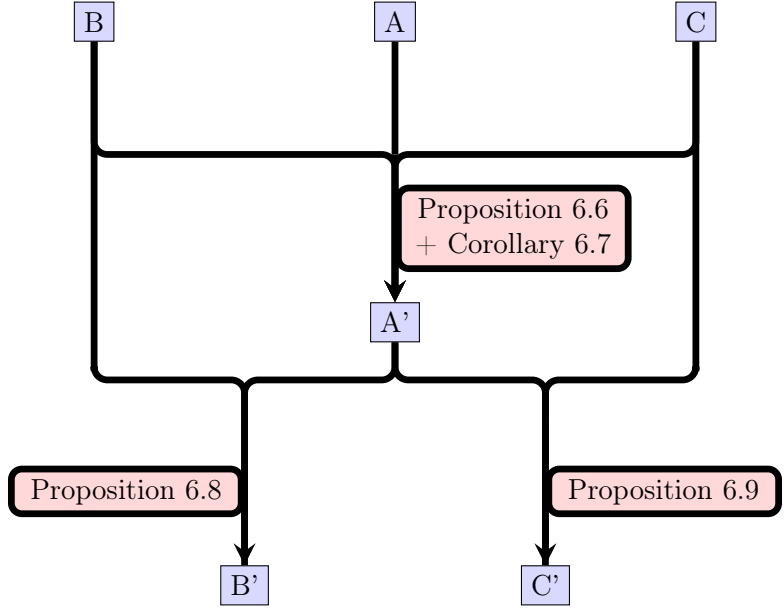
\begin{figure}
  
  \center
  
\begin{tikzpicture}
\tikzstyle{letter}=[above,rectangle,align=center,draw,fill=blue!15]
\tikzstyle{arrow}=[->,>=stealth,rounded corners=5pt,very thick,line width=0.9mm]
\tikzstyle{prop}=[above,rectangle,align=center,draw,text width=2.8cm,fill=red!15,rounded corners]

\node[letter] (A) at (0,4) {A};
\node[letter] (B) at (-4,4) {B};
\node[letter] (C) at (4,4) {C};
\node[letter] (Aprime) at (0,0) {A'};
\node[letter] (Bprime) at (-2,-3.5) {B'};
\node[letter] (Cprime) at (2,-3.5) {C'};

\draw[line width=0.9mm] (A)--(0,2.5);
\draw[arrow] (0,2.5) to node[prop,right]{Proposition~\ref{division} + Corollary~\ref{corPprimek}} (Aprime);
\draw[arrow] (B) |- (-2,2.5) -| (Aprime);
\draw[arrow] (C) |- (2,2.5) -| (Aprime);
\draw[arrow] (B) |- (-4,-0.5) -| (Bprime);
\draw[arrow] (C) |- (4,-0.5) -| (Cprime);
\draw[rounded corners=5pt,very thick,line width=0.9mm] (Aprime) |- (-1,-0.5) -| (-2,-1);
\draw[rounded corners=5pt,very thick,line width=0.9mm] (Aprime) |- (1,-0.5) -| (2,-1);
\draw[very thick,line width=0.9mm] (-2,-1) to node[prop,left]{Proposition~\ref{secondprop}}  (Bprime);
\draw[very thick,line width=0.9mm] (2,-1) to node[prop,right]{Proposition~\ref{mutmult}} (Cprime);
\end{tikzpicture}

 \caption{Structure of the proof of Theorem~\ref{thmpropag}.}
 
  \label{schema}

\end{figure}

  In what follows we will be using the following notations: for every $j \in J$ we set
  $$ \Pij := \prod_{l \in \text{in}(j)} P_l \qquad   P_{\text{inord}(j)} := \prod_{l \in \text{inord}(j)} P_l $$
  $$  \Poj := \prod_{l \in \text{out}(j)} P_l \qquad P_{\text{outord}(j)} := \prod_{l \in \text{outord}(j)} P_l . $$
  Let us begin with a straightforward consequence of Property (B) that will be useful throughout the proof. 

\begin{lem} \label{Dbarhat}
For every $j \in J_{ex}(\mathbf{i})$, one has
$$ \beta_j \Pij = \beta_{j_{+}(\mathbf{i})} \Poj  . $$
\end{lem}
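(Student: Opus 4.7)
The plan is to derive the identity by applying Property (B) at two carefully chosen indices, namely $j$ and $j_{+} := j_{+}(\mathbf{i})$, and then dividing the two resulting equations. Since $j \in J_{ex}(\mathbf{i})$, the index $j_{+}$ lies in $\{1,\ldots,N\}$; moreover, $i_{j_{+}} = i_j$ forces $(j_{+})_{-}(\mathbf{i}) = j$. Writing Property (B) at both of these indices gives
\begin{align*}
P_j P_{j_{-}(\mathbf{i})} &= \beta_j \prod_{l \in S_j} P_l, \\
P_{j_{+}} P_j &= \beta_{j_{+}} \prod_{l \in S_{j_{+}}} P_l,
\end{align*}
where $S_j := \{l : l<j<l_{+}(\mathbf{i}),\ i_l \cdot i_j = -1\}$ and $S_{j_{+}} := \{l : l<j_{+}<l_{+}(\mathbf{i}),\ i_l \cdot i_j = -1\}$ (using $i_{j_{+}} = i_j$).

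The main combinatorial observation is that these two index sets share a common piece. Setting $T := \{l : l<j,\ l_{+}(\mathbf{i})>j_{+},\ i_l \cdot i_j = -1\}$, I would split $S_j$ according to the position of $l_{+}(\mathbf{i})$ relative to $j_{+}$ --- the case $l_{+}(\mathbf{i}) = j_{+}$ being ruled out since $i_l \cdot i_j = -1$ implies $i_l \neq i_j$: the indices with $l_{+}(\mathbf{i}) < j_{+}$ give exactly $\text{inord}(j)$, and the remainder gives exactly $T$. Dually, splitting $S_{j_{+}}$ according to whether $l < j$ or $l > j$ yields $T$ in the first case and, in the second case (where $j < l < j_{+} < l_{+}(\mathbf{i})$ and $i_l \cdot i_j = -1$), exactly $\text{outord}(j)$.

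Substituting these decompositions, the two equations become $P_j P_{j_{-}(\mathbf{i})} = \beta_j P_{\text{inord}(j)} \prod_{l \in T} P_l$ and $P_{j_{+}} P_j = \beta_{j_{+}} P_{\text{outord}(j)} \prod_{l \in T} P_l$. Each $P_l$ is a nonzero element of $\mathbb{C}(\alpha_1,\ldots,\alpha_n)$, being a product of positive roots, so dividing the two identities eliminates $P_j$ together with the common factor $\prod_{l \in T} P_l$, leaving $\beta_j P_{j_{+}} P_{\text{inord}(j)} = \beta_{j_{+}} P_{j_{-}(\mathbf{i})} P_{\text{outord}(j)}$. Since $\Pij = P_{j_{+}} P_{\text{inord}(j)}$ and $\Poj = P_{j_{-}(\mathbf{i})} P_{\text{outord}(j)}$, this is exactly the claim. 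I do not expect any serious obstacle; the only delicate point is the combinatorial bookkeeping identifying the common set $T$ in the two applications of Property (B).
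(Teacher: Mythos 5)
Your proof is correct and follows essentially the same route as the paper: apply Property~(B) at $j$ and at $j_{+}(\mathbf{i})$, divide, and identify the symmetric differences of the two index sets as $\text{inord}(j)$ and $\text{outord}(j)$. The paper carries out the same cancellation but leaves the common set $T$ implicit in the displayed fraction; your explicit split of $S_j$ and $S_{j_{+}}$ along $T$ is just a more pedantic rendering of the identical bookkeeping.
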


\begin{proof}
We fix $j \in J_{ex}(\mathbf{i})$. In particular $j_{+}(\mathbf{i}) \leq N$ so we can combine Property (B) at ranks $j$ and $j_{+}(\mathbf{i})$:
 $$ P_j P_{j_{-}(\mathbf{i})} = \beta_j  \prod_{\substack{ l<j<l_{+}(\mathbf{i}) \\ i_l \cdot i_j =-1}} P_l  \quad \text{and} \quad P_{j_{+}(\mathbf{i})} P_j = \beta_{j_{+}(\mathbf{i})}  \prod_{\substack{ l<j_{+}(\mathbf{i})<l_{+}(\mathbf{i}) \\ i_l \cdot i_j =-1}} P_l  . $$
 Dividing the first one by the second one we get
 $$ \frac{P_{j_{-}(\mathbf{i})}}{P_{j_{+}(\mathbf{i})}} =   \beta_j \prod_{\substack{ l<j<l_{+}(\mathbf{i})<j_{+}(\mathbf{i}) \\ i_l \cdot i_j =-1}} P_l   \left( \beta_{j_{+}(\mathbf{i})} \prod_{\substack{ j<l<j_{+}(\mathbf{i})<l_{+}(\mathbf{i}) \\  i_l \cdot i_j =-1}} P_l \right)^{-1} = \frac{\beta_j P_{\text{inord}(j)}}{\beta_{j_{+}(\mathbf{i})} P_{\text{outord}(j)}} . $$
 By definition one has $\text{in}(j) = \{ j_{+}(\mathbf{i}) \} \sqcup \text{inord}(j)$ and $\text{out}(j) = \{ j_{-}(\mathbf{i}) \} \sqcup \text{outord}(j)$. Thus we have proven the desired statement. 
\end{proof}

\smallskip

\begin{rk} \label{rkBdom}
This statement can be rephrased in a cluster-theoretic way as follows. Recall Fomin-Zelevinsky's notation $\yjh$. They are Laurent monomials in the $x_i$ that can be written in our case as 
$$ \yjh := \prod_{i \in \text{in}(j)} x_i  \prod_{i \in \text{out}(j)} x_i^{-1} $$
 for every $j \in J_{ex}(\mathbf{i})$. The algebra morphism $\overline{D}$ can be extended to the fraction field of $\mathbb{C}[N]$ so that the evaluation $\overline{D}(\yjh)$ makes sense. Lemma~\ref{Dbarhat} can then be restated as 
$$ \overline{D}(\yjh) = \frac{\beta_j}{\beta_{j_{+}(\mathbf{i})}} $$
for every $j \in J_{ex}(\mathbf{i})$. 
 \end{rk}
 
  \bigskip
 
  We let $\Pjt$ denote the polynomial given by Lemma~\ref{Dbarhat} i.e.
   $$ \Pjt :=  \beta_j \Pij = \beta_{j_{+}(\mathbf{i})} \Poj $$
    for every $j \in J_{ex}(\mathbf{i})$. 
 In the sequel of the proof, we denote by $(\beta ; P)$ the multiplicity of $\beta$ in $P$ for every $\beta \in \Phi_{+}$ and $P \in \{P_1, \ldots , P_N\}$, i.e. the largest positive integer $d$ such that $\beta^d \mid P$.
 Let us state another useful consequence of Property (B). The proof is straightforward by induction on $j$, and does not make use of Property (C). 
 
 \begin{lem} \label{cormult}
 For every $i,j \in J_{ex}(\mathbf{i})$, one has
$$ i>j \Rightarrow (\beta_i ; P_j) = 0   \qquad \text{and} \qquad (\beta_i ; P_i) = 1 . $$
 \end{lem}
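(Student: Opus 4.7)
The plan is to argue by induction on $j \in \{1, \ldots, N\}$, establishing both claims simultaneously. The key tool is Property (B), which after isolating $P_j$ expresses it as a combination of $\beta_j$ and of polynomials $P_l$ for various indices strictly smaller than $j$ (together with $P_{j_{-}(\mathbf{i})}$, also indexed by $j_{-}(\mathbf{i}) < j$). Since every index involved is strictly smaller than $j$, the induction hypothesis will apply directly.

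For the base case $j = 1$, one has $1_{-}(\mathbf{i}) = 0$, the product in Property (B) is empty, and by the convention $P_0 := 1$ introduced in the statement of Theorem~\ref{thmpropag}, relation (B) specializes to $P_1 = \beta_1$. Since the positive roots $\beta_1, \ldots, \beta_N$ associated to the reduced expression $\mathbf{i}$ are pairwise distinct, this yields $(\beta_i ; P_1) = \delta_{i,1}$, which gives both desired statements at rank $1$. For the inductive step, fix $j \geq 2$ and assume both claims hold at all ranks strictly less than $j$. Reading off the multiplicity $(\beta_i ; \cdot)$ of both sides of the identity in Property (B) gives
$$ (\beta_i ; P_j) = \delta_{i,j} + \sum_{\substack{l<j<l_{+}(\mathbf{i}) \\ i_l \cdot i_j = -1}} (\beta_i ; P_l) - (\beta_i ; P_{j_{-}(\mathbf{i})}). $$
If $i > j$, the Kronecker symbol vanishes; every $l$ in the sum satisfies $l < j < i$, so $(\beta_i ; P_l) = 0$ by induction, and similarly $j_{-}(\mathbf{i}) < j < i$ gives $(\beta_i ; P_{j_{-}(\mathbf{i})}) = 0$. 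Hence $(\beta_i ; P_j) = 0$. Taking $i = j$ instead, the same induction eliminates all other terms and only $\delta_{j,j} = 1$ remains, whence $(\beta_j ; P_j) = 1$.

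There is no genuine obstacle in this argument: it is purely formal and, as the statement indicates, uses neither Property (A) nor Property (C), only the multiplicative structure of Property (B). The one subtlety worth mentioning is verifying that the indices $l$ and $j_{-}(\mathbf{i})$ appearing in (B) are strictly less than $j$, which is immediate from their definitions, so that the induction hypothesis is legitimately available when reading off the multiplicities.
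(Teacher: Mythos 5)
Your proof is correct and follows essentially the same inductive strategy as the paper: induct on $j$, read off multiplicities from the identity in Property~(B), and close via the induction hypothesis since every index $l$ and $j_-(\mathbf{i})$ occurring in that identity is strictly less than $j$. The one place where you diverge is the base case $j=1$: the paper identifies the determinantal module $M_1$ with the cuspidal module $L(i_1)$, notes that it is one-dimensional, and computes $\overline{D}(M_1)=1/\alpha_{i_1}=1/\beta_1$ directly from the definition of $\overline{D}$; you instead observe that, with the convention $P_0=1$, Property~(B) at $j=1$ already forces $P_1=\beta_1$ without any representation-theoretic input. Your version is slightly cleaner and makes the lemma a purely formal consequence of Property~(B), which is in the spirit of how the paper advertises it. One small imprecision: you claim the argument uses neither Property~(A) nor Property~(C), but Property~(A) is implicitly required just to know that each $P_j$ is a well-defined product of positive roots so that the multiplicities $(\beta_i; P_j)$ make sense; what is genuinely unused is Property~(C), as the paper itself remarks.
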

 
 \begin{proof}
 The proof relies on an induction on $j$. Consider the case $j=1$ and let $h$ denote the letter $h=i_1$.  First note that $\beta_1 = \alpha_h$. The determinantal module $M_1$ is the cuspidal module $L(h)$: it is one-dimensional as a $\mathbb{C}$-vector space and its weight-space decomposition is simply 
 $$ M_1 = \mathbb{C} \mathbf{v}_h \quad \text{with $e(h) \cdot \mathbf{v}_h = \mathbf{v}_h$} . $$
  Thus Equation~\eqref{Dbar} yields
 $$ \overline{D}(M_1) = \frac{1}{\alpha_h} = \frac{1}{\beta_1} . $$
 Equivalently $P_1 = \beta_1$ and the desired statement holds for $j=1$.
 
  Fix $j \in  \{1, \ldots , N \}$ and assume the statement is true for every $j' < j$. If $i>j$ then the induction hypothesis implies 
  $$ (\beta_i;P_{j_{-}(\mathbf{i})}) = 0 \quad \text{and} \quad \text{$(\beta_i;P_l) = 0$ for every $l<j<l_{+}(\mathbf{i})$} . $$
  Hence Property (B) implies $(\beta_i;P_j)=0$. 
  If $i=j$ then we use the induction hypothesis in the same way but the presence of $\beta_j$ in the right hand side of Property (B) yields $(\beta_j ; P_j) = 1$. 
 \end{proof}
 
 \smallskip
 
 The following statement is the heart of the proof. The idea is to combine Property (B) together with Lemma~\ref{Dbarhat} at different indices and then to use Property (C). This last property plays a crucial role and is a priori not redundant with Properties (A) and (B). 
 
  \begin{prop} \label{division}
Let $l \in \{1, \ldots , N\}$ and assume there exists $m \in \{1, \ldots , N\}$ such that
 $$ i_l \cdot i_m = -1 \qquad \text{and} \qquad m_{-}(\mathbf{i})<l<m<l_{+}(\mathbf{i})<m_{+}(\mathbf{i}) . $$
  Then $P_l \mid \Plt$. 
 \end{prop}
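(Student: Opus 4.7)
The plan is to reformulate the divisibility as a multiplicity inequality, then verify it using the hypothesis on $m$ together with Property~(C).

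I would first apply Property~(B) at index $l$ and split the right-hand side according to whether $k_{+}(\mathbf{i}) < l_{+}(\mathbf{i})$ (giving exactly $\text{inord}(l)$) or $k_{+}(\mathbf{i}) > l_{+}(\mathbf{i})$ (giving the ``non-arrow'' set $B := \{k < l : i_k \cdot i_l = -1,\ k_{+}(\mathbf{i}) > l_{+}(\mathbf{i})\}$). This yields $\beta_l P_{\text{inord}(l)} = P_l P_{l_{-}(\mathbf{i})}/P_B$, and substituting into the definition $\tilde{P}_l = \beta_l P_{\text{inord}(l)}\, P_{l_{+}(\mathbf{i})}$ produces the identity
\[
\tilde{P}_l \,=\, \frac{P_l\, P_{l_{-}(\mathbf{i})}\, P_{l_{+}(\mathbf{i})}}{P_B}.
\]
Hence $P_l \mid \tilde{P}_l$ is equivalent to the divisibility $P_B \mid P_{l_{-}(\mathbf{i})} P_{l_{+}(\mathbf{i})}$, that is, to the inequality
\[
\sum_{k \in B} (\beta; P_k) \,\leq\, (\beta; P_{l_{-}(\mathbf{i})}) + (\beta; P_{l_{+}(\mathbf{i})})
\]
for every positive root $\beta$.

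I would then use the hypothesis to control $B$. Setting $q := i_m$ and $j := m_{-}(\mathbf{i})$, the conditions $j < l$ and $j_{+}(\mathbf{i}) = m < l_{+}(\mathbf{i})$ show that $j \in \text{inord}(l) \setminus B$, and moreover that $j$ is the latest occurrence of $q$ strictly before $l$ (since its successor is $m > l$). Any earlier occurrence $k < j$ of $q$ then satisfies $k_{+}(\mathbf{i}) \leq j < l_{+}(\mathbf{i})$, so also lies outside $B$. Therefore no index of $B$ carries the letter $q$. Combined with the elementary observation that $B$ contains at most one index per neighbor letter of $i_l$ (namely the largest position of that letter less than $l$, whenever its successor exceeds $l_{+}(\mathbf{i})$), this gives a strong structural constraint on $B$, which in particular already closes the argument in type $A_n$ since the $P_j$ are square-free there and $i_l$ then has at most one remaining neighbor besides $q$.

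Finally I would establish the multiplicity inequality by an iterated application of Property~(C). For each $k \in B$ (automatically in $J_{ex}$, since $k_{+}(\mathbf{i}) \leq N$), Property~(C) gives $(\beta; P_k) \leq 1 + (\beta; P_{k_{+}(\mathbf{i})})$; iterating this bound along the chain of successive occurrences of $i_k$ and simultaneously applying Property~(B) together with Lemma~\ref{Dbarhat} at the positions $l_{+}(\mathbf{i})^{(r)}$ (which recover the $P_B$-like factors through the horizontal arrows) allows one to telescope the total sum $\sum_{k \in B}(\beta; P_k)$ into a bound of the form $(\beta; P_{l_{-}(\mathbf{i})}) + (\beta; P_{l_{+}(\mathbf{i})})$. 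The hard part will be this telescoping step: Property~(C) only yields one-step bounds, so stitching them along an arbitrary portion of $\mathbf{i}$ while respecting the fact that the letter $q$ is reserved by $j \in \text{inord}(l)$ and $m \in \text{outord}(l)$ requires a careful combinatorial case analysis, crucially using the bound on node degrees in a simply-laced Dynkin diagram.
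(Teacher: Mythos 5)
Your reformulation $\tilde{P}_l = P_l\,P_{l_{-}(\mathbf{i})}\,P_{l_{+}(\mathbf{i})}/P_B$, obtained by splitting the product in Property~(B) at $l$ into $\text{inord}(l)$ and the complementary set $B$, is algebraically correct, and the observation that $m_{-}(\mathbf{i})$ must lie in $\text{inord}(l)\setminus B$ (being the last occurrence of $i_m$ before $l$, with successor $m<l_{+}(\mathbf{i})$) is exactly the structural fact that drives the proof. However, the argument stops at precisely the point where a proof is needed: you acknowledge that the "telescoping" of one-step bounds from Property~(C) along chains of successive occurrences is "the hard part" requiring "a careful combinatorial case analysis," and you do not carry it out. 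As written, this is a plan, not a proof, and I do not see how to make the telescoping converge to $(\beta;P_{l_-(\mathbf{i})})+(\beta;P_{l_+(\mathbf{i})})$ in general.

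The paper's route is considerably more direct and avoids controlling $B$ entirely. Fixing $\beta_i$ with $i<l$ (the cases $i\ge l$ being immediate from the lemma that $(\beta_i;P_j)=0$ for $i>j$ and $(\beta_i;P_i)=1$), one may assume $(\beta_i;P_l)>(\beta_i;P_{l_+(\mathbf{i})})$, and the goal reduces to showing $(\beta_i;P_{\text{inord}(l)})\ge 1$. This is done by contradiction: supposing $(\beta_i;P_{\text{inord}(l)})=0$, applying Lemma~\ref{Dbarhat} at $j=l$ and Property~(B) at $j=m$ (which brings in $P_l$ on the right because $l$ is an inord-type index for $m$) yields, after cancellation, $(\beta_i;P_{m_-(\mathbf{i})}) \ge (\beta_i;P_l)-(\beta_i;P_{l_+(\mathbf{i})}) > 0$; since $m_-(\mathbf{i})$ either equals $0$ or lies in $\text{inord}(l)$, this is a contradiction. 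A \emph{single} application of Property~(C) at $j=l$ then gives $(\beta_i;P_l)-(\beta_i;P_{l_+(\mathbf{i})})\le 1 \le (\beta_i;P_{\text{inord}(l)})$, finishing the argument. You should revisit your draft with this in mind: rather than trying to bound $(\beta;P_B)$ by iterating~(C) over the whole quiver, observe that the hypothesis on $m$ is designed to produce one guaranteed contribution to $P_{\text{inord}(l)}$, and~(C) is only needed once.
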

 
 The assumptions relating $l$ and $m$ mean that there exists an ordinary arrow from the vertex $l$ to the vertex $m$, as well as from the vertex $m_{-}(\mathbf{i})$ to the vertex $l$ (if $m_{-}(\mathbf{i})>0$) in the quiver $Q^{\mathbf{i}}$.
 
  \begin{proof}
  Proving $P_l \mid \Plt$ is equivalent to proving that  $(\beta_i ; P_l) \leq (\beta_i ; \Plt)$ for every $1 \leq i \leq N$. If $i>l$ then $(\beta_i ; P_l) = 0$ by Lemma~\ref{cormult} and thus there is nothing to prove in this case. Assume $i \leq l$.
  
   We deal with the case $i=l$ separately. By Lemma~\ref{cormult}, one has $(\beta_l ; P_l) = 1$. Hence we have 
   $$ (\beta_l ; P_l) = 1 = (\beta_l ; \beta_l) \leq (\beta_l ; \Plt) $$ 
  which is the desired inequality.
  
 From now on we assume $i<l$. If $(\beta_i ; P_l) \leq (\beta_i ; P_{l_{+}(\mathbf{i})})$ then $(\beta_i ; P_l) \leq (\beta_i ; \Plt)$ as $P_{l_{+}(\mathbf{i})} \mid \Plt$ by definition of $\Plt$. Hence we can assume $(\beta_i ; P_l) > (\beta_i ; P_{l_{+}(\mathbf{i})})$. 
  
   Assume $(\beta_i ; P_{\text{inord}(l)}) = 0$. Then applying Lemma~\ref{Dbarhat} at $j=l$ we can write 
  $$ (\beta_i ; P_{l_{+}(\mathbf{i})}) = (\beta_i ; P_{l_{-}(\mathbf{i})}) + (\beta_i ; P_{\text{outord}(l)}) 
    = (\beta_i ; P_{l_{-}(\mathbf{i})}) + (\beta_i ; P_m) + (\beta_i ; R) $$
 where $R$ is the product of the polynomials attached to the other tails of ordinary arrows coming out of $l$ in $Q^{\mathbf{i}}$. Now we use Property (B) at $j=m$: as $i<l$, in particular $i<m$ and hence we get
$$ (\beta_i ; P_{m}) = -(\beta_i ; P_{m_{-}(\mathbf{i})}) + \sum_{\substack{h<m<h_{+}(\mathbf{i}) \\ i_h \cdot i_m = -1}} (\beta_i ; P_h). $$
 By assumption we have $i_l \cdot i_m = -1$ and $l<m<l_{+}(\mathbf{i})$. Hence the previous equation can be written as 
 $$ (\beta_i ; P_{m}) = -(\beta_i ; P_{m_{-}(\mathbf{i})}) + (\beta_i ; P_l) + (\beta_i ; Q) $$ 
 where $Q$ is the product of the $P_h , h \neq l , i_h \cdot i_m = -1 , h<m<h_{+}(\mathbf{i})$. Thus we get
 $$ (\beta_i ; P_{l_{+}(\mathbf{i})}) = (\beta_i ; P_{l_{-}(\mathbf{i})}) -(\beta_i ; P_{m_{-}(\mathbf{i})}) + (\beta_i ; P_l) + (\beta_i ; Q) + (\beta_i ; R) $$
 which can be rewritten as 
  $$ (\beta_i ; P_{m_{-}(\mathbf{i})})  = (\beta_i ; P_{l_{-}(\mathbf{i})})  + (\beta_i ; P_l) - (\beta_i ; P_{l_{+}(\mathbf{i})})   + (\beta_i ; Q) + (\beta_i ; R) . $$
  Since we assumed $(\beta_i ; P_l) > (\beta_i ; P_{l_{+}(\mathbf{i})})$, this implies in particular 
$$ (\beta_i ; P_{m_{-}(\mathbf{i})})  > (\beta_i ; P_{l_{-}(\mathbf{i})})  + (\beta_i ; Q) + (\beta_i ; R) \geq 0 . $$
If $m_{-}(\mathbf{i})=0$ then $P_{m_{-}(\mathbf{i})}=1$ so this is a contradiction. If $m_{-}(\mathbf{i})>0$, then there is an ordinary arrow from the vertex $m_{-}(\mathbf{i})$ to the vertex $l$, and hence one has in particular $(\beta_i ; P_{\text{inord}(l)}) > 0$ which is again a contradiction. 

 Thus we have proven that $(\beta_i ; P_{\text{inord}(l)}) > 0$. This is where Property (C) is crucially involved. Indeed, Property (C) with the label $l$ yields $(\beta_i ; P_l) - (\beta_i ; P_{l_{+}(\mathbf{i})}) \leq 1$. Therefore, we have 
 $$ (\beta_i ; P_{\text{inord}(l)}) \geq 1 \geq (\beta_i ; P_l) - (\beta_i ; P_{l_{+}(\mathbf{i})}) . $$
As $\text{inord}(l) \sqcup \{l_{+}\} = \text{in}(l)$, this implies $(\beta_i ; P_l) \leq (\beta_i ; P_{\text{in}(l)})$ which finishes the proof. 
  \end{proof}
  
  \smallskip
  
   As a straightforward application, we can now prove that Property (A) holds for the seed $\s^{\mathbf{i}'}$. 
 
  \begin{cor} \label{corPprimek}
   One has 
  \begin{equation} \label{Aprime} \tag{A'}
   \overline{D}(x'_k) = \frac{1}{P'_k} \quad \text{with $P'_k$ product of positive roots given by $P'_k = \frac{\Pkt}{\beta_{k+1}P_k}$.} 
     \end{equation}
  \end{cor}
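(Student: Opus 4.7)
The plan is to apply the cluster exchange relation at vertex $k$, push it through the algebra morphism $\overline{D}$, and then use Properties (A), (B), (C) via Lemmas~\ref{Dbarhat}, \ref{beta}, \ref{cormult} and Proposition~\ref{division} to both compute the value of $\overline{D}(x'_k)$ and check that the denominator is genuinely a product of positive roots.

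First I would write the exchange relation
$$ x_k x'_k = \prod_{i \in \text{in}(k)} x_i + \prod_{i \in \text{out}(k)} x_i $$
and apply $\overline{D}$, which by Property (A) gives $\overline{D}(x'_k) = P_k \bigl( 1/\Pik + 1/\Pok \bigr)$. Since $k \in J_{ex}(\mathbf{i})$ (because $k_+(\mathbf{i}) = k+2 \leq N$), Lemma~\ref{Dbarhat} provides $\Pkt = \beta_k \Pik = \beta_{k+2} \Pok$. Substituting these, combining the fractions over the common denominator $\beta_k\beta_{k+2}$, and invoking Lemma~\ref{beta} to rewrite $\beta_k + \beta_{k+2} = \beta_{k+1}$, one obtains after simplification
$$ \overline{D}(x'_k) = \frac{\beta_{k+1} P_k}{\Pkt}, $$
which is the claimed formula.

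The main difficulty is to verify that $\beta_{k+1}P_k$ divides $\Pkt$ so that $P'_k$ is indeed a product of positive roots. Equivalently, one must check $(\beta_i; P_k) + (\beta_i; \beta_{k+1}) \leq (\beta_i; \Pkt)$ for every positive root $\beta_i$. For $i \neq k+1$, this reduces to the multiplicity inequality $(\beta_i;P_k)\leq (\beta_i;\Pkt)$, which is exactly the content of Proposition~\ref{division} applied with $l = k$ and $m = k+1$: its hypotheses $(k+1)_-(\mathbf{i}) < k < k+1 < k_+(\mathbf{i}) = k+2 < (k+1)_+(\mathbf{i})$ are immediate from $i_k = p$, $i_{k+1} = q$, $i_{k+2} = p$, together with $p\cdot q = -1$.

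The remaining case $i = k+1$ is the \emph{crux} of the argument and is not directly covered by Proposition~\ref{division}: one needs a strict gain of $+1$ in multiplicity. By Lemma~\ref{cormult} we have $(\beta_{k+1}; P_k) = 0$, so it suffices to show $\beta_{k+1} \mid \Pkt$. Writing $\Pkt = \beta_k\Pik$ and noting $\beta_{k+1} \neq \beta_k$, this reduces to $\beta_{k+1}\mid\Pik$, and since $k_+(\mathbf{i}) = k+2 \in \text{in}(k)$, it is enough to prove $\beta_{k+1} \mid P_{k+2}$. For this I would invoke Property (B) at the index $k+2$: noting $(k+2)_-(\mathbf{i}) = k$, it reads $P_{k+2}P_k = \beta_{k+2}\prod_l P_l$, the product ranging over $l$ with $l < k+2 < l_+(\mathbf{i})$ and $i_l\cdot i_{k+2} = -1$. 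The index $l = k+1$ belongs to this set, so $P_{k+1}$ appears on the right-hand side; since $(\beta_{k+1}; P_{k+1}) = 1$ by Lemma~\ref{cormult} and $(\beta_{k+1}; P_k) = 0$, we conclude $\beta_{k+1} \mid P_{k+2}$, completing the argument.
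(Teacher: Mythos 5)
Your proof is correct and follows essentially the same strategy as the paper: apply $\overline{D}$ to the exchange relation, simplify with Lemma~\ref{Dbarhat} and Lemma~\ref{beta} to get $\overline{D}(x'_k) = \beta_{k+1}P_k/\Pkt$, establish $P_k \mid \Pkt$ via Proposition~\ref{division} with $(l,m)=(k,k+1)$, and then handle the factor $\beta_{k+1}$ separately using Lemma~\ref{cormult}. The only minor deviation is in showing $\beta_{k+1}\mid\Pik$: you locate $\beta_{k+1}$ inside $P_{k+2}=P_{k_+(\mathbf{i})}$ by invoking Property (B) at the index $k+2$, whereas the paper instead observes $k+1\in\text{out}(k)$, gets $\beta_{k+1}\mid\Pok$ from $\beta_{k+1}\mid P_{k+1}$, and transfers to $\Pik$ via the equality $(\beta_{k+1};\Pik)=(\beta_{k+1};\Pok)$ from Lemma~\ref{Dbarhat}. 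These two routes are interchangeable (Lemma~\ref{Dbarhat} is itself a consequence of Property (B) at adjacent indices), so the argument is substantively the same.
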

  
   \begin{proof}
   The exchange relation at the vertex $k$ can be written as 
   $$ x_k x'_k = \prod_{j \in \text{in}(k)} x_j + \prod_{j \in \text{out}(k)} x_j . $$
   Applying the algebra morphism $\overline{D}$ we get 
  \begin{align*}
   \frac{\overline{D}(x'_k)}{P_k} &= \frac{1}{\Pik} \left( 1 + \frac{\Pik}{\Pok} \right) 
   = \frac{1}{\Pik} \left( 1 +  \frac{\beta_{k+2}}{\beta_k}  \right)  \text{by Lemma~\ref{Dbarhat} } \\
   &= \frac{1}{\Pik} \frac{\beta_{k+1}}{\beta_k} \text{by Lemma~\ref{beta}}  . 
   \end{align*}
   Thus we get 
   $$ \overline{D}(x'_k) = \frac{\beta_{k+1}P_k}{\Pkt} . $$
Recall from the beginning of this section that we have $i_k=p=i_{k+2}$ and $i_{k+1}=q$ with $p \cdot q = -1$. Hence we can apply Proposition~\ref{division} with $l=k$ and $m=k+1$, which yields $P_k \mid \Pkt$. Lemma~\ref{cormult} implies $(\beta_{k+1} ; P_k) = 0$ and $(\beta_{k+1} ; P_{k+1}) = 1$. Applying Lemma~\ref{Dbarhat} we get 
   $$ (\beta_{k+1} ; P_\text{in}(k)) = (\beta_{k+1} ; P_\text{out}(k)) \geq (\beta_{k+1} ; P_{k+1}) > 0 . $$
   Hence $\beta_{k+1} \mid P_{\text{in}(k)}$. Finally we have $\beta_{k+1} P_k \mid \Pkt$ which proves the Corollary. 
   \end{proof}
 
  For $j \neq k$, we have $x'_j=x_{\bs(j)}$ and hence Property (A) holds with $P'_j := P_{\bs(j)}$.
 Now we prove that Property (B) propagates under mutation.

 \begin{prop} \label{secondprop}
  Property (B) holds for the seed $\s^{\mathbf{i}'}$ i.e. one has
\begin{equation}  \label{secondhyp} \tag{B'}
P'_jP'_{j_{-}(\mathbf{i}')} = \beta'_j  \prod_{\substack{ l<j<l_{+}(\mathbf{i}') \\ i'_l \cdot i'_j =-1}} P'_l
  \end{equation}
 for every $j \in J$. 
 \end{prop}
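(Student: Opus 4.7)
The plan is to verify Property (B') at each $j \in J$ by cases, distinguishing the two \emph{far} cases $j < k$ and $j > k+2$ from the three \emph{local} cases $j \in \{k, k+1, k+2\}$. The inputs are Property (B) at the three adjacent indices $k, k+1, k+2$ of $\mathbf{i}$, the two forms $\Pkt = \beta_k \Pik = \beta_{k+2} \Pok$ provided by Lemma~\ref{Dbarhat}, and the explicit expression $P'_k = \Pkt/(\beta_{k+1} P_k)$ from Corollary~\ref{corPprimek}.

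For $j < k$, the reduced expressions $\mathbf{i}$ and $\mathbf{i}'$ agree on positions $< k$, so $\beta'_j = \beta_j$, $j_{-}(\mathbf{i}') = j_{-}(\mathbf{i})$, and $P'_l = P_l$ for all $l \leq j$. A small check ensures that the condition $l_{+}(\mathbf{i}') > j$ in (B') reduces to $l_{+}(\mathbf{i}) > j$ (the possible swap between the values $l_{+} = k$ and $l_{+} = k+1$ is immaterial since $j < k$), so (B') at $j$ is identical to (B) at $j$. The case $j > k+2$ is analogous: the key subtlety is that the indices $l = k+1$ and $l = k+2$ in the product swap roles between (B) and (B') due to the exchange of the letters $p,q$ at positions $k+1, k+2$. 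One verifies the matching via the identities $(k+1)_{+}(\mathbf{i}') = (k+2)_{+}(\mathbf{i})$, $(k+2)_{+}(\mathbf{i}') = (k+1)_{+}(\mathbf{i})$, together with $P'_{k+1} = P_{k+2}$, $P'_{k+2} = P_{k+1}$; the possible shift of $j_{-}(\mathbf{i})$ inside $\{k+1, k+2\}$ is handled by the same substitution.

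The three local cases are computed explicitly. For $j = k+1$, one has $(k+1)_{-}(\mathbf{i}') = k_{-}(\mathbf{i})$; substituting $P'_k = \Pkt/(\beta_{k+1} P_k)$ and $\Pkt = \beta_k \Pik$ on the right-hand side, the identity reduces, after a careful split of the summation in (B) at $k$ between the index $(k+1)_{-}(\mathbf{i})$ (whose letter is $q$) and the remaining neighbors of $p$, to (B) at $k$ in $\mathbf{i}$. A symmetric manipulation using $\Pkt = \beta_{k+2} \Pok$ gives (B') at $k+2$ from (B) at $k+1$, via $(k+2)_{-}(\mathbf{i}') = k$ and $P'_{k+2} = P_{k+1}$. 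Finally for $j = k$, the previous $q$ in $\mathbf{i}'$ before position $k$ is $(k+1)_{-}(\mathbf{i})$, so the left-hand side of (B') at $k$ becomes $\Pkt \cdot P_{(k+1)_{-}(\mathbf{i})}/(\beta_{k+1} P_k)$; applying (B) at $k+1$ to rewrite $P_{k+1} P_{(k+1)_{-}(\mathbf{i})}$ and using $\Pkt = \beta_{k+2} \Pok$ collapses this to the expected right-hand side.

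The main obstacle is the combinatorial bookkeeping around positions $k, k+1, k+2$: for each case one must identify precisely which $l < j$ satisfy the conditions $l_{+}(\mathbf{i}') > j$ and $i'_l \cdot i'_j = -1$, and match the resulting product with the right-hand side of (B) at the appropriate original index. The role of the quiver combinatorics of $Q^{\mathbf{i}}$ is essential here: one uses in particular that the only ordinary arrow into $k$ in $Q^{\mathbf{i}}$ comes from $(k+1)_{-}(\mathbf{i})$, and the only ordinary arrow out of $k$ goes to $k+1$. Once this bookkeeping is in place, the remaining algebra is a routine combination of (B) at the three neighboring indices together with the two expressions of $\Pkt$.
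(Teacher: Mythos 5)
Your proposal is correct and follows essentially the same route as the paper's proof: you reduce to the local cases $j \in \{k, k+1, k+2\}$, and in each of those cases you use Property (B) at exactly the same index the paper does ((B) at $k+1$ for $j=k$, (B) at $k$ for $j=k+1$, (B) at $k+1$ again for $j=k+2$), together with Lemma~\ref{Dbarhat} and the formula for $P'_k$ from Corollary~\ref{corPprimek}. The paper packages the combinatorial bookkeeping as set identities like $I(k+1)\sqcup(J\cap\{r\}) = I'(k)\sqcup\{k\}$ rather than as a statement about the unique ordinary arrow into and out of vertex $k$, but the content is the same; you also spell out the far cases $j<k$ and $j>k+2$ in more detail where the paper dispatches them in one sentence.
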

 
 \begin{proof}
 Throughout this proof, we set for every $j \in J$:
 $$ I(j) := \{ l \in J \mid  i_l \cdot i_j =-1 , l<j<l_{+}(\mathbf{i}) \} ,  \qquad I'(j) := \{ l \in J \mid  i'_l \cdot i'_j =-1 , l<j<l_{+}(\mathbf{i}') \} . $$
 First note that it is sufficient to prove only the desired statement when $j \in \{k, k+1, k+2\}$. Indeed, for other values of $j$, we have $P'_j=P_j$ and $\beta'_j=\beta_j$ by Lemma~\ref{mutbetaM}. As $j \neq k+2$ we have $j_{-}(\mathbf{i}') \neq k$ and hence Lemma~\ref{mutbetaM} implies that $P'_{j_{-}(\mathbf{i}')}=P_{j_{-}(\mathbf{i})}$. Similarly, as $j \neq k+1$, one has $k \notin I'(j)$ and hence $\prod_{l \in I'(j)} P'_l=\prod_{l \in I(j)} P_l$. So if $j \notin \{k, k+1, k+2\}$ then the relations $(B)$ at the index $j$ are exactly the same for $\mathbf{i}$ and $\mathbf{i}'$.

 Consider the case $j=k$. We let $r$ (resp. $s$) denote the position of the last occurrence of the letter $p$ (resp. $q$) strictly before the position $k$. In other words $r=k_{-}(\mathbf{i})=(k+1)_{-}(\mathbf{i}')$ and $s=(k+1)_{-}(\mathbf{i})=k_{-}(\mathbf{i}')$. It is straightforward to check that 
 $$ I(k+1) \sqcup \left( J \cap \{ r \} \right)  = I'(k) \sqcup \{ k \} $$
 (recall that $J := \{1, \ldots, N \}$).
 Indeed if $l$ is such that $i_l \neq p$ then $l \in I'(k)$ if and only if $l \in I(k+1)$. If $i_l=p$, then $l \in I'(k)$ if and only if $r \neq 0$ and $l=r$; on the other hand $l \in I(k+1)$ if and only if $l=k$.
 
  As all the indices in $I'(k)$ are strictly smaller than $k$, one has $P'_l=P_l$ for every $l \in I'(k)$. Thus we have
$$ \prod_{l \in I(k+1)} P_l = \frac{P_k}{P_r} \prod_{l \in I'(k)} P_l =  \frac{P_k}{P_r} \prod_{l \in I'(k)} P'_l  $$
(recall that $P_r=1$ if $r=0$). 
Now we can write 
 $$ P'_kP'_{k_{-}(\mathbf{i}')} = P'_k P_{s} = P_s \frac{\beta_{k+2}P_{k+1}P_{r}}{\beta_{k+1}P_k} \quad \text{by Equation~\eqref{Aprime}}  . $$
 Using Property (B) at the index $k+1$ we get
 $$ P_{k+1} P_s = \beta_{k+1} \prod_{l \in I(k+1)} P_l $$
 and thus 
$$ P'_kP'_{k_{-}(\mathbf{i}')} = \beta_{k+2} \frac{P_r}{P_k}  \prod_{l \in I(k+1)} P_l  = \beta_{k+2} \prod_{l \in I'(k)} P'_l = \beta'_k \prod_{l \in I'(k)} P'_l  $$
 using Lemma~\ref{mutbetaM}. This is the desired equality. 

  Now consider the case $j=k+1$. Similarly we have 
  $$ I'(k+1) \sqcup \left( J \cap \{ s \} \right)  = I(k) \sqcup \{ k \} . $$
 By Property (B) at the index $k$, we can write
  $$ P'_{k+1}P'_{(k+1)_{-}(\mathbf{i}')} = P_{k+2}P_r =  \frac{P_{k+2}}{P_k} \beta_k \prod_{l \in I(k)} P_l $$
  and thus
 $$ P'_{k+1}P'_{(k+1)_{-}(\mathbf{i}')} = \beta_k \frac{P_{k+2}}{P_k} \frac{P_s}{P'_k} \prod_{l \in I'(k+1)} P'_l = \beta_k \frac{P_{k+2}P_s}{P_k}  \frac{\beta_{k+1}P_k}{\beta_kP_{k+2}P_s} \prod_{l \in I'(k+1)} P'_l  \quad \text{by~\eqref{Aprime}.}  $$
 This simplifies as
 $$ P'_{k+1}P'_{(k+1)_{-}(\mathbf{i}')} = \beta_{k+1} \prod_{l \in I'(k+1)} P'_l  $$
  which is the desired equality as $\beta_{k+1}=\beta'_{k+1}$ by Lemma~\ref{mutbetaM}.
 
 The remaining case to consider is $j=k+2$. 
 We have 
 $$ I'(k+2) \sqcup \{k \} =  I(k+1) \sqcup \{ k+1 \} $$
 and all the indices in $I'(k+2)$ other than $k+1$ are strictly smaller than $k$.  Thus  we have 
$$ \prod_{l \in I'(k+2)} P'_l = P'_{k+1} \prod_{l \in I'(k+2) \setminus \{ k+1 \}} P_l =   \frac{P'_{k+1}}{P_k} \prod_{l \in I(k+1)} P_l = \frac{P_{k+2}}{P_k} \prod_{l \in I(k+1)} P_l . $$
 Thus we have 
  $$ P'_{k+2}P'_{(k+2)_{-}(\mathbf{i}')} = P_{k+1}P'_k = P_{k+1} \frac{\beta_k P_{k+2}P_s}{\beta_{k+1}P_k} = \beta_{k+1} \prod_{l \in I(k+1)} P_l \frac{\beta_kP_{k+2}}{\beta_{k+1}P_k} $$
  using Property (B) at $k+1$. This yields
  $$ P'_{k+2}P'_{(k+2)_{-}(\mathbf{i}')} = \beta_k \prod_{l \in I'(k+2)} P'_l  = \beta'_{k+2} \prod_{l \in I'(k+2)} P'_l $$
  using Lemma~\ref{mutbetaM}. This finishes the proof. 
 \end{proof}
 
 \smallskip
 
  Finally we prove that Property (C) propagates under mutation. The key arguments are provided by Lemma~\ref{Dbarhat} and Lemma~\ref{cormult}.
 
 \begin{prop} \label{mutmult}
 Property (C) holds for the seed $\s^{\mathbf{i}'}$ i.e. one has 
 \begin{equation} \label{Cprime} \tag{C'}
  (\beta'_i ; P'_j) - (\beta'_i ; P'_{j_{+}(\mathbf{i}')}) \leq 1 
   \end{equation}
 for every $1 \leq i \leq N$ and $j \in J_{ex}(\mathbf{i}')$. 
 \end{prop}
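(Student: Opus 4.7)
Since $\{\beta'_i\}_{i=1}^N = \{\beta_i\}_{i=1}^N$ as sets by Lemma~\ref{mutbetaM}, it suffices to bound $(\gamma;P'_j) - (\gamma;P'_{j_+(\mathbf{i}')})$ by $1$ for every $\gamma \in \Phi_+$ and every $j \in J_{ex}(\mathbf{i}')$. A direct inspection of $\mathbf{i}$ versus $\mathbf{i}'$ shows that the only indices $j$ for which either $j$ or $j_+(\mathbf{i}')$ lies in $\{k,k+1,k+2\}$ are $j \in \{r,s,k,k+1,k+2\}$, where $r := k_-(\mathbf{i})$ and $s := (k+1)_-(\mathbf{i})$. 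For any other $j$, one has $P'_j = P_j$ and $j_+(\mathbf{i}') = j_+(\mathbf{i})$, and so (C') reduces immediately to (C) at the same index. The cases $j = k+1$ and $j = k+2$ are handled instantly: using $P'_{k+1} = P_{k+2}$, $P'_{k+2} = P_{k+1}$, $(k+1)_+(\mathbf{i}') = (k+2)_+(\mathbf{i})$ and $(k+2)_+(\mathbf{i}') = (k+1)_+(\mathbf{i})$, they reduce respectively to (C) at $k+2$ and at $k+1$.

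The substantive work concerns $j \in \{r,s,k\}$. As a preliminary step I would combine Property (B) at $k$ and at $k+2$, using the inclusion $I(k+2) \setminus \{k+1\} \subseteq I(k)$ together with $I(k) \setminus I(k+2) = \{s\}$ (when $s>0$), to derive the key identity
\[ (\gamma;P_r) - (\gamma;P_{k+2}) = (\gamma;P_s) - (\gamma;P_{k+1}) + \delta_{\gamma = \beta_k} - \delta_{\gamma = \beta_{k+2}} \]
valid for every $\gamma \in \Phi_+$, where $\delta$ denotes the Kronecker symbol. The case $j = r$ then becomes routine: since $P'_{r_+(\mathbf{i}')} = P_{k+2}$, the required inequality is exactly $(\gamma;P_r) - (\gamma;P_{k+2}) \leq 1$, which follows from the identity by splitting on whether $\gamma \in \{\beta_k,\beta_{k+2}\}$ (handled via Lemma~\ref{cormult}) or not (where one invokes Property (C) at index $s$, with the trivial bound when $s=0$).

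The cases $j = k$ and $j = s$ rely on the mutation formula $P'_k = P_r P_{k+2}/\beta_{k+1}$ coming from Corollary~\ref{corPprimek}. My plan is to expand $(\gamma;P'_k)$ using this formula and then rewrite the remaining factors via Property (B) at $k$, $k+1$ and $k+2$, producing an expression of the form
\[ \delta_{\gamma=\beta_k} + \delta_{\gamma=\beta_{k+2}} - \delta_{\gamma=\beta_{k+1}} + (\gamma;R') + (\gamma;P_{\textrm{inord}(k)}) - 2(\gamma;P_k) \]
(for $j = k$, and an analogous one for $j = s$), where $R' := \prod_{l \in I(k+2) \setminus \{k+1\}} P_l$. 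The three exceptional roots $\gamma \in \{\beta_k,\beta_{k+1},\beta_{k+2}\}$ are then disposed of one by one via Lemma~\ref{cormult}, which forces most of the displayed multiplicities to vanish; for a generic $\gamma = \beta_a$ with $a<k$, Property (C) at $k$ (and at $r$ when $r>0$), combined with the non-negativity of multiplicities and the divisibility $R' \mid P_{\textrm{inord}(k)}$ implied by the inclusion above, closes the bound.

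I expect the principal technical obstacle to be the case $j = s$, because there $P'_{s_+(\mathbf{i}')} = P'_k$ itself involves the mutated polynomial, so the inequality couples the formula for $P'_k$ with Property (B) at three consecutive indices. Unlike $j = r$ or $j = k$, no single invocation of (C) is tight enough: one must combine (B) at $k+1$ and $k+2$ with the key identity of Step~3, and carefully track the subcase $\gamma = \beta_{k+1}$ (where Lemma~\ref{cormult} yields $(\beta_{k+1};P_l) = 0$ for all $l<k+1$) in order to absorb the potential $+1$ discrepancy.
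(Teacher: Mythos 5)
Your reduction to $j \in \{r,s,k\}$ and your treatment of $j=r$ are correct, and the ``key identity'' you derive is exactly Lemma~\ref{Dbarhat} rewritten multiplicatively, so that part is sound. But the formula you propose for the new polynomial, $P'_k = P_r P_{k+2}/\beta_{k+1}$, is wrong, and since your entire plan for $j=k$ and $j=s$ rests on expanding $(\gamma;P'_k)$ from it, this is a genuine gap. Corollary~\ref{corPprimek} gives $P'_k = \Pkt/(\beta_{k+1}P_k)$, and by the definition of $\Pkt$ (via Lemma~\ref{Dbarhat}) one has $\Pkt = \beta_k P_{\text{in}(k)} = \beta_k P_s P_{k+2} = \beta_{k+2} P_r P_{k+1}$, hence $P'_k = \dfrac{\beta_k P_s P_{k+2}}{\beta_{k+1} P_k} = \dfrac{\beta_{k+2} P_r P_{k+1}}{\beta_{k+1} P_k}$. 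Your formula would require $\beta_k P_s = P_r P_k$, which by Property~(B) at $k$ amounts to $I(k)=\{s\}$; this fails as soon as $p$ has another neighbour $j\neq q$ with an occurrence straddling position $k$ (e.g. at the trivalent node in type $D$). Carrying the wrong formula through gives, for $j=s$ and $i<s$, an expression like $(\beta_i;P_{k+1}) - 2(\beta_i;P_{k+2})$, which is not controlled by any of your hypotheses.

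With the correct $P'_k$, moreover, the remaining cases collapse and the elaborate expansion involving $R'$, $P_{\text{inord}(k)}$ and $-2(\gamma;P_k)$ is unnecessary: the argument telescopes in one line. For $j=k$ and $i<k$, using $P'_k=\beta_{k+2}P_rP_{k+1}/(\beta_{k+1}P_k)$ one gets $(\beta_i;P'_k)-(\beta_i;P_{k+1}) = (\beta_i;P_r)-(\beta_i;P_k) = (\beta_i;P_r)-(\beta_i;P_{r_+(\mathbf{i})})$, bounded by~(C) at $r$. For $j=s$ and $i<s$, using $P'_k=\beta_kP_sP_{k+2}/(\beta_{k+1}P_k)$ one gets $(\beta_i;P_s)-(\beta_i;P'_k) = (\beta_i;P_k)-(\beta_i;P_{k+2}) = (\beta_i;P_k)-(\beta_i;P_{k_+(\mathbf{i})})$, bounded by~(C) at $k$. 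So your expectation that $j=s$ is the hard case needing (B) at three consecutive indices is misplaced; the three cases $r,s,k$ each reduce to a single invocation of~(C), cyclically at $s$, $k$, $r$ respectively, with the extreme indices $i\geq j$ handled by Lemma~\ref{cormult} (for $j=k$ one needs Lemma~\ref{cormult} applied to $\s^{\mathbf{i}'}$, which is licensed by Proposition~\ref{secondprop} since the proof of Lemma~\ref{cormult} only uses (A) and (B)).
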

 
 \begin{proof}
 First note that $J_{ex}(\mathbf{i}')=J_{ex}(\mathbf{i})$. Note also that there is nothing to prove if $j \notin \{r,s,k,k+1,k+2\}$. Moreover $P'_{k+1} = P_{k+2}$ and $(k+1)_{+}(\mathbf{i}') = (k+2)_{+}(\mathbf{i})$. Thus there is nothing to prove either for $j = k+1$ and similarly for $j = k+2$. We now focus on the cases $j=r, j=s$ and $j=k$. 
 
  Consider the case $j=r$. One has
 $$ (\beta'_i ; P'_r) - (\beta'_i ; P'_{r_{+}(\mathbf{i}')}) = (\beta'_i ; P_r) - (\beta'_i ; P'_{k+1}) = (\beta'_i ; P_r) - (\beta'_i ; P_{k+2}) . $$
   Lemma~\ref{cormult} implies that $(\beta_i ; P_r) = 0$ for every $i>r$. Thus $(\beta'_i ; P_r) = 0$ for every $i>r$ and the desired inequality holds. If $i=r$  then  again Lemma~\ref{cormult} implies $(\beta_i ; P_r) = 1$ and the conclusion is the same as $\beta_r=\beta'_r$. Assume $i<r$. By  Lemma~\ref{Dbarhat}, we have $ \beta_k P_{k+2} P_s = \beta_{k+2} P_r P_{k+1}$. This yields
  $$ (\beta'_i ; P_r) - (\beta'_i ; P_{k+2}) = (\beta_i ; P_r) - (\beta_i ; P_{k+2}) = (\beta_i ; P_s) - (\beta_i ; P_{k+1}) = (\beta_i ; P_s) - (\beta_i ; P_{s_{+}(\mathbf{i})}) . $$
 The desired inequality follows from Property (C) at $j=s$. 
 
  Consider the case $j=s$. One has
  $$ (\beta'_i ; P'_s) - (\beta'_i ; P'_{s_{+}(\mathbf{i}')}) = (\beta'_i ; P_s) - (\beta'_i ; P'_{k}) . $$
  Lemma~\ref{cormult} implies that $(\beta_i ; P_s) = 0$ for every $i>s$. Thus $(\beta'_i ; P_s) = 0$ for every $i>s$ and the desired inequality holds. If $i=s$  then  again Lemma~\ref{cormult} implies $(\beta_i ; P_s) = 1$ and the conclusion is the same as $\beta'_s=\beta_s$. Assume $i<s$. Then we have 
 \begin{align*}
   (\beta'_i ; P_s) - (\beta'_i ; P'_{k}) &=  (\beta_i ; P_s) - (\beta_i ; P'_k)  = (\beta_i ; P_s) - (\beta_i ; P_s P_{k+2}) + (\beta_i ; P_{k})  \\
   &= (\beta_i ; P_{k}) - (\beta_i ; P_{k+2}) = (\beta_i ; P_{k}) - (\beta_i ; P_{k_{+}(\mathbf{i})}) . 
    \end{align*}
  Thus we can conclude using Property (C) at $j=k$. 
  
  Consider the case $j=k$. One has
   $$ (\beta'_i ; P'_k) - (\beta'_i ; P'_{k_{+}(\mathbf{i}')}) = (\beta'_i ; P'_k) - (\beta'_i ; P'_{k+2}) = (\beta'_i ; P'_k) - (\beta'_i ; P_{k+1})  . $$
 Proposition~\ref{secondprop} implies that Equation~\eqref{secondhyp} holds for the seed $\s^{\mathbf{i}'}$ and in particular we can apply Lemma~\ref{cormult} for $\s^{\mathbf{i}'}$ (recall that the statement of Lemma~\ref{cormult} is proved using only Properties (A) and (B)). Therefore  $(\beta'_i ; P'_k) = 0$ if $i>k$ and $(\beta'_k ; P'_k) = (\beta_{k+2} ; P'_k) = 1$. As before we can focus on the case $i<k$. In particular $\beta'_i=\beta_i$. Thus we have 
  \begin{align*}
   (\beta'_i ; P'_k) - (\beta'_i ; P_{k+1}) &=  (\beta_i ; P'_k) - (\beta_i ; P_{k+1}) =  (\beta_i ; P_r P_{k+1}) - (\beta_i ; P_k) - (\beta_i ; P_{k+1})  \\
    & =  (\beta_i ; P_r) - (\beta_i ; P_{k}) =  (\beta_i ; P_r) - (\beta_i ; P_{r_{+}(\mathbf{i})}) 
    \end{align*}
  and the Property (C) at $j=r$ allows us to conclude. 
   This proves that Property~\eqref{Cprime}  holds. 
 \end{proof}
    
 \section{Initial seed in type $A_n$}
 \label{initAn}


 In this section we prove Theorem~\ref{thminitcond} in the case $\mathfrak{g} = \mathfrak{sl}_{n+1}$ where $n \geq 1$ is fixed. 
 We denote by $I = \{1, \ldots , n \}$ the index set of the simple roots and we consider the natural order on $I$ given by $1<2< \cdots < n$. As explained in Section~\ref{reminddeterm}, this yields a convex order on the set $\Phi_{+}$ of positive roots, corresponding to the reduced expression of $w_0$ given by 
 $$ \mathbf{i}_{nat} :=  (1,2,1,3,2,1, \ldots , n,n-1, \ldots , 1) . $$
 The aim of this section is to check that the standard seed $\s^{\mathbf{i}_{nat}}$ satisfies Properties (A), (B) and (C). We use Kang-Kashiwara-Kim-Oh's monoidal categorification of the cluster structure of $\mathbb{C}[N]$ via representations of quiver Hecke algebras. More precisely, the cluster variables of  $\s^{\mathbf{i}_{nat}}$ are categorified by certain determinantal modules in $R-gmod$. These were explicitly described in \cite{Casbi} in  terms of Kleshchev-Ram's dominant words. Let us briefly remind the necessary setting. 
 
 The set $\mathcal{GL}$ of good Lyndon words is given by 
 $$ \mathcal{GL} = \{ (i, i+1, \ldots , j) \mid i,j \in I,  i \leq j \} . $$
 Recall that this set is totally ordered with respect to the lexicographic order induced by the chosen order on $I$. The dominant words parametrizing the simple objects in $R-gmod$ according to Kleshchev-Ram's classification (see Section~\ref{remindKLR3}) are concatenations of elements of $\mathcal{GL}$ in the decreasing order. Dominant words thus coincide with Zelevinsky's multisegments in this case. For any dominant word, we denote by $L(\mu)$ the unique (up to isomorphism and grading shift) simple module associated to $\mu$.
 
 For every $i \leq j$, we will use the notation $[i;j]$ for the positive root $\alpha_i + \cdots + \alpha_j$. The integer $j-i+1$ is called the \textit{height} of this positive root. For each $1 \leq r \leq n$ the occurrences of $r$ in $\mathbf{i}_{nat}$ correspond to positive roots of height $r$. More precisely, for every $1 \leq k \leq n$ the $k$th occurrence of $r$ in $\mathbf{i}_{nat}$ corresponds to the positive root $[k;r+k-1]$. Equivalently for every $i \leq j$, $[i;j]$ is the positive root corresponding to the $i$th occurrence of $j-i+1$ in $\mathbf{i}_{nat}$. 
 
 Now consider the standard seed $\s^{\mathbf{i}_{nat}}$ of $\mathbb{C}[N]$, let $x_1, \ldots, x_N$ denote its cluster variables and $M_1, \ldots , M_N$ denote the corresponding determinantal modules in $R-gmod$. The dominant words associated to $M_1, \ldots, M_N$ were computed in \cite{Casbi} and are given as follows. 
 
 \begin{prop}[{{\cite[Theorem 6.1]{Casbi}}}] \label{initseedAn}
 For every $1 \leq r \leq n$ and $1 \leq k \leq n-r+1$ the determinantal module corresponding to the $k$th occurrence of $r$ in $\mathbf{i}_{nat}$ is 
 $$ L \left( [k;r+k-1] [k-1;r+k-2] \cdots [1;r] \right) . $$
 \end{prop}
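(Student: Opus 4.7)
The plan is to obtain Proposition~\ref{initseedAn} as a direct application of Theorem~\ref{thmcasbi2}, since $\mathbf{i}_{nat}$ is exactly the reduced expression $\mathbf{i}_{<}$ associated with the natural ordering $1 < 2 < \cdots < n$ on $I$. That theorem says that, for each position $\ell$, the determinantal module $M_\ell^{\mathbf{i}_{nat}}$ is $L(\mu_\ell)$ where $\mu_\ell$ is the concatenation (in decreasing lexicographic order) of the good Lyndon words attached to the positions $j \leq \ell$ with $i_j = i_\ell$, each appearing with multiplicity one.

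The first step is a combinatorial bookkeeping: I want to verify that the $k$th occurrence of the letter $r$ in $\mathbf{i}_{nat}$ corresponds to the positive root $\beta = [k;\,r+k-1]$, equivalently to the good Lyndon word $(k, k+1, \ldots, r+k-1)$. This is established by observing that $\mathbf{i}_{nat}$ decomposes into consecutive blocks $(m, m-1, \ldots, 1)$ for $m = 1, \ldots, n$; in block $m$ the letter $r$ appears exactly once (whenever $m \geq r$) at the internal position $m - r + 1$, so the $k$th occurrence of $r$ overall sits in block $m = r + k - 1$. A direct induction on $m$ using $\beta_p = s_{i_1} \cdots s_{i_{p-1}}(\alpha_{i_p})$ then yields the root $[k;\,r+k-1]$: the base case is block $1$ giving $\alpha_1$, and the inductive step relies on the identity $s_1 s_2 \cdots s_{r-1}(\alpha_r) = \alpha_1 + \cdots + \alpha_r$ combined with the roots produced by the previous blocks.

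Now let $\ell$ denote the position of the $k$th occurrence of $r$ in $\mathbf{i}_{nat}$. By Theorem~\ref{thmcasbi2}, the dominant word $\mu_\ell$ is the product of the good Lyndon words at positions $j \leq \ell$ with $i_j = r$, which are precisely the $m$th occurrences of $r$ for $m = 1, \ldots, k$. By the previous paragraph, these correspond to the Lyndon words $(m, m+1, \ldots, r+m-1)$, i.e.\ to the roots $[m;\,r+m-1]$. It remains to order these $k$ words in decreasing lexicographic order: comparing $(m, m+1, \ldots, r+m-1)$ and $(m', m'+1, \ldots, r+m'-1)$ with $m < m'$, the very first letters differ and force the first word to be the smaller one, so the lexicographic order on these factors agrees with the natural order on the index $m$. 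The decreasing concatenation therefore reads
\[
\mu_\ell = [k;\,r+k-1]\,[k-1;\,r+k-2] \cdots [1;\,r],
\]
which is the claimed formula.

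The mildly delicate point is the root-to-occurrence bookkeeping of the first step; once this combinatorial dictionary is in place, everything else is a direct unpacking of Theorem~\ref{thmcasbi2} together with the explicit description of $\mathcal{GL}$ in type $A_n$ and its lexicographic order.
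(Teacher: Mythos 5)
Your derivation is correct, and it is worth noting that the paper itself does not prove Proposition~\ref{initseedAn} but cites it from \cite{Casbi}; your proof via Theorem~\ref{thmcasbi2} is therefore a self-contained rederivation rather than a reproduction of an argument present in the paper. The three ingredients — (i) the $k$th occurrence of $r$ in the chosen reduced expression corresponds to the positive root $[k;r+k-1]$, (ii) Theorem~\ref{thmcasbi2} identifies the dominant word as the (multiplicity-free) concatenation of the good Lyndon words attached to positions $j\le\ell$ with $i_j=r$, and (iii) these $k$ words $(p,p+1,\ldots,r+p-1)$ compare by their first letter, so decreasing order is decreasing in $p$ — do fit together and yield the stated formula.

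One point deserves care. The expression $\mathbf{i}_{<}$ for which Theorem~\ref{thmcasbi2} is formulated is, by definition, the one making $\beta_1<\beta_2<\cdots<\beta_N$ agree with the lexicographic order on $\mathcal{GL}$; in type $A_n$ that is $(1,2,\ldots,n,\,1,2,\ldots,n-1,\,\ldots,\,1,2,\,1)$, with blocks of decreasing size. The paper's $\mathbf{i}_{nat}=(1,2,1,3,2,1,\ldots,n,n-1,\ldots,1)$ has blocks $(m,m-1,\ldots,1)$ of increasing size and is a different reduced word (already for $A_3$: $(1,2,1,3,2,1)$ versus $(1,2,3,1,2,1)$). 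They lie in the same commutation class, and since a commutation move $i_j\cdot i_{j+1}=0$ merely swaps $x_j$ and $x_{j+1}$ in the standard seed while leaving the ``$k$th occurrence of $r$'' bookkeeping and the root $[k;r+k-1]$ unchanged, the conclusion is the same — but the phrase ``$\mathbf{i}_{nat}$ is exactly $\mathbf{i}_{<}$'' should be replaced by an explicit appeal to commutation-invariance of determinantal modules, or you should carry out the block-structure calculation directly for the decreasing-block expression $\mathbf{i}_{<}$ so that Theorem~\ref{thmcasbi2} applies verbatim. (The paper's own Section~\ref{initAn} shares this ambiguity, so this is not a defect introduced by you, but it is exactly the place where a careful write-up must pause.)

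The inductive sketch in your first step is plausible but thin: the clean version is to observe that after the first $m-1$ blocks of $\mathbf{i}_{nat}$ one has processed the longest element of the parabolic $\langle s_1,\ldots,s_{m-1}\rangle$, and then to compute $s_m s_{m-1}\cdots s_{m-p+2}(\alpha_{m-p+1})=[m-p+1;m]$ followed by the action of that longest element; spelling this out would close the remaining informality.
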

 
  \smallskip
  
 For every $0 \leq r \leq n$ and $0 \leq k \leq n-r+1$  we set 
$$ P[k,r] :=  \prod_{1 \leq l \leq k \leq m \leq r+k-1} [l;m] . $$
Note that $P[k,r] = 1$ if $k=0$ or $r=0$. 
We begin by proving that the standard seed $\s^{\mathbf{i}_{nat}}$ satisfies Property (A). More precisely, we show that if $j$ is the position of the $k$th occurrence of the letter $r$ in $\mathbf{i}_{nat}$, then $P_j = P[k,r]$. 
 
 \begin{lem} \label{Dbarinit}
  For any $1 \leq r \leq n$ and $1 \leq k \leq n-r+1$, the determinantal module   $L \left( [k;r+k-1] \cdots [1;r] \right)$ is strongly homogeneous and one has 
  $$ \overline{D} \left( L \left( [k;r+k-1] \cdots [1;r] \right) \right) = \frac{1}{P[k,r]} . $$
In particular the standard seed $\s^{\mathbf{i}_{nat}}$ satisfies Property (A).
  \end{lem}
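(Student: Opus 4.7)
The strategy is to identify each determinantal module $L_{k,r} := L([k;r+k-1][k-1;r+k-2]\cdots[1;r])$ with a strongly homogeneous module $S(w_{k,r})$ for a suitable dominant minuscule element $w_{k,r} \in W$. Once this identification is made, Proposition~\ref{propbarD} gives $\overline{D}([L_{k,r}]) = \prod_{\beta \in \Phi_+^{w_{k,r}}} 1/\beta$, which we will check equals $1/P[k,r]$. The link between the combinatorial side (Nakada's colored hook formula) and the representation-theoretic side (Kleshchev--Ram's homogeneous modules) is exactly what makes this reduction possible.

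Concretely, my candidate is
$$ w_{k,r} := (s_k s_{k+1} \cdots s_{k+r-1})(s_{k-1} s_k \cdots s_{k+r-2}) \cdots (s_1 s_2 \cdots s_r), $$
whose distinguished reduced expression $\mathbf{j}_0$ is precisely the concatenation of the good Lyndon words $[k;r+k-1], [k-1;r+k-2], \ldots, [1;r]$ that forms the dominant word of $L_{k,r}$. First I would verify that $\mathbf{j}_0$ is reduced and compute $\Phi_+^{w_{k,r}}$ via $\beta_j = s_{j_1}\cdots s_{j_{j-1}}(\alpha_{j_j})$: an easy induction on the blocks yields $\Phi_+^{w_{k,r}} = \{[l;m] \mid 1 \le l \le k,\ k \le m \le r+k-1\}$, so that $\prod_{\beta \in \Phi_+^{w_{k,r}}} \beta = P[k,r]$. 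Next I would check that $w_{k,r}$ is dominant minuscule by applying Stembridge's criteria (Theorem~\ref{Stembridge}) to $\mathbf{j}_0$: between two consecutive occurrences of any letter $p$ in $\mathbf{j}_0$ there appears exactly one $p-1$ and one $p+1$ together with letters commuting with $p$, giving the required sum $-2$; and the tails past the last occurrence of each letter contribute at most one $-1$, giving sums $\ge -1$.

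The substantive step is the identification $L_{k,r} \cong S(w_{k,r})$. By Kleshchev--Ram's classification $\max(L_{k,r}) = \mathbf{j}_0$ by construction, while by Theorem~\ref{thmKRhom} $\max(S(w_{k,r}))$ is the lex-largest reduced expression of $w_{k,r}$; so it suffices to prove that $\mathbf{j}_0$ is itself lex-maximal. Since $w_{k,r}$ is fully-commutative, every reduced expression is obtained from $\mathbf{j}_0$ by commutations of adjacent letters $i,j$ with $|i-j|\ge 2$; I would argue by induction on $kr$, observing that $s_k$ is the unique left descent of $w_{k,r}$ (so every reduced expression starts with $k$), and that $s_k w_{k,r}$ factors as the analogous element for a $k \times (r-1)$ and a $(k-1) \times r$ situation, allowing one to peel off the top row $[k;r+k-1]$. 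This is the main combinatorial obstacle: without it the remaining steps are routine. Once established, $L_{k,r} \cong S(w_{k,r})$ is strongly homogeneous, Proposition~\ref{propbarD} gives $\overline{D}([L_{k,r}]) = 1/P[k,r]$, and combining with Proposition~\ref{initseedAn} yields Property (A) for every cluster variable of $\mathcal{S}^{\mathbf{i}_{nat}}$.
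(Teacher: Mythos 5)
Your proof follows the same route as the paper's: introduce the element $w_{k,r}$ whose concatenated-segment reduced expression is the dominant word $\mathbf{j}_0$, verify via Stembridge's criterion that it is dominant minuscule, compute $\Phi_+^{w_{k,r}} = \{[l;m] : 1 \le l \le k \le m \le r+k-1\}$ by induction, and invoke Proposition~\ref{propbarD}. You are in fact somewhat more careful than the paper: you explicitly flag that the identification $L_{k,r} \cong S(w_{k,r})$ requires $\mathbf{j}_0$ to be the lexicographically largest reduced expression of $w_{k,r}$ (so that $\max(L_{k,r}) = \max(S(w_{k,r}))$), whereas the paper passes from ``$w[k,r]$ is dominant minuscule'' to ``$L(\mathbf{j}_0)$ is strongly homogeneous'' without comment; your sketch of the lex-maximality argument is slightly imprecise (after peeling off $s_k$ the remaining element is not literally of the form $w_{k',r'}$, so the recursion needs a more careful formulation), but the idea is sound and the overall strategy matches the paper.
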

  
   \begin{proof}
   Let $w[k,r]$ denote the element of $W$ given by 
   $$ w[k,r] : = s_k s_{k+1} \cdots s_{r+k-1} s_{k-1} s_k \cdots s_{r+k-2} \cdots s_1 s_2  \cdots s_r . $$
   It is immediate to check that for any $1 \leq j \leq n$, there is exactly one occurrence of each neighbour of $j$ between two consecutive occurrences of $j$ in the word $(k, k+1, \ldots , r+k-1, k-1, k, \ldots , r+k-2 , \ldots, 1, 2 , \ldots r)$. Moreover the last occurrence of $j$ is either strictly inside the last segment (if $1 \leq j < r$) and in this case there is exactly one occurrence of a neighbour of $j$ (namely $j+1$) after this occurrence, or it is the last letter of one of the segments (if $j \geq r$) in which case there is exactly one occurrence of $j-1$ and no occurrence of $j+1$ after this occurrence. Therefore there cannot be any subword of the form $s_is_js_i$ with $i \cdot j =-1$ in any reduced expression of $w[k,r]$ (i.e. $w[k,r]$ is fully-commutative in the sense of Defintion~\ref{definitionfullcom}) and the chosen reduced expression is reduced. Moreover using Stembridge's results \cite{Stem} (see Theorem~\ref{Stembridge}) we conclude that $w[k,r]$ is dominant minuscule. Hence by the construction of Kleshchev-Ram \cite{KRhom} (see Theorem~\ref{thmKRhom}) the determinantal module $L \left( [k;r+k-1] \cdots [1;r] \right)$ is strongly homogeneous. 
   
   By Proposition~\ref{propbarD}, we have 
$$  \overline{D} \left( L \left( [k;r+k-1] \cdots [1;r] \right) \right) =  \prod_{\beta \in \Phi_{+}^{w[k,r]}} \frac{1}{\beta} . $$
 We prove by induction on $k$ that for any $r \geq 1$, 
$$ \Phi_{+}^{w[k,r]} = \{ [l;m]  , 1 \leq l \leq k \leq m \leq r+k-1 \} . $$
If $k=1$ then  for any $r \geq 1$ one has 
$$ \Phi_{+}^{w[1,r]} =  \Phi_{+}^{s_1 s_2  \cdots s_r} = \{ \alpha_1 , \alpha_1 + \alpha_2 , \ldots , \alpha_1 + \cdots + \alpha_r \} = \{ [1;m] , 1 \leq m \leq r \} $$
which is the desired equality for $k=1$. Assume the result holds at rank $k-1$. It is straightforward to check that for every $l,m$ such that $1 \leq l \leq k-1 \leq m \leq r+k-2$, we have 
$$ s_k s_{k+1} \cdots s_{r+k-1}([l;m])  = [l;m+1] . $$
Applying the induction hypothesis, this implies 
$$ s_k s_{k+1} \cdots s_{r+k-1} \left( \Phi_{+}^{w[k-1,r]} \right) =  \{ [l;m+1]  , 1 \leq l \leq k-1 \leq m \leq r+k-2 \} . $$
Thus we get 
\begin{align*}
 \Phi_{+}^{w[k,r]} &= \{ [k] , [k;k+1] , \ldots, [k;r+k-1] \} \sqcup s_k s_{k+1} \cdots s_{r+k-1} \left( \Phi_{+}^{w[k-1,r]} \right)  \\
 &= \{ [k;m] , k \leq m \leq r+k-1  \} \sqcup  \{ [l;m+1]  , 1 \leq l \leq k-1 \leq m \leq r+k-2 \}  \\
  &=  \{ [k;m] , k \leq m \leq r+k-1  \} \sqcup   \{ [l;m]  , 1 \leq l \leq k-1 < m \leq r+k-1 \}  \\
  &=  \{ [l;m]  , 1 \leq l \leq k \leq m \leq r+k-1 \} . 
 \end{align*}
This finishes the proof. 
   \end{proof}
   
   \begin{cor}
   The standard seed $\s^{\mathbf{i}_{nat}}$ satisfies Property (C). 
      \end{cor}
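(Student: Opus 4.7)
The plan is very short here, because in type $A_n$ the polynomials $P_j$ produced by the previous lemma are visibly multiplicity-free, and Property (C) is an essentially free consequence of that. More precisely, Lemma~\ref{Dbarinit} identifies each $P_j$ with a polynomial of the form
$$ P[k,r] = \prod_{1 \leq l \leq k \leq m \leq r+k-1} [l;m], $$
and the pairs $(l,m)$ appearing in this indexing set are pairwise distinct, so the positive roots $[l;m]$ entering the product are pairwise distinct. Hence every $P_j$ is square-free.

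First I would make the square-free observation explicit: for any positive root $\beta \in \Phi_+$ and any $j \in \{1, \ldots, N\}$, one has $(\beta ; P_j) \in \{0,1\}$. Then, since $(\beta ; P_{j_+(\mathbf{i})}) \geq 0$ always, the difference
$$ (\beta_i ; P_j) - (\beta_i ; P_{j_+(\mathbf{i})}) $$
is bounded above by $(\beta_i ; P_j) \leq 1$, which is exactly Property (C). There is no obstacle to overcome: the point is just that in type $A_n$ the strict inequality in (C) is a formality, as announced in the remark following Theorem~\ref{thmpropag}.

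I would write this as a two-line proof: cite Lemma~\ref{Dbarinit} to get the explicit form $P_j = P[k,r]$; note that the indexing set $\{(l,m) : 1 \leq l \leq k \leq m \leq r+k-1\}$ consists of pairwise distinct pairs so that $P[k,r]$ is square-free; deduce $(\beta_i ; P_j) \leq 1$ and conclude. Combined with Lemma~\ref{Dbarinit} (which establishes (A)) and the verification of (B) that is presumably the substantive content of this section (still to be written), this finishes the proof of Theorem~\ref{thminitcond} in type $A_n$.
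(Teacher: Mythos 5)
Your proof is correct and matches the paper's argument: both observe that Lemma~\ref{Dbarinit} gives each $P_j$ as a product of pairwise distinct positive roots $[l;m]$, hence square-free, so $(\beta_i;P_j)\leq 1$ and Property (C) follows at once since $(\beta_i;P_{j_+(\mathbf{i})})\geq 0$. The paper states this more tersely, but the reasoning is identical.
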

      
       \begin{proof}
   It follows from the previous Lemma that the multiplicity of any positive root in any of the polynomials $P_j , 1 \leq j \leq N$ is always equal to $0$ or $1$. In particular the Property (C) is trivially satisfied. 
       \end{proof}
   
   \smallskip
   
 Now we check that Property (B) is also satisfied.  For each $j \in \{1 , \ldots , N \}$, the polynomials involved in the right hand-side of Property (B) at the label $j$ correspond to the last occurrences in $\mathbf{i}_{nat}$ of each neighbour of $i_j$ (strictly) before the position $j$. Thus if $i_j := r$, we have to consider the last occurrences of $r+1$ and $r-1$ before $j$. 
   
   \begin{lem}   
   The standard seed $\s^{\mathbf{i}_{nat}}$ satisfies Property (B). 
   \end{lem}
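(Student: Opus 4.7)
Fix $j \in \{1,\dots,N\}$ and write $i_j = r$, so that $j$ is the $k$th occurrence of the letter $r$ in $\mathbf{i}_{nat}$ for some $1 \leq k \leq n-r+1$. By Lemma~\ref{Dbarinit} we immediately read off that $P_j = P[k,r]$ and $\beta_j = [k;r+k-1]$. Since $j_{-}(\mathbf{i}_{nat})$ is by definition the position of the previous occurrence of $r$, it is the $(k-1)$th occurrence of $r$, so $P_{j_{-}} = P[k-1,r]$ (which equals $1$ when $k=1$, by the convention $P[0,r]=1$).

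Next I identify the indices $l$ on the right-hand side of (B), namely those with $l<j<l_{+}(\mathbf{i}_{nat})$ and $i_l \cdot i_j = -1$. These are precisely the latest occurrences of $r-1$ and of $r+1$ strictly before position $j$. The block structure of $\mathbf{i}_{nat}$ makes this transparent: $\mathbf{i}_{nat}$ is the concatenation of the blocks $(i,i-1,\dots,1)$ for $i=1,\dots,n$, and the $m$th occurrence of a letter $a$ lies at position $m$ within block $a+m-1$. Inside the block $r+k-1$ containing $j$, the letter $r+1$ appears at position $k-1$ (just before $j$) and the letter $r-1$ appears at position $k+1$ (just after $j$). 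Therefore
\begin{itemize}
\item the last occurrence of $r+1$ before $j$ is the $(k-1)$th one, giving the contribution $P[k-1,r+1]$;
\item the last occurrence of $r-1$ before $j$ lies in the previous block $r+k-2$, at position $k$, and is the $k$th occurrence of $r-1$, giving the contribution $P[k,r-1]$.
\end{itemize}
In both cases one checks immediately that $l_{+}(\mathbf{i}_{nat})>j$: for $i_l=r-1$, $l_{+}$ is in the same block as $j$ at position $k+1$; for $i_l=r+1$, $l_{+}$ is in the next block $r+k$. The edge cases $r=1$, $r=n$, $k=1$ are absorbed by the conventions $P[k,0]=P[0,s]=1$.

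Property (B) is thus reduced to the following algebraic identity among products of positive roots:
$$ P[k,r] \cdot P[k-1,r] \;=\; [k;r+k-1] \cdot P[k,r-1] \cdot P[k-1,r+1]. $$
Both sides are finite products of factors of the form $[l;m]$, and the identity will follow from showing that both sides are indexed by the same multiset of pairs $(l,m)$. The left-hand side corresponds to the union with multiplicity of the rectangles $\{1,\dots,k\}\times\{k,\dots,r+k-1\}$ and $\{1,\dots,k-1\}\times\{k-1,\dots,r+k-2\}$, while the right-hand side corresponds to $\{1,\dots,k\}\times\{k,\dots,r+k-2\}$, $\{1,\dots,k-1\}\times\{k-1,\dots,r+k-1\}$ and the singleton $\{(k,r+k-1)\}$. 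A direct inclusion--exclusion: in both cases the overlap occurs on the sub-rectangle $\{1,\dots,k-1\}\times\{k,\dots,r+k-2\}$ (counted with multiplicity $2$), and the remaining multiplicity-$1$ contributions are seen to coincide after splitting $\{k\}\times\{k,\dots,r+k-1\}$ from the left-hand side as $\{k\}\times\{k,\dots,r+k-2\}\cup\{(k,r+k-1)\}$.

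There is no conceptual obstacle: the only work is careful bookkeeping of the index sets, which is harmless once the block structure of $\mathbf{i}_{nat}$ has been used to pin down the relevant positions $j_{-},l$. I would sanity-check the identity on a small case (e.g.\ $r=2,k=2$) before writing out the general combinatorial verification.
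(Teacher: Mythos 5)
Your proposal is correct and follows essentially the same route as the paper: identify $P_j = P[k,r]$ and $P_{j_-} = P[k-1,r]$ via Lemma~\ref{Dbarinit}, locate the last occurrences of $r\pm 1$ before $j$ (giving $P[k,r-1]$ and $P[k-1,r+1]$), reduce Property (B) to the identity $P[k,r]\,P[k-1,r] = [k;r+k-1]\,P[k,r-1]\,P[k-1,r+1]$, and check it. The only cosmetic difference is that the paper verifies the final identity by an algebraic rearrangement of the two products, whereas you propose a multiset inclusion--exclusion on the index rectangles; both are straightforward and equivalent, and your bookkeeping of the block positions of $\mathbf{i}_{nat}$ (which also handles the boundary cases via $P[k,0]=P[0,s]=1$) is sound.
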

 
 \begin{proof}
 
Fix $1 \leq r \leq n$ and $1 \leq k \leq n-r+1$  and let $j \in \{1, \ldots , N \}$ denote the position of the $k$-th occurrence of $r$ in $\mathbf{i}_{nat}$. By definition $j_{-}$ corresponds to the $(k-1)$-th occurrence of $r$. Hence  by Proposition~\ref{initseedAn} together with Lemma~\ref{Dbarinit}, we have $P_j=P[k,r]$ and $P_{j_{-}}=P[k-1,r]$ (note that $j_{-}=0$ if and only if $k=1$ and in this case we have $P_0=P[0,r]=1$). 
It is not hard to see that if $r \leq n-1$ then the letter $r+1$ has appeared exactly $k-1$ times before $j$ in $\mathbf{i}_{nat}$ as it only appears in the subwords of $\mathbf{i}_{nat}$ of the form $s, \ldots , 1 $ with $s > r$. Hence the last occurrence of $r+1$ before $j$ in $\mathbf{i}_{nat}$ corresponds to the $k-1$th occurrence of $r+1$ in $\mathbf{i}_{nat}$. Thus by Proposition~\ref{initseedAn} the associated determinantal module is
$$ L \left( [k-1;r+k-1] [k-2;r+k-2] \cdots [1;r+1] \right) . $$
 Similarly one can check that if $r \geq 2$ then the letter $r-1$ appears exactly $k$ times before $j$ in $\mathbf{i}_{nat}$. Therefore the last occurrence of $r-1$ before $j$ corresponds to the $k$ occurrence of $r-1$ and Proposition~\ref{initseedAn} implies that the associated determinantal module is 
 $$  L \left( [k;r+k-2] [k-1;r+k-3] \cdots [1;r-1] \right) . $$
We can now rewrite Property (B) at the label $j$ as 
$$  P_j P_{j_{-}} = [k;r+k-1]  P[k,r-1]  P[k-1,r+1]  $$
for any $1 \leq r \leq n$ and $1 \leq k \leq n-r+1$.
 We have 
\begin{align*}
 P_j P_{j_{-}} & =  P[k,r] P[k-1,r] \\
  &= \prod_{1 \leq l \leq k \leq m \leq r+k-1} [l;m] \quad  \prod_{1 \leq l \leq k-1 \leq m \leq r+k-2} [l;m]   \\
   &=   \left( \prod_{1 \leq l \leq k} [l;r+k-1]  \prod_{1 \leq l \leq k \leq m \leq r+k-2} [l;m]  \right) \\
    & \qquad \qquad \qquad   \left(  \left( \prod_{1 \leq l \leq k-1} [l;r+k-1] \right)^{-1}  \prod_{1 \leq l \leq k-1 \leq m \leq r+k-1} [l;m]  \right) \\
    &= [k;r+k-1] \prod_{1 \leq l \leq k \leq m \leq r+k-2} [l;m] \quad  \prod_{1 \leq l \leq k-1 \leq m \leq r+k-1} [l;m]   \\
     &= [k;r+k-1]  P[k,r-1]  P[k-1,r+1] .
 \end{align*}
This proves that Property (B) is satisfied. 
  \end{proof}

\section{Initial seed in type $D_4$}
\label{initD4}

  This section is devoted to the proof of Theorem~\ref{thminitcond} when $\mathfrak{g}$ is of type $D_4$. In this case, determinantal modules are not necessarily homogeneous. Hence we cannot always use the results of \cite{KRhom,Nakada} to compute the images under $\overline{D}$ of the flag minors. Therefore, the most difficult part is to prove that Property (A) is satisfied for a certain standard seed. Properties (B) and (C) will then be rather straightforward to check. 
   
   \bigskip
   
  We fix the natural ordering on the set of vertices of its Dynkin diagram as in {{\cite[Section 8.7]{KR}}}, i.e. $1<2<3<4$ with $3$ being the trivalent node. There are twelve positive roots and hence also twelve cluster variables in every seed, with four frozen variables and eight unfrozen variables. The good Lyndon words (as well as the corresponding cuspidal representations in $R-gmod$) can be found in {{\cite[Section 8.7]{KR}}} or can be directly computed using the algorithm described by Leclerc {{\cite[Section 4.3]{Leclerc}}}. Let us range them in the increasing order:
  $$ \mathcal{GL} = \{ 1 < 13 < 132 < 134 < 1342 < 13423 < 2 < 23 < 234 < 3 < 34 < 4 \} . $$  
 Thus the corresponding  convex order on the set of positive roots is given by
 \begin{align*}
  \Phi_{+} = \{ & \alpha_1   < \alpha_1 + \alpha_3   <  \alpha_1 + \alpha_3 + \alpha_2  <   \alpha_1 + \alpha_3 + \alpha_4  < \alpha_1 + \alpha_2 +  \alpha_3 + \alpha_4  \\
   & < \alpha_1 + 2 \alpha_3 + \alpha_2 + \alpha_4 < \alpha_2 < \alpha_2 + \alpha_3 < \alpha_2 + \alpha_3 + \alpha_4 < \alpha_3 < \alpha_3 + \alpha_4 < \alpha_4 \} . 
  \end{align*}
  The corresponding reduced expression of $w_0$ is $\mathbf{i}_{nat} = (1,3,2,4,3,1,4,3,2,4,3,4)$. Using Theorem~\ref{thmcasbi2}, we find the following dominant words for the determinantal modules of the seed ${\s}^{\mathbf{i}_{nat}}$:
  $$ 1 , 13 , 132 , 134 , 134213 , 134231 , 2134 , 23134213 , 234132 , 32134 , 3423134213 , 432134 . $$
 Among these determinantal modules, those whose dominant words are  $134231, 234132,  \allowbreak  3423134213$ and $432134$ correspond to the frozen variables in $\mathbb{C}[N]$. Three of them turn out to be strongly homogeneous but one of them is not homogeneous, namely $L(3423134213)$. From now on we denote this module by $M$.

  \subsection{Computation of a graded character}
  
  In this subsection we determine the whole graded character of $M$, as an intermediate step for the computation of $\overline{D}([M])$. 
  
 As the isomorphism class of $M$ is a frozen variable, it follows from the monoidal categorification results of Kang-Kashiwara-Kim-Oh \cite{KKKO} that $M$ $q$-commutes with every simple module in $R-gmod$. In particular, it commutes with all the cuspidal representations in $R-gmod$. This strong property constrains the form of the graded character of this module. Actually, it is sufficient to use the fact that $M$ $q$-commutes with the four cuspidal modules corresponding to the simple roots $\alpha_i , i \in \{1, \ldots , 4 \}$. First one needs to determine the homogeneous degrees of the renormalized $R$-matrices $r_{M , L(i)}$ for every $i \in \{1, \ldots , 4 \}$. We denote these degrees by $\Lambda \left( M , L(i) \right)$ and $\Lambda \left( L(i) , M  \right)$ following \cite{KKKO}. The commuting of $M$ and $L(i)$ yields an isomorphism of graded modules 
 \begin{equation} \label{isocomut}
  M \circ L(i) \simeq q^{\Lambda (L(i),M)} L(i) \circ M 
  \end{equation}
 for every $i \in I$. 
 
  \begin{lem} \label{lemcomut}
   Let $M := L(3423134213)$. Then one has 
  $$ \Lambda \left( M , L(i) \right) = \Lambda \left( L(i) , M  \right) = 0 $$
  for every $1 \leq i \leq 4$,
  \end{lem}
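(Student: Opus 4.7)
The plan is to deduce the vanishing of both $\Lambda$-values from the fact that $M$ is the determinantal module attached to a frozen cluster variable of $\mathbb{C}[N]$, combined with the general properties of renormalized $R$-matrices developed by Kang-Kashiwara-Kim-Oh.

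First, I would invoke the monoidal categorification of Kang-Kashiwara-Kim-Oh (\cite[Theorem 11.2.2]{KKKO}) applied to the standard seed $\s^{\mathbf{i}_{nat}}$. The isomorphism class $[M] = D(w_0\omega_3,\omega_3)$ is a frozen cluster variable in this seed. By the general theory of monoidal seeds, the determinantal module $M$ is real and moreover commutes with every simple object in $R\text{-gmod}$, in the sense that the convolution product $M \circ L$ is simple for every simple module $L$. Specializing to $L = L(i)$ for each $i \in I$, this yields the required commutation property, so that an isomorphism of the form~\eqref{isocomut} exists for each $i$.

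Secondly, I would invoke the general properties of the invariants $\Lambda$ and $\mathfrak{d}$ from \cite[Section 3]{KKKO}. Recall that for a pair $(X,Y)$ of simple objects in $R\text{-gmod}$ at least one of which is real, both $\Lambda(X,Y)$ and $\Lambda(Y,X)$ are non-negative integers, and their half-sum $\mathfrak{d}(X,Y) = (\Lambda(X,Y) + \Lambda(Y,X))/2$ vanishes precisely when $X \circ Y$ is simple. Equivalently, composing the two $R$-matrices $r_{M,L(i)}$ and $r_{L(i),M}$ produces a graded endomorphism of the simple module $M \circ L(i)$ of degree $\Lambda(M,L(i)) + \Lambda(L(i),M)$; as the endomorphism ring of a simple graded module is one-dimensional, this degree must vanish.

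Combining these two ingredients at each $i \in I$ yields $\Lambda(M,L(i)) + \Lambda(L(i),M) = 0$, and together with non-negativity of each summand this forces $\Lambda(M,L(i)) = \Lambda(L(i),M) = 0$, as claimed. The conceptual content of the argument is entirely contained in the first step: once frozen-ness is used, the vanishing of $\Lambda$ is a formal consequence of the definitions. The main obstacle, and the point one should be careful about, is ensuring that the determinantal module $M$ indeed corresponds to a genuinely frozen variable (as opposed to merely an exchangeable cluster variable), which is guaranteed here because the position of $M$ in $\mathbf{i}_{nat}$ satisfies $j_+(\mathbf{i}_{nat}) = N+1$, placing it in $J_{fr}(\mathbf{i}_{nat})$.
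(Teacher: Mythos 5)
Your argument is clean for $i \in \{1,2,4\}$ but has a genuine gap for $i = 3$, precisely at the point where you invoke non-negativity of $\Lambda$.

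The claim that "for a pair $(X,Y)$ of simple objects at least one of which is real, both $\Lambda(X,Y)$ and $\Lambda(Y,X)$ are non-negative integers" is false in general. What the KKKO theory gives is non-negativity of $\mathfrak{d}(X,Y) = \frac{1}{2}\left(\Lambda(X,Y)+\Lambda(Y,X)\right)$ and of $\widetilde{\Lambda}(X,Y) = \frac{1}{2}\left(\Lambda(X,Y)+(\wt X,\wt Y)\right)$; the latter only yields the lower bound $\Lambda(X,Y) \geq -(\wt X,\wt Y)$, which is a negative number whenever $(\wt X,\wt Y) > 0$. In the present situation $\wt(M) = 2(\alpha_1+\alpha_2+2\alpha_3+\alpha_4)$, so $(\wt M,\alpha_i) = 0$ for $i \in \{1,2,4\}$ but $(\wt M,\alpha_3) = 2 > 0$. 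Thus for $i=3$ one cannot conclude $\Lambda(M,L(3)) \geq 0$ and $\Lambda(L(3),M) \geq 0$ from general principles; a priori, say, $\Lambda(M,L(3)) = -2$ and $\Lambda(L(3),M) = 2$ is compatible with $\mathfrak{d}(M,L(3))=0$.

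The paper's proof is structured around exactly this distinction. It writes $\Lambda(M,L(i)) = -(\beta,\alpha_i) + 2(\beta,\alpha_i)_n - 2\,s_{M,L(i)}$ with $0 \leq s_{M,L(i)} \leq (\beta,\alpha_i)_n$, and for $i \in \{1,2,4\}$ the vanishing of $(\beta,\alpha_i)$ makes this visibly $\geq 0$, at which point your "sum is zero" observation (also used by the paper, via \cite[Lemma 3.2.3]{KKKO}) finishes those cases exactly as you say. But for $i=3$ the paper carries out an explicit $R$-matrix computation to find $s_{M,L(3)}=3$, which plugged into the formula gives $\Lambda(M,L(3)) = -2 + 8 - 6 = 0$, and only then deduces $\Lambda(L(3),M)=0$ from the sum being zero. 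Your proposal skips this computation by assuming a non-negativity that is not available, so the $i=3$ case is unproved. Everything else — the frozen-ness of $M$ via $j_+(\mathbf{i}_{nat})=N+1$, the commutativity with all simples coming from monoidal categorification, the Schur-lemma argument for $\mathfrak{d}=0$ — is correct and matches the paper.
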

  
   \begin{proof}
 We let $\beta := \wt(M) = 2 \left( \alpha_1 + \alpha_2 + 2 \alpha_3 + \alpha_4 \right)$.  It follows from the definition of $\Lambda(M,N)$ (see \cite{KKK,KKKO}), that if $N \in R(\gamma)-gmod$ with $ \gamma \in Q_{+}$ then 
   $$ \Lambda(M,N) = - (\beta , \gamma) + 2(\beta , \gamma)_{n}  - 2s_{M,N} $$
   where $s_{M,N}$ is the largest non-negative integer $s$ such that the image of $R_{M,N_z}$ is contained in $z^{s}(N_z \circ M)$. First consider $i \in \{1,2,4\}$. Then one has $(\alpha_i , \wt(M)) = 0$. Moreover, both $s_{L(i),M}$ and $s_{M,L(i)}$ are always smaller or equal to the number of occurrences of $i$ in $3423134213$ which is by definition $(\alpha_i , \wt(M))_{n}$. Hence one has 
   $$ \Lambda \left( M , L(i) \right) \geq 0 \quad \text{and} \quad  \Lambda \left( L(i) , M \right) \geq 0 . $$
 By {{\cite[Lemma 3.2.3]{KKKO}}}, the commuting of $M$ and $L(i)$ exactly means that the sum of these two quantities has to be zero. Hence one has $s_{L(i),M} = s_{M,L(i)} = 0$ and 
 $$ \Lambda \left( M , L(i) \right) =  \Lambda \left( L(i) , M \right) = 0 . $$
   For $i=3$ one can perform computations of $s_{M,L(i)}$ and $s_{L(i),M}$ in the following way (analogous to {{\cite[Section 6D]{Casbi}}}, see also {{\cite[Corollary 6.6]{Casbi}}} and {{\cite[Remark 6.7]{Casbi}}}). Recall that $M = \text{hd} \left( L(34) \circ L(23) \circ L(1342) \circ L(13) \right)$. Choose non zero vectors $v_{34}, v_{23}, v_{1342}, v_{13}$ respectively generating the four cuspidal modules involved in the brackets. Similarly choose $v_3$ a generating vector of $L(3)_z := \mathbb{C}[z] \otimes_{\mathbb{C}} L(3)$ on which the actions of the generators of the quiver Hecke algebras are given (for example) in {{\cite[Section 2.2]{KKKO}}}. The integer $s:=s_{M,L(3)}$ is given by the valuation of the polynomial 
   $$ (\tau_1(x_1-x_2)+1) \tau_2 \tau_3 (\tau_4(x_4-x_5)+1) \tau_5 (\tau_6(x_6-x_7)+1)\tau_7 \tau_8 \tau_9 (\tau_{10}(x_{10}-x_{11}) + 1) \cdot v $$
   where  $v := v_{34} \otimes v_{23} \otimes v_{1342} \otimes v_{13} \otimes v_3$.
  Looking at the characters of the modules  $L(34), L(23), L(1342), L(13)$ recalled above, we see that $\tau_9$ (resp. $\tau_5$, $\tau_3$) acts trivially on $v_{13}$ (resp. $v_{1342}$, $v_{23}$) because the weight $31$ (resp. $3142$, $32$) does not appear in the character of $L(13)$ (resp. $L(1342)$, $L(23)$). Hence the above quantity is equal to 
  \begin{align*}
  -z^3 (\tau_1(x_1-x_2)+1) \tau_2 \cdots \tau_{10} \cdot v 
 = ( z^4 \tau_1 \cdots \tau_{10}  - z^3 \tau_2 \cdots \tau_{10}) \cdot v .
  \end{align*}
  Therefore we have $s=3$ i.e. $\Lambda(M,L(3))=0$ and hence $\Lambda(L(3),M)=0$ as well by what precedes. 
   \end{proof}
   
   Writing the $q$-commutations relations with every $L(i)$ and looking at the coefficients in front of every weight in these relations, one can compute the entire graded character of $M$. 
  
   \begin{prop} \label{qcharfrozen}
   The graded character of the frozen variable $L(3423134213)$ is given by:
   \begin{itemize}
   \item The following weights appear with $q$-dimension $1$:
   \begin{align*}
& 3\{2,4\}313\{1,2,4\}3 \quad  3\{1,4\}323\{1,2,4\}3 \quad 3\{2,1\}343\{1,2,4\}3 \\ & \quad 3\{1,2,4\}313\{2,4\}3  \quad 3\{1,2,4\}323\{1,4\}3 \quad 3\{1,2,4\}343\{2,1\}3. 
 \end{align*} 
 \item The following weights appear with $q$-dimension $q+q^{-1}$:
  $$ 3\{2,4\}3113\{2,4\}3 \quad  3\{2,1\}3443\{2,1\}3 \quad  3\{1,4\}3223\{1,4\}3  \quad 3\{1,2,4\}33\{1,2,4\}3. $$
   \end{itemize}
\end{prop}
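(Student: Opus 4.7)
The plan is to recover $ch_q(M)$ from the $q$-commutation properties established in Lemma~\ref{lemcomut} together with the normalization provided by the dominant word, using the quantum shuffle product formula (Proposition~\ref{quantumshuffle}) as the main computational tool.

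First, Lemma~\ref{lemcomut} gives $M\circ L(i)\simeq L(i)\circ M$ as graded modules for every $i\in\{1,2,3,4\}$, since both grading shifts vanish. Taking graded characters and using Proposition~\ref{quantumshuffle} translates this into the four identities
\begin{equation*}
 ch_q(M)\circ i \;=\; i\circ ch_q(M)
\end{equation*}
in the quantum shuffle algebra, one for each simple root. Second, since $M$ is simple with $\max(M)=3423134213$, the weight space $e(3423134213)\cdot M$ is one-dimensional, so the coefficient of this word in $ch_q(M)$ equals $1$. The claim is that these data determine $ch_q(M)$ uniquely.

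To extract the character, I would proceed inductively from the dominant word downward in the lexicographic order on $\text{Seq}(\wt M)$. For each letter $i\in\{1,2,3,4\}$, expanding both sides of $ch_q(M)\circ i = i\circ ch_q(M)$ via the shuffle formula and comparing the coefficient of a chosen word $\mathbf{w}$ of weight $\wt M+\alpha_i$ yields a linear relation, with coefficients in $\mathbb{Z}[q^{\pm 1}]$, among the $d_{\mathbf{v}}(q)$ for those $\mathbf{v}\vdash\wt M$ obtained from $\mathbf{w}$ by deleting one letter $i$. Starting from $d_{3423134213}(q)=1$ and choosing $\mathbf{w}$'s appropriately, each such relation expresses a new coefficient in terms of already-known ones. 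A useful organizing principle is to stratify words by the positions of their four $3$'s: commuting with $i=3$ permutes the positions of the $3$'s, while commuting with $i\in\{1,2,4\}$ shuffles only the $\{1,2,4\}$-letters within the blocks cut out by the $3$'s, so the computation decouples into smaller local shuffle identities on these blocks.

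The main obstacle is bookkeeping the quantum signs $q^{-\epsilon_{\mathbf{j}''}}$ appearing in the shuffle formula and confirming that they combine to produce precisely the multiplicities $1$ and $q+q^{-1}$ claimed. The favorable structure of $\wt M=2\omega_3=2(\alpha_1+\alpha_2+2\alpha_3+\alpha_4)$ — in particular the orthogonality $\alpha_i\cdot\alpha_j=0$ for $i,j\in\{1,2,4\}$ — causes most cross-terms to collapse, so that at each step only a handful of shuffle summands contribute nontrivially. Once the procedure has been carried out, one checks that the resulting nonzero coefficients are supported precisely on the ten families of words displayed in the statement, with the stated $q$-multiplicities, which concludes the proof.
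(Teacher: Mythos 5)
Your overall strategy matches the paper's: use Lemma~\ref{lemcomut} to obtain $(i)\circ ch_q(M)=ch_q(M)\circ(i)$ for each $i$, normalize by the known one-dimensional top weight space, and read off coefficients via the quantum shuffle formula. However, you omit a piece of input that the paper relies on at almost every step, and without it your procedure does not close. Since $M=\text{hd}\left(L(34)\circ L(23)\circ L(1342)\circ L(13)\right)$ and the cuspidal characters are explicitly $(34),(23),(1342)+(1324),(13)$, the paper first observes that every weight of $M$ must occur in the quantum shuffle $(34)\shuffle(23)\shuffle(1342)\shuffle(13)$ or $(34)\shuffle(23)\shuffle(1324)\shuffle(13)$. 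This a priori restriction on the support is what allows each comparison of coefficients in $(i)\circ ch_q(M)=ch_q(M)\circ(i)$ to be a relation involving only one or two unknowns: the paper repeatedly writes \emph{``the word\dots can only come from the shuffle of $(i)$ with $\mathbf{j}$, as there is no shuffle of $34,23,1342,13$ beginning with\dots''} to kill off the other $\mathbf{v}$'s obtained from $\mathbf{w}$ by deleting a letter $i$. In your scheme, where you only assume commutation and the normalization of the top word, those extra candidate weights are not ruled out, so the relation you extract from a chosen $\mathbf{w}$ is generally a linear combination of several $d_{\mathbf{v}}$'s and does not by itself ``express a new coefficient in terms of already-known ones.''

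Relatedly, your claim that ``these data determine $ch_q(M)$ uniquely'' is asserted but not argued, and it is not obvious: the system of commutation relations is large and overdetermined, but proving that it pins down the 18900 coefficients given only the single normalization $d_{3423134213}=1$ (and the vanishing above it) would require a genuine rank argument. The paper sidesteps any such uniqueness claim by structuring the proof in two phases: first drastically cut the support (all weights start and end with $3$, have at least two letters between consecutive $3$'s, at most one of each of $1,2,4$ in a block, etc.), using both commutation \emph{and} the cuspidal support bound; only then does it solve for the finitely many surviving $q$-dimensions. To make your proposal rigorous you would need to (i) import the shuffle-support constraint from the cuspidal decomposition and the Kleshchev--Ram characters, and (ii) either reproduce the paper's support-narrowing argument or supply a separate uniqueness proof for your inductive scheme.
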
 

 In this statement, the notation $\{1,2,4\}$ means any of the six permutations of $1,2$ and $4$. Similarly $\{1,4\}$ means any of the words $14$ or $41$.

 \begin{proof}
 As above we set $M := L(3423134213)$. By definition, one has 
 $$ M = \text{hd} \left( L(34) \circ L(23) \circ L(1342) \circ L(13) \right) $$
 where $L(34), L(23), L(1342), L(13)$ are the cuspidal representations corresponding respectively to the positive roots $\alpha_3 + \alpha_4, \alpha_2 + \alpha_3, \alpha_1 + \alpha_2 + \alpha_3 + \alpha_4, \alpha_1 + \alpha_3$ for the natural ordering of the type $D_4$ Dynkin diagram. The graded characters of these four modules are known from Kleshchev-Ram {{\cite[Section 8.7]{KR}}}. They are respectively given by $(34),  (23), (1342) + (1324), (13)$ . 
 In particular the weights appearing in the graded character of $M$ are either of the form  $(34) \shuffle (23) \shuffle (1342) \shuffle (13)$ or of the form $(34) \shuffle (23) \shuffle (1324) \shuffle (13)$ (Recall the notation $\shuffle$ from Section~\ref{remindKR3}). We write 
 $$ ch_q(M) = \sum_{\mathbf{j}} P_{\mathbf{j}}(q) \mathbf{j} $$
 with $P_{\mathbf{j}}(q) \in \mathbb{Z}_{\geq 0}[q^{\pm 1}]$ for every $\mathbf{j}$. 
The graded character of the cuspidal module $L(i)$ is simply $(i)$ for every $i \in I$. Thus for each $i \in I$ the graded isomorphisms~\eqref{isocomut}  can be written in terms of quantum shuffle products of graded characters as
\begin{equation} \label{equalchar}
   (i) \circ ch_q(M) = ch_q(M) \circ (i) 
   \end{equation}
  as the degrees of each of the corresponding $R$-matrix is zero by the previous lemma. We now show how these four equations strongly constrain the weights of $ch_q(M)$ as well as their coefficients in $\mathbb{Z}_{\geq 0}[q^{\pm 1}]$. 
  
  \smallskip
  
  \textbf{All the weights of $M$ begin with $3$.}
  First, the first letter of a weight of $M$ is necessarily the first letter of one of the words $34,23,1342, 1324, 13$ and in particular it cannot be $4$. 
 If $\mathbf{j}$ is a weight of $M$ beginning with $2$, we write $\mathbf{j} = 2 . \mathbf{j}'$. Consider the Equation~\eqref{equalchar} for $i=2$. The word $2 . 2 . \mathbf{j}'$ appears in both hand sides and can only come from the shuffle of $(2)$ with $\mathbf{j}$ as there are no shuffle of $34,23,1342,13$ (or $34,23,1324,13$) beginning with $2,2$. Hence the quantum shuffle formula (see Proposition~\ref{quantumshuffle}) yields
 $$ (1 + q^{-2}) P_{2 . \mathbf{j}'} (q) =  (1 + q^{2}) P_{2 . \mathbf{j}'} (q) $$
 i.e.  $P_{\mathbf{j}}(q) =0$. 
 
   If $\mathbf{j}$ is a weight of $M$ beginning with $1$, we write $\mathbf{j} = 1 . \mathbf{j}'$. Let us first prove that $\mathbf{j}'$ cannot begin with $1$. Assume so and write $\mathbf{j}' = 1 . \mathbf{j}''$ and thus $\mathbf{j} = 1 . 1. \mathbf{j}''$. Consider the Equation~\eqref{equalchar} for $i=1$. The word $1 . 1 . 1 .  \mathbf{j}''$ appears in both hand sides and can only come from the shuffle of $(1)$ with $\mathbf{j}$ as there are no shuffle of $34,23,1342,13$ (or $34,23,1324,13$) beginning with $1,1,1$. Hence the quantum shuffle product formula yields
 $$ (1 + q^{-2} + q^{-4}) P_{1 . 1 . \mathbf{j}''} (q) =  (1 + q^{2} + q^{4}) P_{1 . 1 . \mathbf{j}''} (q) $$
 i.e.  $P_{\mathbf{j}}(q) =0$. This proves that $\mathbf{j}'$ does not begin with $1$, i.e. there are no weights of $M$ beginning with $1,1$. Hence applying the same argument as for $i=2$ yields $P_{\mathbf{j}}(q) = P_{1 . \mathbf{j}'}(q) =0$. 
 
  \smallskip
 
 \textbf{There are at least two letters between the first two occurrences of $3$.}
 Assume $\mathbf{j}$ is a weight of $M$ of the form $3,j,3 . \mathbf{j}'$ (with $j \neq 3$). Consider the Equation~\eqref{equalchar} for $i=3$. The word $3,j,3,3 . \mathbf{j}'$ appears in both hand sides and can only come from the shuffle of $(3)$ with $ 3,j,3 . \mathbf{j}' = \mathbf{j}$ as there are no shuffle of $34,23,1342,13$ (or $34,23,1324,13$) beginning with $3,j,3,3$ and we have  just proved that there are no weights of $M$ beginning with $j$. Hence the quantum shuffle product formula yields
 $$(q^{-1} + q^{-3}) P_{\mathbf{j}}(q) = (q + q^{-1}) P_{\mathbf{j}}(q) $$
 i.e. $P_{\mathbf{j}}(q) =0$. 
 
  \smallskip
 
 \textbf{There is at most one occurrence of each letter $1,2,4$ between the first two occurrences of $3$.}
  We begin by showing that if $\mathbf{j}$ is a weight of $M$ of the form $3 . \mathbf{k} . i,i . \mathbf{l}$ with $i \neq 3$ and  $\mathbf{k}$ not containing $3$ then $P_{\mathbf{j}}(q)=0$. Indeed consider the Equation~\eqref{equalchar} for $i$. The word $3 . \mathbf{k} . i,i,i . \mathbf{l}$ appears in both hand sides and can only come from the shuffle of $(i)$ with $3 . \mathbf{k} . i,i . \mathbf{l} =\mathbf{j}$. The quantum shuffle formula yields
 $$ (q + q^{-1} + q^{-3}) P_{\mathbf{j}}(q) = (q^3 + q + q^{-1}) P_{\mathbf{j}}(q)$$
 which proves $P_{\mathbf{j}}(q) = 0$ in this case. 
 
  Now assume $\mathbf{j}$ is a weight of $M$ of the form $3 . \mathbf{j}' . 3 . \mathbf{j}''$ where $\mathbf{j}'$ contains two occurrences of a letter $i \in \{1,2,4\}$ and does not contain $3$; as there are exactly two occurences of every letter $1,2,4$ and four occurrences of $3$ in any weight of $M$ (in particular in $\mathbf{j}$), it follows that $\mathbf{j}''$ does not contain any occurrence of this letter $i$ and contains two occurrences of $3$. Write $\mathbf{j}' = \mathbf{k}_1 . i . \mathbf{k}_2 . i \mathbf{k}_3$ with $\mathbf{k}_1, \mathbf{k}_2, \mathbf{k}_3$ containing neither $i$ nor $3$ and consider the Equation~\eqref{equalchar} for $i$. The word $3 . \mathbf{k}_1 . i .  \mathbf{k}_2 . i,i . \mathbf{k}_3 . 3 . \mathbf{j}''$ appears in both hand sides and can only come from the shuffle of $(i)$ with  $\mathbf{j}$ as we have just proved that there are no weights of $M$ of the form $3 . \mathbf{k}_1 . \mathbf{k}_2 . i,i .\mathbf{k}_3 . 3 .\mathbf{j}''$. Hence the quantum shuffle formula yields
  $$ (q^{-1} + q^{-3}) P_{\mathbf{j}}(q)  = (q^3 + q) P_{\mathbf{j}}(q) $$
  i.e. $P_{\mathbf{j}}(q) = 0$.
 
 This proves that the weights of $M$ begin either with $3 \{1,2\} 3$ or $3 \{1,4\} 3$ or $3 \{2,4\} 3$ or $3 \{1,2,4\} 3$. The exact same arguments can be applied in a symmetric way to successively prove that 
 \begin{enumerate}
 \item All the weights of $M$ end with $3$.
 \item There are at least two letters between the last two occurrences of $3$.
 \item There is at most one occurrence of each letter $1,2,4$ between the last two occurrences of $3$.
 \end{enumerate}
 
Therefore we can write any weight $\mathbf{j}$ of $M$ as 
$$ w = 3 . \mathbf{k}_1 . 3 . \mathbf{k}_2 . 3 . \mathbf{k}_3 . 3 $$
with $u_1$ and $u_3$ either of the form $\{1,2\}$ or $\{1,4\}$ or $\{2,4\}$ or $\{1,2,4\}$ and $u_2$ is a word containing at most two letters (necessarily $1,2$ or $4$). 

 \smallskip

 \textbf{If $\mathbf{k}_2$ contains a letter $i \in \{1,2,4\}$, then $\mathbf{k}_1$ and $\mathbf{k}_3$ are either of the form $\{1,2,4\}$ or $\{j,k\}$ with $j,k \neq i$.}
 Assume $\mathbf{k}_2$ contains $i$ and for example $\mathbf{k}_1$ of the form $\{i,j\}$ ($j \neq i$). Then $i$ does not appear in $\mathbf{k}_3$ as there are only two occurrences of $i$ in $\mathbf{j}$.  As $\mathbf{k}_2$ is of length at most $2$, $i$ is necessarily the first or the last letter of $\mathbf{k}_2$ (or both if $\mathbf{k}_2$ contains only $i$). Assume for example $\mathbf{k}_2$ begins with $i$ and consider the Equation~\eqref{equalchar} for $i$. The word $3 . \mathbf{k}_1 . 3 . i . \mathbf{k}_2 . 3 . \mathbf{k}_3 . 3$ appears in both hand sides and as there is no weight of $M$ with only one single letter between the first two occurrences of $3$, the quantum shuffle formula yields
 $$ (1 + q^{-2}) P_{\mathbf{j}}(q) = (q^2 + 1)P_{\mathbf{j}}(q) $$
and thus $P_{\mathbf{j}}(q) = 0$ which proves the desired statement. It follows in particular that $\mathbf{k}_2$ cannot be composed of two distinct letters (in that case, both $\mathbf{k}_1$ and $\mathbf{k}_3$ could be only of the form $\{1,2,4\}$ but then $\mathbf{j}$ would contain three occurrences of a letter $1,2$ or $4$). It is then straightforward that the only remaining possible weights are the ones listed in the statement of Proposition~\ref{qcharfrozen}. 
 
  \bigskip
  
 To finish the proof of Proposition~\ref{qcharfrozen}, it remains to compute the values of $P_{\mathbf{j}}(q)$ for every weight $\mathbf{j}$ in this list. The starting point is that by {{\cite[Theorem 7.2 (ii)]{KR}}}, we know that the weight space of $M$ corresponding to the highest weight has $q$-dimension $1$. From this we can deduce the $q$-dimensions of all the other weight spaces of $M$. 
 
  \smallskip
 
  \textbf{For every distinct $i,j \in \{1,2,4\}$, one has $P_{3,i,j,3 . \mathbf{j}'}(q) = P_{3,j,i,3 . \mathbf{j}'}(q)$ for any $\mathbf{j}'$.}
  By what precedes, there is no weight of $M$ of the form $3,i,j,i,3 . \mathbf{j}'$. Hence considering the Equation~\eqref{equalchar} for $i$, the word $3,i,j,i,3 . \mathbf{j}'$ appears in both hand sides only as a shuffle of $i$ with $3,i,j,3 . \mathbf{j}'$ or $3,j,i,3 . \mathbf{j}'$. We get
  $$ q^{-1} P_{3,i,j,3 . \mathbf{j}'}(q) + q P_{3,j,i,3 . \mathbf{j}'} = q P_{3,i,j,3 . \mathbf{j}'}(q) + q^{-1} P_{3,j,i,3 . \mathbf{j}'}(q) $$
  which is the desired statement. 
  Using similar arguments one can also show that for every permutation $\sigma$ of the set $\{1,2,4\}$ one has 
  $$ P_{3,1,2,4,3 . \mathbf{j}'}(q) = P_{3, \sigma(1), \sigma(2), \sigma(4), 3 . \mathbf{j}'}(q) $$
  for any $\mathbf{j}'$. The symmetric statements are also valid i.e. 
   
    \begin{enumerate}
 \item For every distinct $i,j \in \{1,2,4\}$, one has $P_{ \mathbf{j}'' . 3,i,j,3}(q) = P_{ \mathbf{j}'' . 3,j,i,3}(q)$ for any $\mathbf{j}''$.
 \item For every permutation $\sigma$ of $\{1,2,4\}$ one has $ P_{ \mathbf{j}'' . 3,1,2,4,3}(q) = P_{ \mathbf{j}'' . 3, \sigma(1), \sigma(2), \sigma(4), 3}(q) $ for any $\mathbf{j}''$.   
 \end{enumerate}
 
  In particular applying this with the highest weight $3423134213$ we get that  all the weights of the form $3\{2,4\}313\{1,2,4\}3$ are of $q$-dimension $1$. Then  one can apply Equation~\eqref{equalchar} for $1$ and look for words of the form  $3\{2,4\}3113\{1,2,4\}3$. The quantum shuffle formula yields
 \begin{align*}
   & (q^2 + 1) P_{3\{2,4\}313\{1,2,4\}3}(q) + q^{-1} P_{3\{2,4\}3113\{2,4\}3}(q)  \\
   & \qquad \qquad \qquad = (1 + q^{-2}) P_{3\{2,4\}313\{1,2,4\}3}(q) + q P_{3\{2,4\}3113\{2,4\}3}(q) . 
     \end{align*}
   Knowing that $P_{3\{2,4\}313\{1,2,4\}3}(q) = 1$, this implies 
   $$ P_{3\{2,4\}3113\{2,4\}3}(q) = q + q^{-1} . $$
  Then looking for words of the form $3\{1,2,4\}3113\{2,4\}3$ in Equation~\eqref{equalchar} for $1$ and applying the quantum shuffle formula, we get
  $$ P_{3\{1,2,4\}313\{2,4\}3}(q) = 1 . $$
  Then considering Equation~\eqref{equalchar} for $3$, one can look for words of the form $3 \{ 1,2,4\}3313 \{2,4\}3$ and we obtain 
  $$ P_{ 3 \{ 1,2,4\}331 \{2,4\}3} = q + q^{-1} . $$ 
 The nodes $1,2$ and $4$ play symmetric roles, one can then use Equation~\eqref{equalchar} for $2$ and then $4$ to deduce the $q$-dimensions of the remaining weight spaces listed in the statement of Proposition~\ref{qcharfrozen}. This finishes the proof. 
  \end{proof}

 \subsection{Computations on equivariant multiplicities}
 
 We can now finish the proof of Theorem~\ref{thminitcond} in type $D_4$. We let $x_1, \ldots , x_{12}$ denote the  flag minors of the seed $\s^{\mathbf{i}_{nat}}$ and $M_1, \ldots , M_{12}$ the corresponding determinantal modules in $R-gmod$. The dominant words $\mu_1, \ldots , \mu_{12}$ associated to $M_1, \ldots , M_{12}$ were listed at the beginning of this Section. Using Theorem~\ref{Stembridge}, it is straightforward to check that the words $\mu_i , i \notin \{5,8,11\}$ are reduced expressions of dominant minuscule elements of $W$. Therefore the corresponding modules are strongly homogeneous and Proposition~\ref{propbarD} implies that $\overline{D}(x_i)$ is of the form $1/P_i$ if $i \notin \{5,8,11\}$. Thus it remains to check that it is also the case for $x_5, x_8$ and $x_{11}$.

 Evaluating at $q=1$ the graded dimensions given in  Proposition~\ref{qcharfrozen}, we can use Equation~\eqref{Dbar} to deduce the equivariant multiplicity of $[M]=x_{11}$. This is done using the formal calculation software SAGE.  
 
 \begin{cor} \label{Dbarfrozen}
 The equivariant multiplicity of the frozen variable $x_{11}=[L(3423134213)] \in \mathbb{C}[N]$ is given by:
  $$ \frac{1}{ \left( \prod_{J \subset \{1,2,4\}} \left( \alpha_3 + \sum_{j \in J} \alpha_j \right) \right) (\alpha_1 + \alpha_2 + 2 \alpha_3 + \alpha_4)^{2}} . $$ 
  \end{cor}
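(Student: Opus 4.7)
The plan is to apply the defining formula~\eqref{Dbar} for $\overline{D}$ directly to $M = L(3423134213)$, using as input the graded character computed in Proposition~\ref{qcharfrozen}. First, I specialize $q=1$ in the character data to extract the ordinary dimensions of the weight subspaces: every weight word listed in the first family of Proposition~\ref{qcharfrozen} contributes a weight space of dimension $1$, and every weight word in the second family contributes a weight space of dimension $(q+q^{-1})|_{q=1} = 2$. Substituting into~\eqref{Dbar} then expresses $\overline{D}([M])$ as an explicit finite sum of rational functions $\overline{D}_{\mathbf{j}}$ indexed by the enumerated weights.

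Next, I organize this sum by grouping weights according to the positions of their four $3$'s, since these are common to every weight of $M$ and determine the global shape of the partial sums appearing in the denominators of $\overline{D}_{\mathbf{j}}$. Within each group the inner blocks range over permutations of subsets of $\{1,2,4\}$. Some a~priori simplification is available via the classical shuffle identity
$$ \sum_{\sigma \in \mathfrak{S}_n} \frac{1}{\alpha_{\sigma(1)}(\alpha_{\sigma(1)}+\alpha_{\sigma(2)}) \cdots (\alpha_{\sigma(1)}+\cdots+\alpha_{\sigma(n)})} = \frac{1}{\alpha_1 \alpha_2 \cdots \alpha_n}, $$
which is precisely the mechanism underlying Nakada's colored hook formula recalled in Theorem~\ref{HookNakada}, and which can be used to collapse the six permutations of a single $\{1,2,4\}$-block (or the two permutations of a $\{i,j\}$-block) at the cost of absorbing the constant shift coming from the letters already accumulated on the left.

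To conclude, it remains to verify that the simplified sum coincides with the claimed product. Both sides are rational functions in the four indeterminates $\alpha_1, \alpha_2, \alpha_3, \alpha_4$ of total degree $-10$ (matching the height $\hgt(\wt(M)) = 10$), so the identity reduces, after clearing denominators, to a polynomial identity which can be checked coefficient by coefficient. The main obstacle is the sheer size of the computation: each of the ten weight patterns in Proposition~\ref{qcharfrozen} contributes on the order of several dozen summands $\overline{D}_{\mathbf{j}}$ once the inner permutations are expanded, and although the shuffle identity above trims the count, the partial sums $\alpha_{j_1}+\cdots+\alpha_{j_k}$ carry offsets depending on the preceding $3$'s which prevent a fully closed-form hand simplification. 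For this reason the final comparison is most cleanly carried out symbolically in a computer algebra system such as SAGE, which verifies the required equality of rational functions.
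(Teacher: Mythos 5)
Your proposal takes essentially the same route as the paper: both proceed by specializing the graded character of Proposition~\ref{qcharfrozen} at $q=1$, substituting the resulting weight-space dimensions into the defining formula~\eqref{Dbar}, and carrying out the final rational-function verification symbolically in SAGE. Your extra remarks about grouping by the positions of the $3$'s and invoking the shuffle identity are reasonable heuristics, but (as you yourself note) the offsets coming from the accumulated $3$'s prevent a clean closed-form collapse, so they do not change the essential structure of the argument.
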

  
  Using similar arguments, one can also compute the equivariant multiplicities of the two remaining flag minors whose corresponding determinantal modules are not homogeneous, namely $x_5=[L(134213)]$ and  $x_8=[L(23134213)]$.  
  
  \begin{lem} \label{Dbarunfrozen}
  The equivariant multiplicity of $x_5=[L(134213)] \in \mathbb{C}[N]$ is given by:
$$  \frac{1}{{\alpha_1}^{2}(\alpha_1+\alpha_3)(\alpha_1+\alpha_2+\alpha_3)(\alpha_1+\alpha_4+\alpha_3)(\alpha_1+\alpha_2+\alpha_3+\alpha_4)} . $$
The equivariant multiplicity of $x_8=[L(23134213)] \in \mathbb{C}[N]$ is given by:
$$  \frac{1}{\alpha_1 \alpha_2 (\alpha_1+\alpha_3)(\alpha_2+\alpha_3)(\alpha_1+\alpha_2+\alpha_3)^{2}(\alpha_1+\alpha_2+\alpha_3+\alpha_4)(\alpha_1+\alpha_2+ 2 \alpha_3+\alpha_4)} . $$
\end{lem}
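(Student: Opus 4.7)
The plan is to mimic the analysis carried out for the frozen variable in Proposition~\ref{qcharfrozen} and Corollary~\ref{Dbarfrozen}: determine the full graded characters of the simple modules $L(134213)$ and $L(23134213)$, specialize the weight–space dimensions at $q=1$, and plug the result into Equation~\eqref{Dbar}. The key structural input is that $x_5$ and $x_8$ are cluster variables of the standard seed $\s^{\mathbf{i}_{nat}}$, so by Kang-Kashiwara-Kim-Oh's monoidal categorification \cite{KKKO} the modules $L(134213)$ and $L(23134213)$ $q$-commute with every other cluster variable in that seed, and in particular with $L(1) = M_1$.

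First I would pin down the relevant $q$-commutation relations: for every $i \in I$ I want to compute the integers $\Lambda(L(134213), L(i))$ and $\Lambda(L(23134213), L(i))$. For $i=1$ this follows immediately from the commutation with $M_1$ together with the formula $\Lambda(M,N) = -(\beta,\gamma) + 2(\beta,\gamma)_{n} - 2 s_{M,N}$ exactly as in Lemma~\ref{lemcomut}. For $i \in \{2,3,4\}$ one combines the same KKKO formula with an explicit computation of the action of the relevant sequence of $\tau$'s and $x$'s on highest–weight vectors, following the method used at the end of the proof of Lemma~\ref{lemcomut}; alternatively one exploits the cuspidal factorizations $L(134213) = \text{hd}(L(1342)\circ L(13))$ and $L(23134213) = \text{hd}(L(23)\circ L(1342)\circ L(13))$, together with the explicit graded characters of the cuspidal modules $L(13)$, $L(1342)$, $L(23)$ in type $D_4$ recalled in \cite[Section 8.7]{KR}.

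Next I would inject these $q$-commutation relations into the quantum shuffle formula (Proposition~\ref{quantumshuffle}) applied to the identities $(i) \circ ch_q(M) = q^{\Lambda(L(i),M)} \, ch_q(M) \circ (i)$. Writing $ch_q(M) = \sum_{\mathbf{j}} P_{\mathbf{j}}(q) \mathbf{j}$, and starting from the highest weights $134213$ and $23134213$, whose $q$-dimension equals $1$ by \cite[Theorem 7.2(ii)]{KR}, the same pattern of arguments used in the proof of Proposition~\ref{qcharfrozen} — vanishing of $P_{\mathbf{j}}(q)$ when $\mathbf{j}$ begins with a forbidden letter, symmetry identities exchanging consecutive entries that span disjoint nodes of the Dynkin diagram, forced equalities between neighboring coefficients — propagates and pins down all admissible weights together with their $q$-dimensions. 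Specialising $q=1$ and feeding the resulting ungraded characters into Equation~\eqref{Dbar} then produces $\overline{D}(x_5)$ and $\overline{D}(x_8)$ as explicit sums of rational functions in $\alpha_1,\dots,\alpha_4$, whose simplification (best handled, as in Corollary~\ref{Dbarfrozen}, with the help of a formal computation software such as SAGE) yields the two product formulas stated in the Lemma.

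The main obstacle will be the second step of the first paragraph: unlike the frozen case treated in Proposition~\ref{qcharfrozen}, commutation with every cuspidal $L(i)$ is not automatic for unfrozen cluster variables, so the shuffle identities obtained from $L(i)$, $i \in \{2,3,4\}$, may a priori leave part of the graded character undetermined. Closing the system will most likely require extra geometric input — either via the cuspidal factorizations above and a careful analysis of the corresponding $R$-matrix, or by using the already-known character of $L(3423134213)$ from Proposition~\ref{qcharfrozen} together with the composition series of the monoidal products $L(34)\circ L(23134213)$ and $L(23)\circ L(134213)$ to propagate information downwards from $M_{11}$ to $M_8$ and $M_5$.
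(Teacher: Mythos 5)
Your proposal takes essentially the same route as the paper, which only states that the two computations are done ``[u]sing similar arguments'' to Proposition~\ref{qcharfrozen} and Corollary~\ref{Dbarfrozen}: determine the graded characters of $L(134213)$ and $L(23134213)$ via $q$-commutation and quantum shuffle identities, evaluate at $q=1$, and feed the weight multiplicities into Equation~\eqref{Dbar}. Your caveat about the $q$-commutation with the cuspidals $L(i)$ is a genuine point the paper leaves implicit: since $M_5$ and $M_8$ are unfrozen, the simple argument of Lemma~\ref{lemcomut} forcing $\Lambda(M,L(i))=\Lambda(L(i),M)=0$ (which used that a frozen variable $q$-commutes with every simple object) is not automatically available for $i\in\{2,3,4\}$, so supplementing the shuffle identities with direct renormalized $R$-matrix computations on the cuspidal factorizations $\text{hd}(L(1342)\circ L(13))$ and $\text{hd}(L(23)\circ L(1342)\circ L(13))$, as you suggest, is indeed what is needed to close the system.
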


\smallskip

 This proves that the standard seed $\s^{\mathbf{i}_{nat}}$ satisfies the Property (A). The polynomials $P_j , 1 \leq j \leq 12$ are given by:
 $$
  \begin{gathered}
 P_1 = \alpha_1 \qquad P_2 = \alpha_1(\alpha_1 + \alpha_3) \\
  P_3 = \alpha_1(\alpha_1 + \alpha_3)(\alpha_1 + \alpha_2 + \alpha_3)  \qquad P_4 = \alpha_1(\alpha_1 + \alpha_3)(\alpha_1 + \alpha_3 +  \alpha_4) \\  
 P_5 = {\alpha_1}^{2}(\alpha_1+\alpha_3)(\alpha_1+\alpha_2+\alpha_3)(\alpha_1+\alpha_4+\alpha_3)(\alpha_1+\alpha_2+\alpha_3+\alpha_4) \\
 P_6 = \alpha_1(\alpha_1 + \alpha_3)(\alpha_1 + \alpha_3 +  \alpha_4)(\alpha_1 + \alpha_2 + \alpha_3)(\alpha_1+\alpha_2+\alpha_3+\alpha_4)(\alpha_1+\alpha_2+ 2 \alpha_3+\alpha_4) \\
 P_7 = \alpha_2 \alpha_1 (\alpha_1+\alpha_2+\alpha_3)(\alpha_1+\alpha_2+\alpha_3+\alpha_4) \\
 P_8 = \alpha_1 \alpha_2 (\alpha_1+\alpha_3)(\alpha_2+\alpha_3)(\alpha_1+\alpha_2+\alpha_3)^{2}(\alpha_1+\alpha_2+\alpha_3+\alpha_4)(\alpha_1+\alpha_2+ 2 \alpha_3+\alpha_4) \\
 P_9 = \alpha_2(\alpha_2+\alpha_3)(\alpha_2+\alpha_3+\alpha_4)(\alpha_1+\alpha_2+\alpha_3)(\alpha_1+\alpha_2+\alpha_3+\alpha_4)(\alpha_1+\alpha_2+ 2 \alpha_3+\alpha_4) \\
 P_{10} = \alpha_3(\alpha_2+\alpha_3)(\alpha_1+\alpha_3)(\alpha_1+\alpha_2+\alpha_3)(\alpha_1+\alpha_2+ 2 \alpha_3+\alpha_4) \\
 P_{11} = \left( \prod_{J \subset \{1,2,4\}} \left( \alpha_3 + \sum_{j \in J} \alpha_j \right) \right) (\alpha_1 + \alpha_2 + 2 \alpha_3 + \alpha_4)^{2} \\
 P_{12} =  \alpha_4(\alpha_3+\alpha_4)(\alpha_2+\alpha_3+\alpha_4)(\alpha_1+\alpha_3+\alpha_4)(\alpha_1+\alpha_2+\alpha_3+\alpha_4)(\alpha_1+\alpha_2+ 2 \alpha_3+\alpha_4) . 
 \end{gathered}
 $$
Let us now write down the equalities required by the Property (B). For example, let us detail the cases of  $j=6$ and $j=8$. 
 
 The positive root $\beta_6$ is $\alpha_1+\alpha_2+ 2 \alpha_3+\alpha_4$. Moreover, $i_6= 1$, $6_{-}=1$ and $6_{+}=N+1=13$ as there is no occurrence of the letter $1$ after the position $6$. The node $1$ is monovalent in the Dynkin diagram of type $D_4$ and its only neighbour is $3$. The last occurrence of $3$ in $\mathbf{i}_{nat}$ before the position $6$ is in position $5$.
 Thus Property (B) can be written as
 $$ P_6 P_1 = (\alpha_1+\alpha_2+ 2 \alpha_3+\alpha_4) P_5 . $$
 The positive root $\beta_8$ is $\alpha_2+ \alpha_3$. Moreover, $i_8= 3$, $8_{-}=5$ and $8_{+}=11$. The node $3$ is the trivalent node. The last occurrences before the position $8$ of each of its neighbours are thus $3$ (for the node $2$), $6$ (for the node $1$) and $7$ (for the node $4$). 
 Hence the Property (B) can be written as
 $$ P_8 P_5 = (\alpha_2+ \alpha_3) P_2 P_6 P_7 . $$
The other equalities can be obtained in the same way and are listed below:
$$  
\begin{gathered}
  P_1 = \alpha_1 \qquad P_2 = (\alpha_1 + \alpha_3)P_1 \qquad P_3 = (\alpha_1+\alpha_2+\alpha_3)P_2 \\
   P_4 = (\alpha_1 + \alpha_3 +  \alpha_4) P_2  \qquad P_5 P_2 = (\alpha_1+\alpha_2+\alpha_3+\alpha_4) P_1 P_3 P_4 \\
  P_6 P_1 = (\alpha_1+\alpha_2+ 2 \alpha_3+\alpha_4) P_5  \qquad P_7 P_4  = \alpha_2 P_5 \\
   P_8 P_5 = (\alpha_2+ \alpha_3) P_2 P_6 P_7  \qquad P_9 P_3 = (\alpha_2 + \alpha_3 +  \alpha_4) P_8 \\
P_{10} P_7 = \alpha_3 P_8 \qquad P_{11} P_8 = (\alpha_3 + \alpha_4) P_6 P_9 P_{10} \qquad P_{12} P_{10} = \alpha_4 P_{11}
   \end{gathered}
   $$
  These equalities are  straightforward to check by hand using the explicit values of the $P_j , 1 \leq j \leq 12$.

  \bigskip
 
Unlike the case where $\mathfrak{g}$ is of type $A_n$, the Property (C) is here non trivial a priori. For instance, the positive root $\alpha_1$ appears with multiplicity $2$ in $P_5$. The required inequality is thus guaranteed by the fact that $\alpha_1$ also divides $P_{5_{+}}=P_8$ (with multiplicity $1$). Similarly, one has $(\alpha_1+\alpha_2+\alpha_3 ; P_8) = 2$ and  the inequality (C) follows from the fact that $(\alpha_1+\alpha_2+\alpha_3 ; P_{8_{+}}) = (\alpha_1+\alpha_2+\alpha_3 ; P_{11}) =  1$. 
 
 \bigskip

 Finally, we have proven that the standard seed $\s^{\mathbf{i}_{nat}}$ satisfies the  Properties (A), (B) and (C).

  \section{Cluster theory of homogeneous modules}
  \label{conclu}
   
In this conclusive section, we discuss various evidences of the connections between the determinantal modules categorifying the flag minors of $\mathbb{C}[N]$ and the (prime) strongly homogeneous modules of $R-gmod$ in the sense of Kleshchev-Ram \cite{KRhom}. This leads us to propose a conjectural criterion of primeness of the homogeneous module $S(w)$ ($w \in W$).

   Recall from Section~\ref{fullcom} the subsets $\mathcal{FC}, \mathcal{M}in, \mathcal{M}in^{+}$ of $W$. We will consider certain subsets of these sets, by intersecting them with the set of \textit{strict} elements of $W$ defined as follows. 
   
    \begin{deftn} \label{defstrict}
    An element $w \in W$ is said to be \textit{strict} if for every reduced expression $\mathbf{j} = (j_1, \ldots , j_l) \in \text{Red}(w)$, one has 
    $$ \forall 1 \leq r < l, \exists 1 \leq p \leq r < q \leq l, j_p \cdot j_q \neq 0 . $$
   \end{deftn}
   
In other words, there is no gap in any reduced expression of $w$. We let $W_{0}$ denote the set of all strict elements of $W$ and we set 
   $$ \mathcal{FC}_{0} := W_{0} \cap \mathcal{FC} \quad , \quad  \mathcal{M}in_{0} := W_{0} \cap \mathcal{M}in \quad , \quad  \mathcal{M}in_{0}^{+} := W_{0} \cap  \mathcal{M}in^{+}. $$
   As we saw in Example~\ref{exampleA3}, the determinantal modules whose classes in $\mathbb{C}[N]$ are flag minors coincide with the prime strongly homogeneous when $\mathfrak{g} = \mathfrak{sl}_4$. We also noticed that the list of Weyl group elements parametrizing these modules is exactly $\mathcal{M}in^{+} \setminus \{s_3 s_1 \}$. Thus it is immediate that this list is in fact the list of elements of $\mathcal{M}in_{0}^{+}$. The same observation can be checked for $\mathfrak{g}= \mathfrak{sl}_5$. 
   We propose the following:
  
    \begin{conj} \label{conjA}
   Let $\mathfrak{g}$ be of type $A_n$. The set of cluster variables of the seeds $\mathcal{S}^{{\bf i}_{<}}$ ($<$ running over all the possible orderings on $I$) is exactly the set of isomorphism classes of the strongly homogeneous modules $S(w) , w \in \mathcal{M}in_{0}^{+}$. 
 \end{conj}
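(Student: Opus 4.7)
The plan is to establish the two inclusions separately by exploiting the explicit description of determinantal modules given by Theorem~\ref{thmcasbi2}, together with Stembridge's combinatorial characterization of dominant minuscule elements (Theorem~\ref{Stembridge}).

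For the easier inclusion, I would first use Theorem~\ref{thmcasbi2} to write each cluster variable of $\mathcal{S}^{\mathbf{i}_{<}}$ as $[L(\mu_k)]$, where the dominant word $\mu_k$ is a concatenation of \emph{distinct} good Lyndon words of the form $(i, i+1, \ldots, j)$ (this uses the fact that in type $A_n$ all exponents $c_j$ lie in $\{0,1\}$). The key point is that such a concatenation of interval good Lyndon words naturally realizes a reduced expression of some element $w_k \in W$, whose support is a connected sub-diagram so that $w_k$ satisfies the strictness condition. Using the specific shape of $\mu_k$ prescribed by Theorem~\ref{thmcasbi2}, I would check directly via Stembridge's criterion that $w_k$ is dominant minuscule; this is essentially a generalization of the verification already carried out in Lemma~\ref{Dbarinit} for the natural ordering, where $w_k$ was identified with $w[k,r]$. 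By Theorem~\ref{thmKRhom}, this gives $[L(\mu_k)] = [S(w_k)]$ with $w_k \in \mathcal{M}in^{+}_{0}$.

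For the reverse inclusion one must show that every strict dominant minuscule element $w \in \mathcal{M}in^{+}_{0}$ has $[S(w)]$ appearing as a cluster variable in some $\mathcal{S}^{\mathbf{i}_{<}}$. Here I would invoke the classical combinatorial description (in type $A$) of dominant minuscule elements of $\mathfrak{S}_{n+1}$ via Young diagrams fitting in a rectangle, with reduced expressions corresponding to standard fillings. The strictness condition should translate into a connectedness property of the diagram (equivalently, of the support of $w$). The task is then to read off from the shape of the diagram an ordering $<$ on $I$ and a position $k$ such that the dominant word $\mu_k$ output by Theorem~\ref{thmcasbi2} applied to $\mathbf{i}_{<}$ coincides (up to the commutations allowed in $\mathrm{Red}(w)$) with the good Lyndon factorization of a reduced expression of $w$. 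The staircase structure of dominant minuscule Young diagrams is what makes this matching possible.

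I expect this reverse inclusion to be the main obstacle: matching an arbitrary strict dominant minuscule Young diagram to the combinatorial pattern imposed by Theorem~\ref{thmcasbi2}. Since a single ordering $<$ cannot capture all of $\mathcal{M}in^{+}_{0}$ (as already visible in Example~\ref{exampleA3}, where five distinct orderings are needed), one needs a systematic procedure for choosing $<$ and $k$ that depends on $w$. The cleanest route is likely an induction on $\ell(w)$, with base cases the cuspidal modules $S(w)$ for $w = s_i s_{i+1} \cdots s_j$ (which are flag minors for any compatible ordering), and with the inductive step achieved by analyzing how good Lyndon factorizations behave under transpositions of adjacent letters in the ordering on $I$, each such transposition corresponding in $\mathbb{C}[N]$ to passing between two adjacent standard seeds.
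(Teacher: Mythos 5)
This statement is labeled as a \textbf{Conjecture} in the paper, not a theorem: the paper offers no proof, only numerical verification in type $A_3$ (Example~\ref{exampleA3}) and a claimed check in type $A_4$, so there is no internal argument to compare your proposal against. You are attempting an open problem, and the proposal must therefore be assessed on its own merits.

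The forward inclusion you sketch is in the spirit of Lemma~\ref{Dbarinit}, but for a non-natural ordering $<$ the good Lyndon words are \emph{not} of the form $(i,i+1,\ldots,j)$ --- only their weights are intervals of simple roots --- so the identification of $\mu_k$ with a reduced expression of some $w_k$ and the verification of Stembridge's criterion (Theorem~\ref{Stembridge}) would have to be carried out over the ordering-dependent Lyndon words produced by Leclerc's algorithm, which you have not done. Moreover, even granting that $\mu_k$ is \emph{some} reduced word of $w_k$, the identification $[L(\mu_k)]=[S(w_k)]$ also requires showing that $\mu_k$ is the $<$-maximal element of $\text{Red}(w_k)$, since $\max(L(\mu))=\mu$ while $\max(S(w))$ picks out the largest reduced word of $w$ by Theorem~\ref{thmKRhom}. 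The reverse inclusion --- producing, for each $w\in\mathcal{M}in_{0}^{+}$, an ordering $<$ and an index $k$ realizing $[S(w)]$ as a flag minor of $\mathcal{S}^{\mathbf{i}_{<}}$ --- is left entirely unproven, as you yourself acknowledge; the proposed induction on $\ell(w)$ hinges on controlling how Lyndon factorizations transform under adjacent transpositions of $<$, which is precisely the combinatorial content that would need to be established. As written this is an outline of a program, not a proof, and the conjecture remains open.
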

  
   As we saw in Section~\ref{initD4}, the determinantal modules corresponding to flag minors are not necessarily homogeneous when $\mathfrak{g}$ is of type $D_4$. However, it is not hard in this case to list the strict dominant minuscule elements of $W$. We obtain the following:
  $$
    \begin{gathered}
    s_1 \quad s_2 \quad s_3 \quad s_4 \\
    s_1s_3 \quad s_3s_1 \quad s_2s_3 \quad s_3s_2 \quad s_4s_3 \quad s_3s_4 \\    
    s_1s_2s_3 \quad s_1s_3s_2 \quad s_4s_2s_3 \quad s_4s_3s_2 \quad s_2s_3s_1 \quad s_2s_3s_4 \quad s_1s_3s_4 \quad s_4s_3s_1 \quad s_1s_4s_3 \\
     s_3s_1s_2s_3 \quad  s_3s_1s_4s_3 \quad  s_3s_2s_2s_3 \quad s_4s_2s_1s_3 \quad s_4s_2s_3s_1 \quad s_1s_2s_3s_4 \quad  s_1s_4s_3s_2 \\
     s_3s_1s_2s_3s_4 \quad s_3s_1s_4s_3s_2 \quad s_3s_2s_4s_3s_1  \\
      s_4s_3s_1s_2s_3s_4  \quad s_2s_3s_1s_4s_3s_2 \quad  s_1s_3s_2s_4s_3s_1 .
    \end{gathered}
    $$
  
  Using Theorem~\ref{thmcasbi2}, one can compute the determinantal modules corresponding to the cluster variables of the seed $\s^{\mathbf{i}}$,  $\mathbf{i}$ coming from a total ordering on $I$. Unlike the type $A_n$ case though, this is not sufficient to get all the homogeneous modules $S(w)$, $w \in \mathcal{M}in_{0}^{+}$. Nonetheless, it is not hard to compute the determinantal  modules of the remaining standard seeds by performing certain well-chosen mutations as in Section~\ref{propag}. Then one can observe that for $w$ in the list above, the isomorphism class of the homogeneous module $S(w)$ is always a flag minor. We propose the following conjecture:
  
  \begin{conj} \label{conjhomog}
  Let $\mathfrak{g}$ be a simple Lie algebra of (finite) simply-laced type. Then for any $w \in \mathcal{M}in_{0}^{+}$, the class of $S(w)$ is a flag minor in $\mathbb{C}[N]$. 
  \end{conj}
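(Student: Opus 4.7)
The plan is to exhibit, for each $w \in \mathcal{M}in_0^+$, a specific flag minor of the form $D(w\omega_{i(w)}, \omega_{i(w)})$ coinciding with $[S(w)]$. First I would associate to $w$ a distinguished vertex $i(w) \in I$: Stembridge's Theorem~\ref{Stembridge} provides a dominant weight $\lambda \in P^+$ controlling the reduced expressions of $w$, and I expect the strictness condition to force $\lambda$ to be proportional to a single fundamental weight $\omega_{i(w)}$, which can be characterized as a common last letter of a canonical subfamily of $\mathrm{Red}(w)$. Without strictness, $w$ could decompose as $w_1 w_2$ with disjoint Dynkin supports, yielding $S(w) \simeq S(w_1) \circ S(w_2)$ as a factorization in $R-gmod$, so that $[S(w)]$ would only be a product of several flag minors rather than a single one. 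This motivates why strictness is exactly the right assumption.

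Next, I would pick $\mathbf{j} = (j_1, \ldots, j_{l(w)}) \in \mathrm{Red}(w)$ ending in $j_{l(w)} = i(w)$ (which exists by the descent characterization of $i(w)$) and extend it to a reduced expression $\mathbf{i} = (i_1, \ldots, i_N) \in \mathrm{Red}(w_0)$. By definition of the standard seed $\s^{\mathbf{i}}$, its cluster variable at position $l(w)$ is precisely $D(w\omega_{i(w)}, \omega_{i(w)})$, hence a flag minor. It then remains to prove the identification $[S(w)] = D(w\omega_{i(w)}, \omega_{i(w)})$ in $\mathbb{C}[N]$. Both sides are elements of the dual canonical basis, so by Theorem~\ref{secondthmKLR} and Kleshchev-Ram's parametrization it suffices to verify that the determinantal module $M_{l(w)}^{\mathbf{i}}$ and $S(w)$ share the same dominant word, namely the lex-maximum reduced expression of $w$. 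My approach would be to combine the inductive crystal construction of determinantal modules from \cite[Proposition~10.2.4]{KKKO} with the multiplicity-free character of $S(w)$ given by Theorem~\ref{thmKRhom}: one propagates the dominant word along $\mathbf{j}$ via the categorified crystal operators, and the dominant minuscule property together with the rigidity of the multiplicity-free weight decomposition of $S(w)$ should ensure that the output coincides with the Kleshchev-Ram dominant word of $S(w)$.

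The main obstacle will be this last identification in general simply-laced type. In type $A_n$, the explicit dominant word description of Theorem~\ref{thmcasbi2} combined with the mutation propagation of Section~\ref{propag} makes Conjecture~\ref{conjA} reachable by a direct combinatorial argument on minuscule heaps, and type $D_4$ admits a finite case-by-case verification along the lines of Section~\ref{initD4}. For types $D_n$ with $n \geq 5$ and $E_6, E_7, E_8$, however, determinantal modules appearing in a general $\s^{\mathbf{i}}$ need not be homogeneous (already witnessed in $D_4$ by the non-homogeneous frozen $L(3423134213)$), so a genuinely new input seems necessary: presumably a direct verification via Kashiwara-Kim's results~\cite{KK} that the extremal crystal vector of $M_{l(w)}^{\mathbf{i}}$ lives in the $e(\mu)$-component prescribed by the Kleshchev-Ram dominant word of $S(w)$, with strictness used to rule out the factorizations $S(w_1) \circ S(w_2)$ that would otherwise prevent the match.
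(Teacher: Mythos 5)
The statement you are trying to prove is explicitly labeled in the paper as a conjecture (Conjecture~\ref{conjhomog}), and the author gives no proof of it. What the paper offers is computational evidence (Example~\ref{exampleA3} for $A_3$, the analogous check for $A_4$ mentioned in passing, and the explicit $D_4$ list in Section~\ref{conclu}) plus the short lemma at the very end of Section~\ref{conclu} showing that Conjecture~\ref{conjhomog} would imply the primeness criterion of Conjecture~\ref{lastconj}. There is therefore no "paper proof" to compare against, and your own closing admission that ``a genuinely new input seems necessary'' for $D_n$ ($n\geq 5$) and $E$ types is consistent with the open status of the statement.

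On the substance of your plan: the reduction to the specific candidate minor $D(w\omega_{i(w)}, \omega_{i(w)})$ is sensible and is borne out by the paper's $A_3$ data, and the remark that without strictness $S(w)$ factors as $S(w_1)\circ S(w_2)$ is exactly the observation used in the paper's closing lemma. But two steps are asserted rather than established. First, the existence and uniqueness of the descent $i(w)$ is not immediate from strictness: a strict fully-commutative element can have several right descents (e.g. $s_2s_1s_3$ in type $A_3$), so one must genuinely invoke dominant-minuscularity, and the claim that the Stembridge weight $\lambda$ can be taken to be a single fundamental weight is not argued. Second, and fundamentally, the identity $[S(w)] = D\bigl(w\omega_{i(w)}, \omega_{i(w)}\bigr)$ is precisely the content of the conjecture; the proposed route through {\cite[Proposition~10.2.4]{KKKO}} and ``rigidity of the multiplicity-free weight decomposition'' is a plausible heuristic but not an argument — you would need to actually show that the dominant word produced by the inductive crystal construction along $\mathbf{j}$ is the lex-maximal reduced word of $w$, and this is where the real difficulty lies (already in $D_4$ the flag minors need not be homogeneous, so the determinantal and strongly homogeneous families do not simply coincide). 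In short, your proposal correctly locates the crux of the problem but does not close it, and this is the expected outcome given that the paper leaves the statement as a conjecture.
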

  
   \begin{rk}
   Note that the assumption that $w$ is dominant minuscule is crucial. Indeed, consider for instance $\mathfrak{g}$ of type $D_4$ and $w := s_3s_1s_2s_4s_3$ ($3$ is the trivalent node). Then $w$ is  fully-commutative but not dominant minuscule. On the other hand, one can show that the homogeneous module $S(w)$ is not real. In particular by the results of \cite{KKKO} it is not a cluster monomial and a fortiori not a determinantal module.
   \end{rk}
  
  This conjecture would imply the following primeness criterion for strongly homogeneous modules:
  
   \begin{conj} \label{lastconj}
   The prime strongly homogeneous modules in $R-gmod$ are exactly the modules of the form $S(w) , w \in \mathcal{M}in_{0}^{+}$. 
    \end{conj}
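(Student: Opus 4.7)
The strategy is to establish the two inclusions of Conjecture~\ref{lastconj} separately, with the main effort concentrated on Conjecture~\ref{conjhomog}, to which one of the two inclusions cleanly reduces.

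For the inclusion $\{S(w) : w \in \mathcal{M}in_0^+\} \subseteq \{\text{prime strongly homogeneous modules}\}$, the plan is to invoke Conjecture~\ref{conjhomog} directly. If that conjecture holds, then for every $w \in \mathcal{M}in_0^+$ the class $[S(w)]$ is a flag minor of $\mathbb{C}[N]$, hence a cluster variable. The monoidal categorification theorem of Kang-Kashiwara-Kim-Oh ({{\cite[Theorem 11.2.2]{KKKO}}}) identifies cluster variables of $\mathbb{C}[N]$ with real prime simple objects of $R-gmod$, so primeness of $S(w)$ follows.

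For the reverse inclusion, I would show that every $w \in \mathcal{M}in^+ \setminus \mathcal{M}in_0^+$ yields a non-prime $S(w)$. Since $w$ is not strict, one can fix a reduced expression $(j_1, \ldots, j_l)$ and a position $r$ such that $j_p \cdot j_q = 0$ for all $p \leq r < q$. Writing $w = w_1 w_2$ with $w_1 := s_{j_1} \cdots s_{j_r}$ and $w_2 := s_{j_{r+1}} \cdots s_{j_l}$, both factors remain dominant minuscule by the local nature of the characterization in Theorem~\ref{Stembridge}, and their supports in $I$ are pairwise non-adjacent in the Dynkin diagram. The key step is to show $S(w) \simeq S(w_1) \circ S(w_2)$. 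Since $(\wt(S(w_1)), \wt(S(w_2))) = 0$ by the support condition, the renormalized $R$-matrix between the two factors has degree $0$ and their convolution is simple and concentrated in a single degree; moreover, every reduced expression of $w$ is a shuffle of a reduced expression of $w_1$ with one of $w_2$, so the graded character of $S(w_1) \circ S(w_2)$ (computed via Proposition~\ref{quantumshuffle}) matches that of $S(w)$, and Theorem~\ref{thmKRhom} forces an isomorphism. This exhibits $S(w)$ as a non-trivial monoidal product, hence not prime.

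The principal obstacle is thus Conjecture~\ref{conjhomog}. My plan of attack is constructive: for each $w \in \mathcal{M}in_0^+$, fix a reduced expression $(j_1, \ldots, j_l)$ and extend it to a reduced expression $\mathbf{i}$ of $w_0$, then show that the $l$-th determinantal module $M_l^{\mathbf{i}}$ of the standard seed $\mathcal{S}^{\mathbf{i}}$ coincides with $S(w)$. When $\mathbf{i}$ is of the form $\mathbf{i}_<$ coming from a total ordering on $I$, Theorem~\ref{thmcasbi2} provides the dominant word of $M_l^{\mathbf{i}}$ explicitly, which one can compare with the good-Lyndon description of the word labelling $S(w)$. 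For more general strict dominant minuscule $w$, the identification would be propagated through chains of cluster mutations as in Section~\ref{propag}, using that the homogeneity data attached to the initial flag minors is transported coherently to adjacent standard seeds. The hard technical point is that $S(w)$ is typically not cuspidal in the convex order associated to $\mathbf{i}$, so Kashiwara-Kim's results \cite{KK} cannot be invoked directly; one must track how the property of being strongly homogeneous interacts with the inductive construction of determinantal modules. Strictness of $w$ should play the pivotal role of guaranteeing that this process does not encounter a non-trivial factorization of the form $S(w) = S(w_1) \circ S(w_2)$, which in the non-strict case would break primeness as in the previous paragraph.
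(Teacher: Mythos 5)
The statement you were asked to prove is itself a conjecture (Conjecture~\ref{lastconj}) that the paper does not prove outright; what the paper actually establishes is the closing Lemma of Section~\ref{conclu}, namely that Conjecture~\ref{conjhomog} implies Conjecture~\ref{lastconj}. Your reduction follows the same route as that Lemma. The forward inclusion assumes Conjecture~\ref{conjhomog} and invokes the Kang--Kashiwara--Kim--Oh monoidal categorification to deduce primeness, exactly as the paper does. The reverse inclusion is handled by factoring $S(w)$ for non-strict $w$ into a nontrivial convolution product; the paper dispatches this in a single sentence, while you spell it out, and your details are correct: the two factors $w_1, w_2$ have disjoint, pairwise non-adjacent supports, each is again dominant minuscule by Stembridge's characterization (Theorem~\ref{Stembridge}), the renormalized $R$-matrix has degree zero, and the shuffle formula (Proposition~\ref{quantumshuffle}) together with the bijection between reduced expressions of $w$ and shuffles of those of $w_1$ and $w_2$ identifies $S(w)$ with $S(w_1) \circ S(w_2)$. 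So the reduction of Conjecture~\ref{lastconj} to Conjecture~\ref{conjhomog} in your proposal is sound and matches the paper's argument.

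The proposal then goes beyond the paper by attempting a proof of Conjecture~\ref{conjhomog}, and this is where a genuine gap remains --- the same gap the paper deliberately leaves open. Your plan is to fix a reduced expression of $w \in \mathcal{M}in_{0}^{+}$, extend it to a reduced expression $\mathbf{i}$ of $w_0$, and identify $S(w)$ with the $l$-th determinantal module of $\s^{\mathbf{i}}$. This works directly only when $\mathbf{i}$ comes from a total ordering on $I$, where Theorem~\ref{thmcasbi2} gives the dominant word explicitly; for general $w$ you propose to propagate the identification through cluster mutations, but no inductive invariant is provided that would guarantee the mutated determinantal module remains the homogeneous module $S(w)$. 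As you yourself note, $S(w)$ is typically not cuspidal for the convex order attached to $\mathbf{i}$, so the Kashiwara--Kim machinery does not apply directly, and the claim that strictness blocks a factorization along the way is a heuristic rather than a lemma. In short: the conditional implication is correct and identical to the paper's, but the sketch of a proof of Conjecture~\ref{conjhomog} does not close it, and neither the paper nor the proposal actually proves Conjecture~\ref{lastconj} unconditionally.
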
 
    
 \begin{lem}
 Conjecture~\ref{conjhomog} implies Conjecture~\ref{lastconj}.
  \end{lem}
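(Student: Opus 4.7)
The plan is to establish the two inclusions, showing that a strongly homogeneous module $S(w)$ ($w \in \mathcal{M}in^{+}$) is prime if and only if $w$ lies in $\mathcal{M}in_{0}^{+}$. The forward direction is an immediate consequence of Conjecture~\ref{conjhomog}: for $w \in \mathcal{M}in_{0}^{+}$, the class $[S(w)]$ is then a flag minor of $\mathbb{C}[N]$, hence a cluster variable, and the monoidal categorification results of Kang--Kashiwara--Kim--Oh \cite{KKKO} imply that cluster variables lift to classes of real prime simple modules in $R-gmod$. Thus $S(w)$ is prime.

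For the converse, I would start with $w \in \mathcal{M}in^{+} \setminus \mathcal{M}in_{0}^{+}$ and exhibit an explicit non-trivial factorization of $S(w)$. By Definition~\ref{defstrict}, some reduced expression $(j_1, \ldots, j_l)$ of $w$ admits an index $1 \leq r < l$ with $j_p \cdot j_q = 0$ for all $p \leq r < q$. Setting $w_1 := s_{j_1} \cdots s_{j_r}$ and $w_2 := s_{j_{r+1}} \cdots s_{j_l}$, the supports of $w_1$ and $w_2$ are orthogonal in the Dynkin diagram, and both $w_1, w_2$ inherit the criteria of Theorem~\ref{Stembridge} from $w$, hence lie in $\mathcal{M}in^{+}$. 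I claim that $S(w) \simeq S(w_1) \circ S(w_2)$. First, since the supports are disjoint, every shuffle of a reduced expression of $w_1$ with one of $w_2$ is a reduced expression of $w$, so
$$ |\text{Red}(w)| = \binom{l(w)}{l(w_1)}\, |\text{Red}(w_1)|\, |\text{Red}(w_2)|. $$
Second, the same orthogonality forces the intertwining generators $\tau$ involved in the parabolic induction defining $S(w_1) \circ S(w_2)$ to have degree $0$, so the product remains concentrated in a single degree; a dimension count using Theorem~\ref{thmKRhom} yields
$$ \dim \bigl( S(w_1) \circ S(w_2) \bigr) = \binom{l(w)}{l(w_1)} \dim S(w_1)\, \dim S(w_2) = |\text{Red}(w)| = \dim S(w). $$
Orthogonality of supports should moreover force $S(w_1) \circ S(w_2)$ to be simple (the actions of $R(\wt(w_1))$ and $R(\wt(w_2))$ decouple under the induction). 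Then, by the classification in Theorem~\ref{thmKRhom}, the homogeneous simple product $S(w_1) \circ S(w_2)$ must be $S(w_1 w_2) = S(w)$, and $S(w)$ is not prime.

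The main obstacle will be the rigorous verification that $S(w_1) \circ S(w_2)$ is simple whenever the supports of $w_1$ and $w_2$ are orthogonal; this is intuitively clear from the decoupling of the two quiver Hecke subalgebras on disjoint vertex sets, but deserves a careful argument, possibly along the lines of the head-of-product analysis already used in \cite{KRhom} or via an $R$-matrix degree computation analogous to Lemma~\ref{lemcomut}. Once this simplicity is granted, the equality $S(w) \simeq S(w_1) \circ S(w_2)$ and therefore Conjecture~\ref{lastconj} follow.
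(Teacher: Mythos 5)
Your proof takes the same route as the paper, and the detail you supply for the converse is a faithful elaboration of what the paper dismisses as ``it is easy to check.'' The simplicity of $S(w_1) \circ S(w_2)$ that you flag as the remaining obstacle is in fact standard: orthogonality of the supports gives $(\wt(S(w_1)), \wt(S(w_2))) = 0$, so the renormalized $R$-matrix $r_{S(w_1),S(w_2)}$ is a degree-zero isomorphism, $S(w_1)$ and $S(w_2)$ commute, and the product of two commuting simples one of which is real is simple by the criterion of \cite{KKKO}, closing the gap.
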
    
    
     \begin{proof}
 Conjecture~\ref{conjhomog} would imply that if $w \in \mathcal{M}in_{0}^{+}$, then $S(w)$ is prime (as it categorifies a cluster variable). But conversely, it is easy to check that if $w$ is not strict, then the module $S(w)$ can be decomposed as a convolution product of two simple modules, and thus it is not prime. Thus it would imply Conjecture~\ref{lastconj}. 
   \end{proof}

  \addcontentsline{toc}{section}{References}

 \Address

 \end{document}